\definecolor{darkgreen}{rgb}{0,0.45,0}
\definecolor{darkblue}{rgb}{0,0,.8}
\definecolor{darkred}{rgb}{0.8,0,0}
\newcommand{\A}{\mathbb{A}}
\newcommand{\N}{\mathbb{N}}
\newcommand{\R}{\mathbb{R}}
\newcommand{\OO}{\mathcal{O}}
\newcommand{\RR}{\mathcal{R}}
\newcommand{\TT}{\mathcal{T}}
\newcommand{\XX}{\mathcal{X}}
\newcommand{\II}{\mathcal{I}}
\newcommand{\enorm}[2][]{#1|\!#1|\!#1|\,#2\,#1|\!#1|\!#1|}
\newcommand{\norm}[3][]{#1\|#2#1\|_{#3}}
\newcommand{\diam}{{\rm diam}}
\newcommand{\set}[3][\big]{#1\{#2\,:\,#3#1\}}
\def\reff#1#2{\!\stackrel{\eqref{#1}}{#2}\!}
\def\UU{\mathcal U}
\def\MM{\mathcal M}
\def\refine{\operatorname{refine}}
\def\Cmark{C_{\rm mark}}
\def\Cstab{C_{\rm stb}}
\def\Crel{C_{\rm rel}}
\def\qred{q_{\rm red}}
\def\T{\mathbb{T}}
\def\qlin{q_{\rm lin}}
\def\Cdrel{C_{\rm drel}}
\def\qref{q_{\rm ref}}%
\def\nf{{\mathbf n}}
\def\Cmon{C_{\rm mon}}
\def\Copt{C_{\rm opt}}
\def\Clin{C_{\rm lin}}
\def\osc{{\rm osc}}
\def\Cinv{C_{\rm inv}}
\def\Cenorm{C_{|\!|\!|}}
\def\Cmon{C_{\rm mon}}
\newcounter{statement}
\newenvironment{statement}[2][!]{%
\vskip3mm
\hrule
\hrule
\hrule
\vskip1mm
\noindent%
\refstepcounter{statement}%
\bf#2~\thestatement%
\ifthenelse{\equal{#1}{!}}{.\ }{~(#1).\ }%
\it%
}{%
\vskip1mm
\hrule
\hrule
\hrule
\vskip2mm
}
\newenvironment{theorem}[1][!]{\begin{statement}[#1]{Theorem}}{\end{statement}}
\newenvironment{lemma}[1][!]{\begin{statement}[#1]{Lemma}}{\end{statement}}
\newenvironment{proposition}[1][!]{\begin{statement}[#1]{Proposition}}{\end{statement}}
\newenvironment{remark}[1][!]{\begin{statement}[#1]{Remark}}{\end{statement}}
\newenvironment{algorithm}[1][!]{\begin{statement}[#1]{Algorithm}}{\end{statement}}
\numberwithin{statement}{section}
\def\@seccntformat#1{\hspace*{4mm}%
  \protect\textup{\protect\@secnumfont
    \ifnum\pdfstrcmp{subsection}{#1}=0 \bfseries\fi
    \csname the#1\endcsname
    \protect\@secnumpunct
  }%
}
\def\a{\boldsymbol{\alpha}}
\def\b{\beta}
\def\h{\hslash}
\def\eps{\varepsilon}
\def\Pe{{\rm Pe}}
\def\jump#1{[\![#1]\!]}
\def\PP{\mathcal{P}}
\def\Ceff{C_{\rm eff}}%
\def\Enorm#1{\norm{#1}{}}%
\def\osc{{\rm osc}}%
\def\supp{{\rm supp}}%
\def\Cdrel{C_{\rm drl}}%
\def\Csz{C_{\rm sz}}%
\def\Ctrace{C_{\rm trace}}%
\def\SS{\mathcal{S}}%
\def\Ccls{C_{\rm cls}}%
\title{Optimal Adaptivity for the\\SUPG Finite Element Method}
\author{Christoph Erath}
\author{Dirk Praetorius}
\thanks{C. Erath (corresponding author): TU Darmstadt, Germany; Erath@mathematik.tu-darmstadt.de}
\thanks{D. Praetorius: TU Wien, Austria; Dirk.Praetorius@asc.tuwien.ac.at}
\date{\today}
\begin{document}
 
\begin{abstract}
For convection dominated problems, the \emph{streamline upwind Petrov--Galerkin method} (SUPG),
also named \emph{streamline diffusion finite element method} (SDFEM),
ensures a stable finite element solution.
Based on robust {\sl a~posteriori} error estimators, 
we propose an adaptive mesh-refining algorithm for SUPG and prove that the 
generated SUPG solutions converge at asymptotically optimal rates towards the exact solution.
\end{abstract}

\keywords{streamline upwind Petrov--Galerkin method (SUPG),
streamline diffusion finite element method (SDFEM),  
 {\sl a~posteriori} error estimate, adaptive algorithm, local mesh-refinement, 
 optimal convergence rates}

\subjclass{65N30, 65N15, 65N50, 65N12, 41A25, 76M10}
 
\maketitle

\section{Introduction}

\subsection{Model problem}
The numerical approximation of convection-dominated convec\-tion-diffusion equations
is a non trivial task. 
It is well known that the weak solution of such problems exhibits layers, and the 
standard finite element discretization (FEM) leads to oscillations in the FEM solutions,
if these layers are not resolved by the triangulation. 
The work~\cite{Augustin:2011-1} provides a competitive study   
of finite volume based (FVM) and FEM based stabilized discretizations for 
convection-dominated convection-diffusion equations.
They draw the conclusion that FVM is compulsory if solutions without spurious oscillation are needed
and local flux conservation is important.
However, if sharpness and the position of layers are important and the application can tolerate 
small spurious oscillations, the so called 
\emph{Streamline-Upwind-Petrov-Galerkin (SUPG)} method, introduced in~\cite{supg},
is a good choice. In the comparison, SUPG outperforms all other methods
in terms of quality of the approximation versus computing time. Furthermore, one can
easily add the SUPG stabilization to an existing FEM Code.

The design of \emph{a~posteriori estimators} for such problems is, in some sense, 
even more delicate, since the involved constants in the estimates 
should be \emph{robust} with respect
to the variation of the model data.
In~\cite{John:2013-1}, 
a robust residual {\sl a~posteriori} estimator, which estimates 
the error in the natural SUPG norm, is proposed. However, the upper bound relies on
some hypotheses. In~\cite{verfuerth05}, a classical residual based estimator is shown to
be fully robust in the energy norm plus a (non-computable) dual norm. It is interesting
to note that the upper bound of the residual estimator is robust with respect to the energy norm,
whereas the lower bound is only semi-robust, i.e., the constant additionally depends
on the (local) P\'eclet number. 
For more details, we refer to~\cite{verfuerth05} as well as to 
the monograph~\cite{verfuerth}.

As model problem, we consider the following stationary convection-diffusion equation
\begin{subequations}\label{eq:strongform}
\begin{align}
 -\eps\Delta u + \a\cdot\nabla u + \b u &=f\quad\text{in }\Omega,\\
 u &= 0\quad\text{on }\Gamma_D \subseteq\partial\Omega,\\
 \eps\,\frac{\partial u}{\partial\nf} &= g \quad\text{on }\Gamma_N := \partial\Omega\backslash\overline\Gamma_D,
\end{align}
\end{subequations}
where $\Omega \subset \R^d$, $d\ge 2$, is a bounded Lipschitz domain with 
polygonal boundary $\partial\Omega$, and $\nf$ is the normal vector on $\partial \Omega$
pointing outward with respect to $\Omega$. 
As in~\cite[Section~2]{tv}, we assume that the data satisfy the following conditions:
\begin{itemize}
\item $0<\eps\ll1$;
\item $\a\in W^{1,\infty}(\Omega)^d$, $\b\in L^\infty(\Omega)$, $f\in L^2(\Omega)$, $g\in L^2(\Gamma_N)$;
\item $-\frac12\nabla\cdot\a+\b \ge \gamma$ and $\norm{\b}{L^\infty(\Omega)}\le c_\b\gamma$ with some $\eps$-independent constants $c_\b,\gamma\ge0$;
\item the Dirichlet boundary $\Gamma_D$ has positive surface measure and contains the inflow boundary, 
i.e., $\set{x\in\Gamma}{\a(x)\cdot \nf(x)<0}\subseteq\Gamma_D$.
\end{itemize}
Define the bilinear form $b(\cdot,\cdot)$ and the continuous linear functional $F(\cdot)$ by
\begin{align}\label{eq:weakform0}
b(w,v) := \int_\Omega\eps\nabla w\cdot\nabla v + (\a\cdot\nabla w + \b w)\,v\,dx
\quad\text{and}\quad
F(v) := \int_\Omega fv\,dx+\int_{\Gamma_N}gv\,ds.
\end{align}
Then, the weak formulation of~\eqref{eq:strongform} reads as follows: Find $u\in H^1_D(\Omega)$ such that
\begin{align}\label{eq:weakform}
 b(u,v) = F(v)
 \quad\text{for all }v\in H^1_D(\Omega).
\end{align}
We note that the assumptions on the data guarantee that the bilinear form $b(\cdot,\cdot)$ is continuous and elliptic (see, e.g., Lemma~\ref{lemma:enorm} below). In particular, the Lax--Milgram lemma proves that~\eqref{eq:weakform} admits a unique solution $u\in H^1_D(\Omega)$. 

\subsection{Streamline upwind Petrov--Galerkin method (SUPG)}
For $\TT_\bullet$ being a regular triangulation of $\Omega$, 
let $\XX_\bullet:=\PP^p(\TT_\bullet)\cap H^1_D(\Omega)$ be the associated conforming finite element space, which consists of $\TT_\bullet$-piecewise polynomials of degree $p\ge1$. As in the continuous case, the Lax--Milgram lemma applies and proves existence and uniqueness of the Galerkin solution $u_\bullet^\star\in\XX_\bullet$ to
\begin{align}\label{eq:galerkin}
 b(u_\bullet^\star,v_\bullet) =  F(v_\bullet)
 \quad\text{for all }v_\bullet\in\XX_\bullet.
\end{align}
To circumvent instabilities of the Galerkin solution $u_\bullet^\star\in\XX_\bullet$
in case of $0<\eps\ll1$, we use the following stabilized Galerkin formulation~\cite{supg}: Find $u_\bullet\in\XX_\bullet$ such that
\begin{subequations}\label{eq:supg}
\begin{align}
b(u_\bullet,v_\bullet) + \sigma_\bullet(u_\bullet,v_\bullet) 
= F(v_\bullet)
 \quad\text{for all }v_\bullet\in\XX_\bullet,
\end{align}
where $\sigma_\bullet(\cdot,\cdot)$ denotes the stabilization term
\begin{align}
 \sigma_\bullet(u_\bullet,v_\bullet) 
 &= \sum_{T\in\TT_\bullet} \vartheta_T\,\int_T
 \big( -\eps\Delta u_\bullet + \a\cdot\nabla u_\bullet + \b u_\bullet - f\big)\,\a\cdot\nabla v_\bullet\,dx,
\end{align}
\end{subequations}
i.e., we add extra diffusion in the streamline direction.
This explains the name \emph{Streamline-Upwind-Petrov-Galerkin (SUPG)}.
The user-chosen stabilization parameter $\vartheta_T>0$ is discussed in
Section~\ref{section:modelproblem} below. We stress that $\sigma_\bullet(\cdot,\cdot)$ 
is linear in the second variable, but affine in the first. The choice
$\vartheta_T=0$ for all $T\in\TT_\bullet$ leads 
to the standard Galerkin discretization~\eqref{eq:galerkin}.

\subsection{Contributions and outline}
In this work, we study the convergence behavior of an adaptive SUPG algorithm, 
which is steered by the residual error estimator from~\cite{verfuerth05}.
Theorem~\ref{theorem:convergence} and Theorem~\ref{theorem:rates} prove that our adaptive algorithm (Algorithm~\ref{algorithm}) guarantees linear convergence with
asymptotically optimal convergence behavior 
with respect to the number of degrees of freedom.
This result is based on the following three properties of the
{\sl a~posteriori} error estimator, which are called \emph{axioms of adaptivity} in~\cite{axioms}: 
\emph{Stability and reduction} of the estimator, 
stated in Lemma~\ref{lemma:A1-A2}, follow basically from the literature. 
The proof is adapted to our model problem.
\emph{Discrete reliability} is given in Proposition~\ref{proposition:drel} and relies
on a Scott-Zhang-type projector. Furthermore, we have 
to overcome the lack of the classical Galerkin orthogonality
due to the SUPG stabilization.
We remark that our convergence analysis is done in the energy norm.
We did not succeed to handle the non-local dual norm for the 
convergence analysis of the adaptive algorithm because it relies on some local calculations. 
But since the error estimator for the robust and semi-robust
estimates are exactly the same, i.e., the non-local and non-computable dual norm 
absorbs basically the convection terms, we do not see strong restrictions on that. 
Note that the convergence analysis is in fact a statement for the estimator.

We organize the content of the paper as follows.
Section $2$ introduces some mesh quantities and gives a short summary on the most important
facts of the SUPG method.
In section $3$, we introduce the {\sl a posteriori} error estimator and prove 
\emph{stability}, \emph{reduction}, and \emph{discrete reliability}.
We state the adaptive algorithm in section $4$, prove its linear convergence and show that 
our strategy leads to optimal convergence rates.
Numerical experiments in section $5$ confirm our theoretical results. We close the paper with
some conclusions in section $6$.

\bigskip

{\bf General notation.}\quad
We use $\lesssim$ to abbreviate $\le$ up to some (generic) multiplicative constant, which is clear from the context. Moreover, $\simeq$ abbreviates that both estimates $\lesssim$ and $\gtrsim$ hold. Throughout, the mesh-dependence of (discrete) quantities is explicitly stated by use of appropriate indices, e.g., $u_\bullet\in\XX_\bullet$ is the SUPG solution of~\eqref{eq:supg} 
for the triangulation $\TT_\bullet$, and $\eta_\ell(u_\ell^\star)$ is the error estimator with respect to the triangulation $\TT_\ell$ evaluated at the Galerkin approximation $u_\ell^\star\in\XX_\ell$
of~\eqref{eq:galerkin}. 
Finally, in the case $\gamma=0$, we interpret $\gamma^{-s}=\infty$ for all $s>0$.

\section{Preliminaries}
\label{section:modelproblem}

The purpose of this section is to collect the necessary notation as well as some well-known facts on SUPG.

\subsection{Energy norm}
Our analysis employs the operator induced energy norm $\enorm\cdot$, which is an equivalent norm on $H^1_D(\Omega)$:
For $v\in H^1_D(\Omega)$, define
\begin{align}\label{eq:norm}
 \enorm{v}:=\enorm{v}_\Omega,
 \quad\text{where}\quad
 \enorm{v}_\omega := \big(\eps\,\norm{\nabla v}{L^2(\omega)}^2 + \gamma\,\norm{v}{L^2(\omega)}^2\big)^{1/2}
 \quad\text{for all }\omega\subseteq\Omega.
\end{align}
The following elementary lemma (see, e.g.,~\cite[Proposition~4.17]{verfuerth}) states that $b(\cdot,\cdot)$ is elliptic with respect to $\enorm\cdot$ with ellipticity constant $1$.

\begin{lemma}\label{lemma:enorm}
For all $v\in H^1_D(\Omega)$, it holds that $\enorm{v}^2\le b(v,v)$.\qed
\end{lemma}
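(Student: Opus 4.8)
The claim $\enorm{v}^2 \le b(v,v)$ for all $v \in H^1_D(\Omega)$ is the coercivity estimate with constant one, and the plan is a direct computation starting from the definition of $b(\cdot,\cdot)$ in~\eqref{eq:weakform0}. First I would write
\begin{align*}
 b(v,v) = \int_\Omega \eps\,|\nabla v|^2\,dx + \int_\Omega (\a\cdot\nabla v)\,v\,dx + \int_\Omega \b\,v^2\,dx.
\end{align*}
The diffusion term is already exactly $\eps\,\norm{\nabla v}{L^2(\Omega)}^2$, so the only work is to show that the two convective/reactive terms together are bounded below by $\gamma\,\norm{v}{L^2(\Omega)}^2$.

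The key step is to integrate the convective term by parts. Since $(\a\cdot\nabla v)\,v = \tfrac12\,\a\cdot\nabla(v^2)$, one obtains
\begin{align*}
 \int_\Omega (\a\cdot\nabla v)\,v\,dx = \tfrac12\int_\Omega \a\cdot\nabla(v^2)\,dx = -\tfrac12\int_\Omega (\nabla\cdot\a)\,v^2\,dx + \tfrac12\int_{\partial\Omega} (\a\cdot\nf)\,v^2\,ds.
\end{align*}
Here I would use that $v$ vanishes on $\Gamma_D$ (so the boundary term reduces to an integral over $\Gamma_N$) together with the structural assumption that the inflow boundary $\set{x\in\Gamma}{\a(x)\cdot\nf(x)<0}$ is contained in $\Gamma_D$; hence $\a\cdot\nf \ge 0$ on $\Gamma_N$ and the remaining boundary term is nonnegative. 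Combining this with the reactive term gives
\begin{align*}
 \int_\Omega (\a\cdot\nabla v)\,v\,dx + \int_\Omega \b\,v^2\,dx \ge \int_\Omega \big(-\tfrac12\,\nabla\cdot\a + \b\big)\,v^2\,dx \ge \gamma\int_\Omega v^2\,dx,
\end{align*}
where the last inequality is precisely the data assumption $-\tfrac12\,\nabla\cdot\a + \b \ge \gamma$. Adding the diffusion term yields $b(v,v) \ge \eps\,\norm{\nabla v}{L^2(\Omega)}^2 + \gamma\,\norm{v}{L^2(\Omega)}^2 = \enorm{v}^2$.

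There is no serious obstacle here; the only point requiring a little care is justifying the integration by parts and handling the boundary term, i.e., verifying that the $\Gamma_N$-contribution has the right sign. For $v \in H^1_D(\Omega)$ and $\a \in W^{1,\infty}(\Omega)^d$ the integration by parts is standard (by density of smooth functions), and the sign of the boundary term follows from the inflow assumption. Since everything else is termwise identification, the lemma follows immediately. (This is, of course, the argument referenced in~\cite[Proposition~4.17]{verfuerth}.)
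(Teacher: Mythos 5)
Your argument is correct and is exactly the standard coercivity proof that the paper invokes by citation (Lemma~\ref{lemma:enorm} is stated with reference to~\cite[Proposition~4.17]{verfuerth} rather than proved): integration by parts of the convective term, nonnegativity of the $\Gamma_N$-boundary contribution via the inflow assumption, and the condition $-\tfrac12\nabla\cdot\a+\b\ge\gamma$. Nothing to add.
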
%

\subsection{Mesh-related quantities}
Throughout, we assume that all triangulations $\TT_\bullet$ are conforming and resolve the boundary conditions $\Gamma_D$ and\ $\Gamma_N$, i.e., 
for $\Gamma_\bullet\in\{\Gamma_D,\Gamma_N\}$, it holds that $\overline\Gamma_\bullet = \bigcup\set{T\cap\Gamma}{T\cap\Gamma_\bullet\neq\emptyset}$. The elements 
$T\in\TT_\bullet$ are compact simplices with Euclidean diameter ${\rm diam}(T)$.
The triangulation $\TT_\bullet$ is $\kappa$-shape regular if 
\begin{align}\label{eq:shape_regular}
 \max_{T\in\TT_\bullet}\frac{{\rm diam}(T)}{|T|^{1/d}} \le \kappa < \infty.
\end{align}
Note that this implies equivalence $|T|^{1/d}\le{\rm diam}(T)\le \kappa\,|T|^{1/d}$ for all $T\in\TT_\bullet$. Hence, we will use the element-size
$h_T := |T|^{1/d}\simeq\diam(T)$ throughout the work.
To define robust {\sl a~posteriori} estimators, we define another mesh quantity
\begin{align}\label{eq:hrobust}
 \h_T := \min\{\eps^{-1/2}h_T\,,\,\gamma^{-1/2}\}
 \quad\text{for all $T\in\TT_\bullet$}.
\end{align}
Associated with $\TT_\bullet$, we define the local mesh-size function $h_\bullet\in L^\infty(\Omega)$ by $h_\bullet|_T:=h_T$ for all $T\in\TT_\bullet$.
Finally, we denote by $\TT_\bullet|_{\Gamma_N}$ the induced triangulation of $\Gamma_N$.

\subsection{Mesh-refinement}
We suppose a fixed mesh-refinement strategy such that $\TT_\circ := \refine(\TT_\bullet,\MM_\bullet)$ is the coarsest conforming triangulation such that all marked elements $\MM_\bullet\subseteq\TT_\bullet$ have been refined, i.e., $\MM_\bullet\subseteq\TT_\bullet\backslash\TT_\circ$. We write $\TT_\circ\in\refine(\TT_\bullet)$, if $\TT_\circ$ is obtained by finitely many steps of refinement, i.e., it holds the following: There exists $n\in\N_0$, triangulations $\TT_0,\dots,\TT_n$, and sets of marked elements $\MM_j\subseteq\TT_j$ such that $\TT_0=\TT_\bullet$, $\TT_{j+1}=\refine(\TT_j,\MM_j)$ for all $j=0,\dots,n-1$, and $\TT_\circ=\TT_n$. In particular, it holds that $\TT_\bullet\in\refine(\TT_\bullet)$.

We require the following assumptions on $\refine(\cdot)$: 
\begin{itemize}
\item
First, each element $T\in\TT_\bullet$ is the union of its successors, i.e., $T = \bigcup\set{T'\in\TT_\circ}{T'\subseteq T}$. In particular, this guarantees the pointwise estimate $h_\circ \le h_\bullet$ for the corresponding mesh-size functions. 
\item Second, we suppose that sons of refined elements are uniformly smaller than their fathers, i.e., there exists $0<\qref<1$ such that $h_{T'} \le \qref h_T$ for all $T\in\TT_\bullet\backslash\TT_\circ$ and all $T'\in\TT_\circ$ with $T'\subsetneqq T$. 
\item Finally, we suppose that all meshes $\TT_\circ\in\refine(\TT_\bullet)$ are $\kappa$-shape regular~\eqref{eq:shape_regular}, where $\kappa>0$ depends only on $\TT_\bullet$.
\end{itemize}
Possible choices for $\refine(\cdot)$ include red-green-blue refinement in 2D (see, e.g.,~\cite{verfuerth}) or newest vertex bisection in $\R^d$ for $d\ge2$ (see, e.g.,~\cite{stevenson:nvb,kpp13}). In either case, it holds that $\qref=2^{-1/d}$.

To abbreviate notation and in view of Algorithm~\ref{algorithm} below, let $\TT_0$ be a given conforming triangulation of $\Omega$. Let $\T:=\refine(\TT_0)$ 
be the set of all conforming triangulations, which can be obtained by the fixed refinement strategy.

\subsection{Well-posedness of SUPG}
In order to state the well-posedness of the SUPG formulation~\eqref{eq:supg}, we recall the following inverse estimate (see, e.g.,~\cite[Section~4.5]{brennerscott}).

\begin{lemma}\label{lemma:invest}
There exists a constant $\Cinv>0$, which depends only on $\kappa$-shape regularity of $\TT_\bullet\in\T$ and the polynomial degree $p\ge1$, such that
\begin{align}\label{eq:invest}
 h_T\,\norm{\nabla v_\bullet}{L^2(T)}\le \Cinv\,\norm{v_\bullet}{L^2(T)}
 \quad\text{for all }v_\bullet\in\XX_\bullet
 \text{ and all }T\in\TT_\bullet.
\end{align}
\vspace*{-12mm}\newline\qed\medskip
\end{lemma}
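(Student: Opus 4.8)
The plan is the standard scaling argument to a reference simplex. First I would fix the closed reference simplex $\widehat T\subset\R^d$ and invoke that, $\PP^p(\widehat T)$ being finite-dimensional, all norms on it are equivalent; in particular there is a constant $\widehat C=\widehat C(d,p)>0$ with
\begin{align*}
 \norm{\nabla\widehat v}{L^2(\widehat T)}\le\widehat C\,\norm{\widehat v}{L^2(\widehat T)}
 \qquad\text{for all }\widehat v\in\PP^p(\widehat T).
\end{align*}

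Next, for $T\in\TT_\bullet$ let $F_T(\widehat x)=B_T\widehat x+c_T$ be the affine bijection $\widehat T\to T$ with $B_T\in\R^{d\times d}$ invertible, and set $\widehat v:=v_\bullet\circ F_T$, which belongs to $\PP^p(\widehat T)$ for every $v_\bullet\in\XX_\bullet$. A change of variables gives $\norm{v_\bullet}{L^2(T)}^2=\lvert\det B_T\rvert\,\norm{\widehat v}{L^2(\widehat T)}^2$, while the chain rule $\nabla\widehat v=B_T^{\top}\big((\nabla v_\bullet)\circ F_T\big)$ together with a change of variables yields $\norm{\nabla v_\bullet}{L^2(T)}^2\le\lVert B_T^{-1}\rVert^2\,\lvert\det B_T\rvert\,\norm{\nabla\widehat v}{L^2(\widehat T)}^2$, where $\lVert\cdot\rVert$ denotes the spectral matrix norm (using that it is invariant under transposition). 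Combining these two identities with the reference estimate, the factors $\lvert\det B_T\rvert$ cancel and one obtains $\norm{\nabla v_\bullet}{L^2(T)}\le\widehat C\,\lVert B_T^{-1}\rVert\,\norm{v_\bullet}{L^2(T)}$.

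It then remains to bound $\lVert B_T^{-1}\rVert\le C/h_T$ with $C=C(\kappa,d)$. This is the only step that uses the mesh geometry, and it is the mildly delicate point — although still a textbook fact, which is exactly why the lemma is quoted from~\cite{brennerscott} rather than proved: one has $\lVert B_T^{-1}\rVert\le\diam(\widehat T)/\rho_T$, where $\rho_T$ is the diameter of the largest ball inscribed in $T$, and $\kappa$-shape regularity~\eqref{eq:shape_regular} (via the elementary estimate $\rho_T=d\,\lvert T\rvert/\lvert\partial T\rvert\gtrsim \lvert T\rvert^{1/d}=h_T$, with implied constant depending only on $\kappa$ and $d$) forces $\rho_T\simeq h_T$. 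Hence $\lVert B_T^{-1}\rVert\lesssim h_T^{-1}$, and setting $\Cinv:=\widehat C\cdot\sup_{\TT_\bullet\in\T}\max_{T\in\TT_\bullet}h_T\lVert B_T^{-1}\rVert<\infty$ — which depends only on $d$, $p$, and the $\kappa$-shape regularity of the meshes in $\T$ — establishes~\eqref{eq:invest}. I do not expect any genuine obstacle here; the entire content is the scaling argument plus this classical geometric bound.
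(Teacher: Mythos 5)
Your proof is correct and is precisely the standard scaling argument behind the cited result: the paper itself gives no proof of this lemma but simply quotes it from~\cite{brennerscott}, and your chain — boundedness of $\widehat v\mapsto\nabla\widehat v$ on the finite-dimensional space $\PP^p(\widehat T)$, the transformation identities with the cancellation of $|\det B_T|$, the bound $\lVert B_T^{-1}\rVert\le\diam(\widehat T)/\rho_T$, and $\rho_T\simeq h_T$ from $\kappa$-shape regularity~\eqref{eq:shape_regular} — is exactly that textbook proof and yields~\eqref{eq:invest} with $\Cinv$ depending only on $d$, $p$, and $\kappa$. The only blemish is cosmetic: the identity $d\,|T|/|\partial T|$ gives the \emph{inradius}, not the diameter of the inscribed ball, so your $\rho_T$ is off by a factor $2$, which is harmless since you only use it inside a $\gtrsim$ estimate.
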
%

Note that the SUPG formulation~\eqref{eq:supg} can be recast as
\begin{subequations}\label{eq':supg}
\begin{align}
 b_\bullet(u_\bullet,v_\bullet) =  F_\bullet(v_\bullet)
 \quad\text{for all }v_\bullet\in\XX_\bullet,
\end{align}
where
\begin{align}
 b_\bullet(u_\bullet,v_\bullet) &:= b(u_\bullet,v_\bullet) + \sum_{T\in\TT_\bullet} \vartheta_T\,\int_T
 \big( -\eps\Delta u_\bullet + \a\cdot\nabla u_\bullet + \b u_\bullet\big)\,\a\cdot\nabla v_\bullet\,dx,\\
 F_\bullet(v_\bullet) &:= F(v_\bullet)
 + \sum_{T\in\TT_\bullet} \vartheta_T\,\int_Tf\,\a\cdot\nabla v_\bullet\,dx.
\end{align}
\end{subequations}
For given $\TT_\bullet$, we define the discrete SUPG norm
\begin{align}\label{eq:supgnorm}
 \enorm{v_\bullet}_{\bullet,supg}^2 
 := \enorm{v_\bullet}^2 + \sum_{T\in\TT_\bullet}\vartheta_T\,\norm{\a\cdot\nabla v_\bullet}{L^2(T)}^2.
\end{align}
The following lemma (see, e.g.,~\cite[Lemma~3.25, Remark 3.29]{rst}) proves that SUPG~\eqref{eq:supg} has some enhanced stability compared to the standard Galerkin discretization~\eqref{eq:galerkin}.

\begin{lemma}\label{lemma:supg:elliptic}
Suppose that $\gamma>0$. Let $\Cinv$ be the constant from the inverse estimate of Lemma~\ref{lemma:invest}. 
Let $\TT_\bullet\in\T$.
Suppose that the SUPG stabilization parameters $\vartheta_T$ in~\eqref{eq:supg} satisfy
\begin{align}
 \label{eq:supgstabilconst}
 0 \, < \, \vartheta_T \le \begin{cases}
  \displaystyle\frac12\,\min\Big\{\frac{h_T^2}{\eps\Cinv^2}\,,\,\frac{\gamma}{\norm{\b}{L^\infty(T)}^2}\Big\}
 \quad&\text{for }p\ge2,\\
 \displaystyle\,\frac{\gamma}{\norm{\b}{L^\infty(T)}^2}\quad&\text{for }p=1,
 \end{cases}
\end{align}
for all $T\in\TT_\bullet$.
Then the SUPG bilinear form $b_\bullet(\cdot,\cdot)$ satisfies that
\begin{align}
 \label{eq:supgstabil}
 \frac12\,\enorm{v_\bullet}_{\bullet,supg}^2 \le b_\bullet(v_\bullet,v_\bullet)
 \quad\text{for all $v_\bullet\in\XX_\bullet$.}
\end{align}
In particular, the Lax--Milgram lemma proves that the SUPG formulation~\eqref{eq:supg} admits a unique solution $u_\bullet\in\XX_\bullet$.
\qed
\end{lemma}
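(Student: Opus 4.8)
The plan is to test the SUPG bilinear form with the argument $v_\bullet$ itself and to absorb the non-definite parts of the stabilization into the coercivity of $b(\cdot,\cdot)$ provided by Lemma~\ref{lemma:enorm}. Using the representation~\eqref{eq':supg} and splitting the stabilization sum into its three natural parts (convection--convection, diffusion--convection, reaction--convection), one obtains the identity
\begin{align*}
 b_\bullet(v_\bullet,v_\bullet)
 &= b(v_\bullet,v_\bullet)
 + \sum_{T\in\TT_\bullet}\vartheta_T\,\norm{\a\cdot\nabla v_\bullet}{L^2(T)}^2\\
 &\qquad
 - \sum_{T\in\TT_\bullet}\vartheta_T\,\eps\int_T \Delta v_\bullet\,(\a\cdot\nabla v_\bullet)\,dx
 + \sum_{T\in\TT_\bullet}\vartheta_T\int_T \b\,v_\bullet\,(\a\cdot\nabla v_\bullet)\,dx .
\end{align*}
By Lemma~\ref{lemma:enorm} we have $b(v_\bullet,v_\bullet)\ge\enorm{v_\bullet}^2$, and the first sum is precisely the extra term appearing in the discrete SUPG norm~\eqref{eq:supgnorm}. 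It therefore remains to control the last two sums, where we may afford to spend at most one half of $\enorm{v_\bullet}^2=\eps\norm{\nabla v_\bullet}{L^2(\Omega)}^2+\gamma\norm{v_\bullet}{L^2(\Omega)}^2$ and at most one half of $\sum_{T\in\TT_\bullet}\vartheta_T\norm{\a\cdot\nabla v_\bullet}{L^2(T)}^2$.

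For the reaction--convection sum I would estimate the integrand by $\norm{\b}{L^\infty(T)}\,\norm{v_\bullet}{L^2(T)}\,\norm{\a\cdot\nabla v_\bullet}{L^2(T)}$ and apply Young's inequality, choosing (for $p\ge2$) the splitting so that the factor $\norm{\a\cdot\nabla v_\bullet}{L^2(T)}^2$ carries the weight $\tfrac14\vartheta_T$; the companion factor $\norm{v_\bullet}{L^2(T)}^2$ then picks up the constant $\vartheta_T\norm{\b}{L^\infty(T)}^2$, which by the assumed bound $\vartheta_T\le\tfrac12\gamma\norm{\b}{L^\infty(T)}^{-2}$ is $\le\tfrac12\gamma$. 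For the diffusion--convection sum, which is absent when $p=1$ since $\Delta v_\bullet$ vanishes on each simplex, I would use the inverse estimate of Lemma~\ref{lemma:invest} (applied to the partial derivatives of $v_\bullet$) to bound $\norm{\Delta v_\bullet}{L^2(T)}\le\Cinv h_T^{-1}\norm{\nabla v_\bullet}{L^2(T)}$ and then again apply Young's inequality with weight $\tfrac14\vartheta_T$ on $\norm{\a\cdot\nabla v_\bullet}{L^2(T)}^2$; the companion factor $\norm{\nabla v_\bullet}{L^2(T)}^2$ then carries $\vartheta_T\eps^2\Cinv^2 h_T^{-2}$, which is $\le\tfrac12\eps$ exactly by the assumed bound $\vartheta_T\le\tfrac12 h_T^2\eps^{-1}\Cinv^{-2}$. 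Summing over $T\in\TT_\bullet$ and collecting terms gives $b_\bullet(v_\bullet,v_\bullet)\ge\tfrac12\enorm{v_\bullet}^2+\tfrac12\sum_{T\in\TT_\bullet}\vartheta_T\norm{\a\cdot\nabla v_\bullet}{L^2(T)}^2=\tfrac12\enorm{v_\bullet}_{\bullet,supg}^2$. For $p=1$ only the reaction--convection sum is present, and the same conclusion follows from the analogous Young splitting with weight $\tfrac12\vartheta_T$ on $\norm{\a\cdot\nabla v_\bullet}{L^2(T)}^2$ together with the (now weaker) bound $\vartheta_T\le\gamma\norm{\b}{L^\infty(T)}^{-2}$.

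Once~\eqref{eq:supgstabil} is established, the final assertion is immediate: $b_\bullet(\cdot,\cdot)$ is a bounded bilinear form on the finite-dimensional space $\XX_\bullet$, and since $\gamma>0$ the map $\enorm{\cdot}_{\bullet,supg}$ is a norm on $\XX_\bullet$, so the Lax--Milgram lemma yields existence and uniqueness of $u_\bullet$. I expect no genuine difficulty here; the only delicate point is the bookkeeping in Young's inequality --- keeping track of which half of $\enorm{v_\bullet}^2$ and of the convective stabilization term each cross term is charged to --- and the fact that the cases $p=1$ and $p\ge2$ require slightly different splittings, matching the two regimes of~\eqref{eq:supgstabilconst}.
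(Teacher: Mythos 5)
Your proof is correct and is essentially the standard coercivity argument which the paper does not reproduce but cites from~\cite[Lemma~3.25, Remark~3.29]{rst}: test with $v_\bullet$, use Lemma~\ref{lemma:enorm} for $b(v_\bullet,v_\bullet)\ge\enorm{v_\bullet}^2$, bound the diffusion--convection and reaction--convection cross terms via the elementwise inverse estimate and Young's inequality with exactly the weights dictated by~\eqref{eq:supgstabilconst} (the $p=1$ case needing no inverse estimate since $\Delta v_\bullet|_T=0$). The only cosmetic caveat is that obtaining $\norm{\Delta v_\bullet}{L^2(T)}\le \Cinv h_T^{-1}\norm{\nabla v_\bullet}{L^2(T)}$ by applying~\eqref{eq:invest} componentwise to $\nabla v_\bullet$ can pick up a dimension-dependent factor, but this is precisely the convention the paper itself adopts in naming $\Cinv$ in~\eqref{eq:supgstabilconst}, so your bookkeeping matches the intended statement.
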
 

\begin{remark}
 \label{rem:apriori}
 Lemma~\ref{lemma:supg:elliptic} is formulated for $\gamma>0$. 
 If $\gamma=0$ (and hence $\b=0$ by virtue of the assumption $\norm{\b}{L^\infty(\Omega)}\le c_\b\gamma$), 
 suppose in addition that $-\frac12\nabla\cdot\a =0$. Provided that
 \begin{align}\label{eq':supgstabilconst}
 \begin{cases}
 0\,<\, \vartheta_T \le \displaystyle\frac12\,\frac{h_T^2}{\eps\Cinv^2}\quad&\text{for $p\ge2$},\\
 0\,<\,\vartheta_T<\infty&\text{for $p=1$},
 \end{cases}
 \end{align}
 for all $T\in\TT_\bullet$, 
 the proof of~\cite[Lemma~3.25]{rst} remains valid and verifies~\eqref{eq:supgstabil}. 
 In particular, the SUPG formulation~\eqref{eq:supg} admits a unique solution $u_\bullet\in\XX_\bullet$.
 \end{remark}
 
In order to get an optimal {\sl a~priori} estimate for the SUPG error on uniform meshes~\cite[Theorem~3.27 and eq.~(3.38)]{rst}, the stability parameter $\vartheta_T$ is usually chosen as
\begin{align}
 \label{eq:supgstabilconstopt}
 \vartheta_T &:= \begin{cases}
  \delta_0 \, h_T\quad&\text{if }\Pe_T > 1\quad\text{(convection-dominated case)},\\
  \delta_1 h_T^2/\eps\quad&\text{if }\Pe_T \le 1\quad\text{(diffusion-dominated case)}.
 \end{cases}
\end{align}
This also holds if $\gamma=0$.
Here, $\delta_0,\delta_1>0$ are given constants and $\Pe_T$ denotes the local P\'eclet number
\begin{align}
 \label{eq:localpeclet}
 \Pe_T := \frac{\norm{\a}{L^\infty(T)}h_T}{2\eps}.
\end{align}


\subsection{Choice of SUPG stabilization parameter}
In this work, we choose $\vartheta_T$ by~\eqref{eq:supgstabilconstopt}
and suppose that the choice of $\delta_0,\delta_1>0$ 
guarantees~\eqref{eq:supgstabilconst} or~\eqref{eq':supgstabilconst} 
to ensure  existence and uniqueness of the SUPG approximation $u_\bullet\in\XX_\bullet$.
Associated with the triangulation $\TT_\bullet$ and in analogy to $h_\bullet\in L^\infty(\Omega)$, 
we define $\vartheta_\bullet\in L^\infty(\Omega)$ by $\vartheta_\bullet|_T := \vartheta_T$.

\section{A~posteriori error estimation}
\label{section:aposteriori}

\subsection{(Semi-) Robust \textsl{a~posteriori} error control}
Our model problem~\eqref{eq:strongform} allows, in particular, the case of dominating convection or reaction. Therefore, \emph{robust} {\sl a~posteriori} estimates are preferred, i.e., the reliability as well as the efficiency constant 
do not depend on the variation of the model data $\eps$, $\a$, and $\b$, but may depend on $\gamma$ and $c_\beta$. 

In the following, we recall the residual {\sl a~posteriori} error estimator from~\cite{verfuerth05,tv}. 
For $\TT_\bullet\in\T$ and all discrete functions $w_\bullet\in\XX_\bullet$, we define
\begin{subequations}\label{eq:eta}
\begin{align}
 \eta_\bullet(w_\bullet) := \eta_\bullet(\TT_\bullet,w_\bullet)
 \quad\text{with}\quad
 \eta_\bullet(\UU_\bullet,w_\bullet)
 = \Big(\sum_{T\in\UU_\bullet}\eta_\bullet(T,w_\bullet)^2\Big)^{1/2}
 \quad\text{for all }\UU_\bullet\subseteq\TT_\bullet,
\end{align}
where the local contributions read
\begin{align}
 \label{eq:localcontributions}
 \begin{split}
 \eta_\bullet(T,w_\bullet)^2 := \,&\h_T^2\,\norm{-\eps\Delta w_\bullet + \a\cdot\nabla w_\bullet + \b w_\bullet - f}{L^2(T)}^2 
 \\&
 + \h_T\,\eps^{-1/2}\,\norm{\jump{\eps\nabla w_\bullet}}{L^2(\partial T\cap\Omega)}^2 
 + \h_T\,\eps^{-1/2}\,\norm{g-\eps\partial w_\bullet/\partial\nf}{L^2(\partial T\cap\Gamma_N)}^2.
 \end{split}
\end{align}
\end{subequations}
Here, the normal jump reads
$\jump{\mathbf{g}}|_E:=(\mathbf{g}|_T-\mathbf{g}|_{T'})\cdot\nf$,
where $\mathbf{g}|_T$ denotes the trace of $\mathbf{g}$ from $T$ onto $E$ and 
$\nf$ points from $T$ to $T'$ with $E=T\cap T'$.

We note that (up to the scaling $h_T\simeq \h_T$), the error estimator 
for the SUPG finite element method~\eqref{eq:supg} is the same as for the standard finite element discretization~\eqref{eq:galerkin} with $w_\bullet = u_\bullet$ and $w_\bullet = u_\bullet^\star$, respectively. 
Moreover, in either case, there holds reliability
\begin{align}\label{eq:reliable}
 \enorm{u-u_\bullet} \le \Crel\,\eta_\bullet(u_\bullet)
 \quad\text{and}\quad
 \enorm{u-u_\bullet^\star} \le \Crel^\star\,\eta_\bullet(u_\bullet^\star).
\end{align}
We note that the constants $\Crel,\Crel^\star>0$ are \emph{robust}. For the standard Galerkin formulation~\eqref{eq:galerkin}, details are found in the monograph~\cite{verfuerth}. For SUPG~\eqref{eq:supg}, we refer to~\cite{verfuerth05} and to the recent work~\cite{tv}. 

To formulate the lower bound, let $\a_\bullet\in\PP^{p-1}(\TT_\bullet)^d$, $\b_\bullet,f_\bullet\in\PP^{p-1}(\TT_\bullet)$, and $g_\bullet\in\PP^{p-1}(\TT_\bullet|_{\Gamma_N})$ be polynomial approximations of the given data. Define the data oscillations by
\begin{subequations}\label{eq:osc}
\begin{align}
 \osc_\bullet(w_\bullet) :=  \Big(\sum_{T\in\TT_\bullet}\osc_\bullet(T,w_\bullet)^2\Big)^{1/2}
 \text{ for all }w_\bullet\in\XX_\bullet,
\end{align}
where the local contributions read
\begin{align}
 \label{eq:osc:localcontributions}
 \begin{split}
 \osc_\bullet(T,w_\bullet)^2 := &\h_T^2\,\norm{(\a-\a_\bullet)\cdot\nabla w_\bullet + (\b-\b_\bullet) w_\bullet - (f-f_\bullet)}{L^2(T)}^2 
 \\&
 + \h_T\,\eps^{-1/2}\,\norm{g-g_\bullet}{L^2(\partial T\cap\Gamma_N)}^2.
 \end{split}
\end{align}
\end{subequations}%
Note that the efficiency
\begin{align}\label{eq:efficient}
  \eta_\bullet(u_\bullet) \le \Ceff\,\big(\enorm{u-u_\bullet} + \osc_\bullet(u_\bullet)\big)
 \quad\text{and}\quad
  \eta_\bullet(u_\bullet^\star) \le \Ceff^\star\,\big(\enorm{u-u_\bullet^\star} + \osc_\bullet(u_\bullet^\star)\big)
\end{align}%
is in general not robust with respect to the energy norm~\eqref{eq:norm}, 
and $\Ceff,\Ceff^\star>0$ depend additionally on the P\'eclet number, i.e., the estimate
is semi-robust.
However, the lower \emph{and} upper {\sl a~posteriori} bound become robust 
by extending the energy norm in~\eqref{eq:reliable} and~\eqref{eq:efficient}, i.e., $\enorm\cdot$ is replaced by
\begin{align}\label{eq':enorm}
 \Enorm{v}^2 := \enorm{v}^2 + \enorm{\a\cdot\nabla v}_*^2
 \quad\text{with}\quad
 \enorm{\a\cdot\nabla v}_* := \sup_{w\in H^1_D(\Omega)\backslash\{0\}}\frac{\int_\Omega\a\cdot\nabla v\,w\,dx}{\enorm{w}};
\end{align}%
see, e.g.,~\cite[Theorem 2.8 and Remark 2.9]{tv}. 
\begin{remark}
 \label{rem:robustness}
 The local contributions~\eqref{eq:localcontributions} and therefore 
 the estimator~\eqref{eq:eta} are the same for the semi-robust 
 and the robust estimate. Consequently, the semi-robustness or the robustness
 do not affect the output (meshes) of
 the adaptive Algorithm~\ref{algorithm} (below). 
\end{remark}

\subsection{\textsl{A~posteriori} control of SUPG stabilization}
The following lemmas are some key ingredients of~\cite{tv} to prove robust reliability~\eqref{eq:reliable} 
for SUPG~\eqref{eq:supg} in the extended energy norm~\eqref{eq':enorm}. We will use them in a different
context below.

\begin{lemma}\label{lemma':invest}
Let $\TT_\bullet\in\T$.
With the constant $\Cinv$ from~\eqref{eq:invest}, it holds that
\begin{align}\label{eq':hT}
 \frac{h_T}{\h_T}\,\norm{\nabla v_\bullet}{L^2(T)} \le \Cenorm\,\enorm{v_\bullet}_T
 \quad\text{for all }v_\bullet\in\XX_\bullet
 \text{ and all }T\in\TT_\bullet, 
\end{align}
where $\Cenorm=1$ if $\eps^{-1/2}h_T\le \gamma^{-1/2}$ and $\Cenorm=\Cinv$ otherwise.
\end{lemma}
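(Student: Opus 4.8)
The plan is to distinguish the two cases in the definition~\eqref{eq:hrobust} of $\h_T=\min\{\eps^{-1/2}h_T,\gamma^{-1/2}\}$ and, in each case, to reduce the claimed bound either to the trivial estimate $\eps^{1/2}\norm{\nabla v_\bullet}{L^2(T)}\le\enorm{v_\bullet}_T$ or to the inverse estimate of Lemma~\ref{lemma:invest}.

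First, suppose $\eps^{-1/2}h_T\le\gamma^{-1/2}$. Then $\h_T=\eps^{-1/2}h_T$, hence $h_T/\h_T=\eps^{1/2}$, and therefore
\begin{align*}
 \frac{h_T}{\h_T}\,\norm{\nabla v_\bullet}{L^2(T)}
 = \eps^{1/2}\,\norm{\nabla v_\bullet}{L^2(T)}
 \le \big(\eps\,\norm{\nabla v_\bullet}{L^2(T)}^2+\gamma\,\norm{v_\bullet}{L^2(T)}^2\big)^{1/2}
 = \enorm{v_\bullet}_T,
\end{align*}
which is the asserted inequality with $\Cenorm=1$ (here one only uses the definition~\eqref{eq:norm} of the local energy norm and that $\gamma\ge0$).

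Second, suppose $\eps^{-1/2}h_T>\gamma^{-1/2}$, so that $\h_T=\gamma^{-1/2}$ (in particular $\gamma>0$, so this quantity is finite) and $h_T/\h_T=\gamma^{1/2}h_T$. Now I would invoke the inverse estimate~\eqref{eq:invest}, which gives $h_T\,\norm{\nabla v_\bullet}{L^2(T)}\le\Cinv\,\norm{v_\bullet}{L^2(T)}$, and combine it with $\gamma^{1/2}\norm{v_\bullet}{L^2(T)}\le\enorm{v_\bullet}_T$:
\begin{align*}
 \frac{h_T}{\h_T}\,\norm{\nabla v_\bullet}{L^2(T)}
 = \gamma^{1/2}\,h_T\,\norm{\nabla v_\bullet}{L^2(T)}
 \le \Cinv\,\gamma^{1/2}\,\norm{v_\bullet}{L^2(T)}
 \le \Cinv\,\enorm{v_\bullet}_T,
\end{align*}
which is the asserted inequality with $\Cenorm=\Cinv$. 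This exhausts both cases and completes the proof.

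There is no real obstacle here: the statement is an elementary case distinction, and the only nontrivial input is the standard local inverse estimate of Lemma~\ref{lemma:invest}, which is already available. The one point to keep an eye on is the degenerate situation $\gamma=0$, but then $\eps^{-1/2}h_T\le\gamma^{-1/2}=\infty$ always holds, so only the first case occurs and $\Cenorm=1$; this is consistent with the convention $\gamma^{-s}=\infty$ fixed in the general notation.
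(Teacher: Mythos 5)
Your proposal is correct and follows essentially the same argument as the paper: the same case distinction based on whether $\h_T=\eps^{-1/2}h_T$ or $\h_T=\gamma^{-1/2}$, using the definition~\eqref{eq:norm} of the energy norm in the first case and the inverse estimate~\eqref{eq:invest} in the second. The extra observation about $\gamma=0$ forcing the first case is a sensible (and consistent) addition, though not needed by the paper's proof.
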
%

\begin{proof}
If $\eps^{-1/2}h_T\le \gamma^{-1/2}$, it follows that $\h_T=\eps^{-1/2}h_T$ and hence that
\begin{align*}
 \frac{h_T}{\h_T}\,\norm{\nabla v_\bullet}{L^2(T)} \reff{eq:hrobust}= \eps^{1/2}\,\norm{\nabla v_\bullet}{L^2(T)} \reff{eq:norm}\le \enorm{v_\bullet}_T.
\end{align*}
If $\eps^{-1/2}h_T > \gamma^{-1/2}$, it follows that $\h_T = \gamma^{-1/2}$ and hence that
\begin{align*}
 \frac{h_T}{\h_T}\,\norm{\nabla v_\bullet}{L^2(T)} \reff{eq:hrobust}= \gamma^{1/2}\,h_T\,\norm{\nabla v_\bullet}{L^2(T)} 
 \reff{eq:invest}\le\Cinv\gamma^{1/2}\norm{v_\bullet}{L^2(T)}
 \reff{eq:norm}\le \Cinv\,\enorm{v_\bullet}_T.
\end{align*}
This concludes the proof.
\end{proof}%

\begin{lemma}\label{lemma:sigma-vs-eta}
Recall the constant $\Cenorm$ from Lemma~\ref{lemma':invest}.
For all $\TT_\bullet\in\T$, it holds that
\begin{align}\label{eq:sigma-vs-eta}
 |\sigma_\bullet(w_\bullet,v_\bullet)|
 \le \Cenorm\,\norm{|\a|\,\vartheta_\bullet/h_\bullet}{L^\infty(\bigcup\UU_\bullet)}\,  \eta_\bullet(\UU_\bullet, w_\bullet)\,\enorm{v_\bullet}_{\bigcup\UU_\bullet}
 \quad\text{for all $v_\bullet,w_\bullet\in\XX_\bullet$},
\end{align}
where $\UU_\bullet\subseteq\TT_\bullet$ satisfies 
$\supp(v_\bullet)\subseteq\bigcup\UU_\bullet$.
Moreover, it holds that
\begin{align}\label{aux:constant}
 \norm{|\a|\,\vartheta_\bullet/h_\bullet}{L^\infty(\Omega)} \le \max\{\delta_0\,\norm{\a}{L^\infty(\Omega)}\,,\,2\delta_1\}.
\end{align}
\end{lemma}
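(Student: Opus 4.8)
The plan is to bound $|\sigma_\bullet(w_\bullet,v_\bullet)|$ directly from its definition in~\eqref{eq:supg}, summand by summand over the triangulation, exploiting that the element residual $-\eps\Delta w_\bullet+\a\cdot\nabla w_\bullet+\b w_\bullet-f$ occurring in $\sigma_\bullet(\cdot,\cdot)$ is exactly the quantity whose $\h_T$-weighted $L^2(T)$-norm constitutes the first (volume) contribution to $\eta_\bullet(T,w_\bullet)^2$ in~\eqref{eq:localcontributions}.

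First I would reduce the sum to $\UU_\bullet$: since $v_\bullet\in\XX_\bullet$ is continuous and $\TT_\bullet$-piecewise polynomial with $\supp(v_\bullet)\subseteq\bigcup\UU_\bullet$, and distinct elements of the conforming triangulation $\TT_\bullet$ have disjoint interiors, $v_\bullet$ and hence $\nabla v_\bullet$ vanish on the interior of every $T\in\TT_\bullet\backslash\UU_\bullet$, so only the summands with $T\in\UU_\bullet$ survive in $\sigma_\bullet(w_\bullet,v_\bullet)$. On each such $T$, the Cauchy--Schwarz inequality together with $|\a\cdot\nabla v_\bullet|\le|\a|\,|\nabla v_\bullet|$ bounds the corresponding summand by $\vartheta_T\,\norm{|\a|}{L^\infty(T)}\,\norm{-\eps\Delta w_\bullet+\a\cdot\nabla w_\bullet+\b w_\bullet-f}{L^2(T)}\,\norm{\nabla v_\bullet}{L^2(T)}$. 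Here I would use~\eqref{eq:localcontributions} to replace the residual norm by $\h_T^{-1}\eta_\bullet(T,w_\bullet)$, and Lemma~\ref{lemma':invest} to replace $\h_T^{-1}\norm{\nabla v_\bullet}{L^2(T)}$ by $\Cenorm\,h_T^{-1}\enorm{v_\bullet}_T$. Since $\vartheta_T$ and $h_T$ are constant on $T$, the leftover scalar factor $\vartheta_T\,\norm{|\a|}{L^\infty(T)}/h_T$ equals $\norm{|\a|\,\vartheta_\bullet/h_\bullet}{L^\infty(T)}\le\norm{|\a|\,\vartheta_\bullet/h_\bullet}{L^\infty(\bigcup\UU_\bullet)}$. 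Summing the resulting bound $\Cenorm\,\norm{|\a|\,\vartheta_\bullet/h_\bullet}{L^\infty(\bigcup\UU_\bullet)}\,\eta_\bullet(T,w_\bullet)\,\enorm{v_\bullet}_T$ over $T\in\UU_\bullet$ and invoking the discrete Cauchy--Schwarz inequality — together with $\sum_{T\in\UU_\bullet}\eta_\bullet(T,w_\bullet)^2=\eta_\bullet(\UU_\bullet,w_\bullet)^2$ and $\sum_{T\in\UU_\bullet}\enorm{v_\bullet}_T^2=\enorm{v_\bullet}_{\bigcup\UU_\bullet}^2$ (additivity over the pairwise disjoint $T\in\UU_\bullet$) — then gives~\eqref{eq:sigma-vs-eta}.

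For the bound~\eqref{aux:constant} on the constant, I would simply plug in the choice~\eqref{eq:supgstabilconstopt} of $\vartheta_T$. On elements with $\Pe_T>1$ one has $\vartheta_T/h_T=\delta_0$, hence $|\a|\,\vartheta_T/h_T\le\delta_0\,\norm{\a}{L^\infty(\Omega)}$ a.e.\ on $T$. On elements with $\Pe_T\le1$ one has $\vartheta_T/h_T=\delta_1 h_T/\eps$, while by the definition~\eqref{eq:localpeclet} of $\Pe_T$ the inequality $\Pe_T\le1$ reads $\norm{\a}{L^\infty(T)}\,h_T/\eps\le2$, so that $|\a|\,\vartheta_T/h_T\le\norm{\a}{L^\infty(T)}\,\delta_1 h_T/\eps\le2\delta_1$ a.e.\ on $T$ (the degenerate case $\norm{\a}{L^\infty(T)}=0$ is trivial since then $|\a|=0$ a.e.\ on $T$). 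Taking the essential supremum over all $T\in\TT_\bullet$ yields~\eqref{aux:constant}. I expect no genuine obstacle in this lemma; the only point needing care is the bookkeeping between the \emph{robust} mesh-size $\h_T$ that appears in the estimator and the \emph{standard} mesh-size $h_T$ that appears in the stabilization parameter $\vartheta_T$, which is precisely what Lemma~\ref{lemma':invest} is designed to bridge and which is the source of the factor $\Cenorm$.
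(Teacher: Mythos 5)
Your proposal is correct and follows essentially the same route as the paper: elementwise Cauchy--Schwarz on the stabilization term, absorbing the residual into the local estimator contribution $\h_T^{-1}\eta_\bullet(T,w_\bullet)$, using Lemma~\ref{lemma':invest} to convert $\frac{h_T}{\h_T}\norm{\nabla v_\bullet}{L^2(T)}$ into $\Cenorm\enorm{v_\bullet}_T$, and concluding by discrete Cauchy--Schwarz over $\UU_\bullet$, with the constant bound obtained exactly as in the paper by the case distinction $\Pe_T>1$ versus $\Pe_T\le1$ in~\eqref{eq:supgstabilconstopt}. No gaps.
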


\begin{proof}
For $T\in\TT_\bullet$, the definition of the residual error estimator $\eta_\bullet(w_\bullet)$ in~\eqref{eq:eta} gives
\begin{align*}
 \vartheta_T\,\int_T
 \big( -\eps\Delta w_\bullet + \a\cdot\nabla w_\bullet + \b w_\bullet - f\big)\,\a\cdot\nabla v_\bullet\,dx
 \le \frac{\vartheta_T}{\h_T}\,\eta_\bullet(T,w_\bullet)\,\norm{\a}{L^\infty(T)}\,\norm{\nabla v_\bullet}{L^2(T)}.
\end{align*}
Note that the left-hand side vanishes if $T\not\in\UU_\bullet$.
Hence, the stabilization $\sigma_\bullet(w_\bullet,v_\bullet)$ from~\eqref{eq:supg} is bounded by
\begin{align*}
 \sigma_\bullet(w_\bullet,v_\bullet)
 &\le \norm{|\a|\,\vartheta_\bullet/h_\bullet}{L^\infty(\bigcup\UU_\bullet)} \,\sum_{T\in\UU_\bullet}\eta_\bullet(T,w_\bullet)\,\frac{h_T}{\h_T}\,\norm{\nabla v_\bullet}{L^2(T)}
 \\&
 \reff{eq':hT}\le\Cenorm\,\norm{|\a|\,\vartheta_\bullet/h_\bullet}{L^\infty(\bigcup\UU_\bullet)}\,
 \eta_\bullet(\UU_\bullet,w_\bullet)\,\enorm{v_\bullet}_{\bigcup\UU_\bullet}.
\end{align*}
This proves~\eqref{eq:sigma-vs-eta}. To see the boundedness of $\norm{|\a|\,\vartheta_\bullet/h_\bullet}{L^\infty(\Omega)}$, let $T\in\TT_\bullet$. Recall the choice~\eqref{eq:supgstabilconstopt} of $\vartheta_T$.
If $\Pe_T>1$, it holds that $$\norm{\a}{L^\infty(T)}\,\vartheta_T/h_T = \delta_0\,\norm{\a}{L^\infty(T)}.$$ If $\Pe_T\le1$, it holds that $$\norm{\a}{L^\infty(T)}\,\vartheta_T/h_T = \delta_1\,\norm{\a}{L^\infty(T)}\,h_T/\eps = 2\delta_1\,\Pe_T\le2\delta_1.$$ Hence, we conclude the proof.
\end{proof} 

\subsection{Stability and reduction of \textsl{a~posteriori} error estimator}
Following the lines of~\cite[Section~3.1]{ckns}, 
the residual error estimators satisfies \emph{stability on non-refined element domains}~\eqref{axiom:stability} 
as well as \emph{reduction on refined element domains}~\eqref{axiom:reduction}. 
For convenience of the reader, we include the proof adapted to our model problem and error estimator.

\begin{lemma}\label{lemma:A1-A2}
There exist constants $\Cstab>0$ and $0<\qred<1$ such that for all triangulations $\TT_\bullet\in\T$ and $\TT_\circ\in\refine(\TT_\bullet)$ as well as arbitrary discrete functions $v_\bullet\in\XX_\bullet$ and $v_\circ\in\XX_\circ$, the following estimates hold:
\begin{align}\label{axiom:stability}
|\eta_\circ(\UU_\bullet,v_\circ)-\eta_\bullet(\UU_\bullet,v_\bullet)| \le \Cstab\,\enorm{v_\bullet-v_\circ} \quad\text{for all }\,\UU_\bullet\subseteq\TT_\bullet\cap\TT_\circ
\end{align}
as well as
\begin{align}\label{axiom:reduction0}
 \eta_\circ(\UU_\circ,v_\bullet)
 \le \eta_\bullet(\UU_\bullet,v_\bullet)
 \quad\text{for all }\,\UU_\circ\subseteq\TT_\circ
 \text{ and }\, \UU_\bullet := \set{T\in\TT_\bullet}{\exists T'\in\UU_\circ\quad T'\subseteq T}.
\end{align}
Moreover, if $\norm{h_\bullet}{L^\infty(\Omega)}\le (\eps/\gamma)^{1/2}$, it even holds that
\begin{align}\label{axiom:reduction}
 \eta_\circ(\TT_\circ\backslash\TT_\bullet,v_\bullet)
 \le \qred \,\eta_\bullet(\TT_\bullet\backslash\TT_\circ,v_\bullet).
\end{align}
While $\qred=\qref^{1/2}$ depends only on the mesh-refinement (and $\qred = 2^{-1/(2d)}$ for newest vertex bisection), $\Cstab$ depends on shape regularity of $\TT_\circ$, $c_\b$, $\Cinv$, and on the local P\'eclet numbers $\Pe_T$ for all $T\in\TT_\circ$.
\end{lemma}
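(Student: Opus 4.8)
The plan is to prove the three estimates separately, using the triangle inequality to split the estimator into volume residual, interior jump, and Neumann jump contributions, and then to control each piece. First I would establish \eqref{axiom:stability}. For the volume residual term one writes the difference of the norms $\h_T\,\norm{-\eps\Delta v_\circ + \a\cdot\nabla v_\circ+\b v_\circ - f}{L^2(T)}$ and $\h_T\,\norm{-\eps\Delta v_\bullet+\a\cdot\nabla v_\bullet+\b v_\bullet - f}{L^2(T)}$; the inverse triangle inequality turns this into $\h_T\,\norm{-\eps\Delta(v_\bullet - v_\circ) + \a\cdot\nabla(v_\bullet - v_\circ) + \b(v_\bullet - v_\circ)}{L^2(T)}$, and one estimates the three summands using $\h_T\le\eps^{-1/2}h_T$, the inverse estimate \eqref{eq:invest} for the Laplacian term (costing a factor $\Cinv$ and a local Péclet factor since $\h_T\eps^{-1/2}h_T^{-1}\cdot\eps = \h_T\eps^{1/2}h_T^{-1}\cdot\eps^{1/2}h_T\le \eps^{1/2}h_T \simeq \Pe_T^{?}$ — this is exactly where the Péclet dependence enters), $\h_T\,\norm{\a}{L^\infty}\le\eps^{-1/2}h_T\norm{\a}{L^\infty}\simeq\Pe_T\cdot2\eps^{1/2}$ for the convection term so that $\h_T\,\norm{\a\cdot\nabla(v_\bullet - v_\circ)}{L^2(T)}\lesssim\Pe_T\,\eps^{1/2}\norm{\nabla(v_\bullet - v_\circ)}{L^2(T)}$, and $\h_T\,\norm{\b}{L^\infty}\le\h_T c_\b\gamma\le c_\b\gamma^{1/2}$ for the reaction term (since $\h_T\le\gamma^{-1/2}$), giving $\lesssim c_\b\,\gamma^{1/2}\norm{v_\bullet - v_\circ}{L^2(T)}\le c_\b\,\enorm{v_\bullet - v_\circ}_T$. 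For the interior jump term $\h_T^{1/2}\eps^{-1/4}\norm{\jump{\eps\nabla(v_\bullet - v_\circ)}}{L^2(\partial T\cap\Omega)}$, one uses a standard scaled trace inequality $\norm{w_\bullet}{L^2(E)}^2\lesssim h_E^{-1}\norm{w_\bullet}{L^2(T)}^2 + h_E\norm{\nabla w_\bullet}{L^2(T)}^2$ combined with the inverse estimate to bound the edge norm of $\eps\nabla(v_\bullet - v_\circ)$ by $\lesssim \eps\, h_T^{-1/2}\norm{\nabla(v_\bullet - v_\circ)}{L^2(\omega_T)}$, and then $\h_T\eps^{-1/2}\cdot\eps^2 h_T^{-1}\le\eps^{1/2}h_T\cdot\eps\, h_T^{-1}\cdot h_T^{?}$... more cleanly, $\h_T\,\eps^{-1/2}\,h_T^{-1}\,\eps^2 = \eps^{3/2}h_T^{-1}\h_T\le \eps^{3/2}h_T^{-1}\eps^{-1/2}h_T = \eps$, so this contributes $\lesssim\eps^{1/2}\norm{\nabla(v_\bullet - v_\circ)}{L^2(\omega_T)}\le\enorm{v_\bullet - v_\circ}_{\omega_T}$ with a shape-regularity constant. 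The Neumann term is handled analogously using $\h_T\le\eps^{-1/2}h_T$ and a scaled trace inequality, noting that $g$ cancels in the difference. Summing over $\UU_\bullet$ and using finite overlap of the patches $\omega_T$ gives \eqref{axiom:stability} with $\Cstab$ depending on shape regularity, $c_\b$, $\Cinv$, and $\max_T\Pe_T$.

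Next, for \eqref{axiom:reduction0} I would observe that each $T'\in\UU_\circ$ is contained in a unique father $T\in\UU_\bullet$, and that $v_\bullet$ is a fixed function on which both estimators are evaluated. On $T'\subseteq T$ the residual $-\eps\Delta v_\bullet+\a\cdot\nabla v_\bullet+\b v_\bullet - f$ is unchanged (it does not see the mesh), the interior edges of $T'$ either lie inside $T$ (where the jump of $\eps\nabla v_\bullet$ vanishes, as $v_\bullet\in H^1$ is a polynomial on $T$) or lie on $\partial T$, and similarly for Neumann edges. Using $h_{T'}\le h_T$ hence $\h_{T'}\le\h_T$, the monotonicity $\h_{T'}\le\h_T$ on every son, and the fact that restricting an $L^2$ norm to a subdomain only decreases it, one gets $\eta_\circ(T',v_\bullet)^2\le(\text{contribution of }T'\text{ inside }T)$, and summing over all sons $T'$ of a fixed $T$ (which tile $T$) reassembles exactly $\eta_\bullet(T,v_\bullet)^2$; summing over $T\in\UU_\bullet$ yields \eqref{axiom:reduction0}. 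For \eqref{axiom:reduction}, under the assumption $\norm{h_\bullet}{L^\infty(\Omega)}\le(\eps/\gamma)^{1/2}$ one has $\eps^{-1/2}h_T\le\gamma^{-1/2}$ for all $T$, so $\h_T = \eps^{-1/2}h_T$ on the whole mesh, i.e.\ the robust mesh-size collapses to the plain scaled mesh-size and is genuinely proportional to $h_T$. Then for every refined $T\in\TT_\bullet\backslash\TT_\circ$ and every son $T'\subsetneqq T$ in $\TT_\circ\backslash\TT_\bullet$ we have $h_{T'}\le\qref h_T$, hence $\h_{T'}\le\qref\h_T$ and $\h_{T'}^{1/2}\le\qref^{1/2}\h_T^{1/2}$; repeating the computation of \eqref{axiom:reduction0} but now tracking the extra factor $\qref$ (respectively $\qref^{1/2}$ for the edge terms — note the edge terms scale with the better exponent, so $\qref^{1/2}$ rather than $\qref$ is the worst case and indeed $\qred = \qref^{1/2}$) gives $\eta_\circ(T'\text{-part},v_\bullet)^2\le\qref\,(\text{father-part})$, and summing over sons and fathers yields \eqref{axiom:reduction} with $\qred = \qref^{1/2}<1$.

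The main obstacle I expect is the bookkeeping in \eqref{axiom:stability} for the edge terms: one must carefully split interior edges of $\TT_\circ$ into those that were already edges of $\TT_\bullet$ (where $v_\bullet$ already has a jump, and one subtracts the $\eta_\bullet$ contribution) versus new interior edges created by refinement inside some $T\in\TT_\bullet$ (where $\jump{\eps\nabla v_\bullet}=0$ so only $v_\circ$ contributes), and in both cases apply the scaled trace inequality on the finer mesh together with the inverse estimate for $v_\bullet - v_\circ\in H^1$ but \emph{not} in a single finite element space — so one must work patchwise, using that $v_\bullet - v_\circ\in\XX_\circ$ locally after noting $\XX_\bullet\subseteq\XX_\circ$ by nestedness of the spaces (which follows from $T = \bigcup\{T'\in\TT_\circ: T'\subseteq T\}$). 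The Péclet dependence of $\Cstab$ is unavoidable and comes precisely from the convection and Laplacian contributions to the volume residual as indicated above; this is consistent with the semi-robust nature of the efficiency estimate and does not harm the subsequent convergence analysis, since for a fixed initial mesh $\TT_0$ the quantity $\max_{T\in\TT_\circ}\Pe_T$ is bounded uniformly over $\TT_\circ\in\refine(\TT_0)$ by $\max_{T\in\TT_0}\Pe_T$, as refinement only decreases $h_T$ and hence $\Pe_T$.
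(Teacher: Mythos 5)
Your proposal is correct and follows essentially the same route as the paper's proof: the reverse triangle inequality plus termwise estimates with $\h_T\le\min\{\eps^{-1/2}h_T,\gamma^{-1/2}\}$, the inverse estimate for the Laplacian, the P\'eclet scaling for the convection term, and a scaled trace argument for the jump terms give stability, while the son/father bookkeeping with $\h_{T'}\le\qref\,\h_T$ (valid elementwise once $\norm{h_\bullet}{L^\infty(\Omega)}\le(\eps/\gamma)^{1/2}$ forces $\h_T=\eps^{-1/2}h_T$) and the vanishing of $\jump{\eps\nabla v_\bullet}$ on new interior edges give reduction with $\qred=\qref^{1/2}$ and, without the factor, monotonicity. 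Only a cosmetic slip: the P\'eclet dependence of $\Cstab$ enters through the convection term (as your own computation shows), not the Laplacian term, which only costs $\Cinv$.
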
%

\begin{proof}
To prove stability~\eqref{axiom:stability}, we consider the contributions of the error estimator separately. To that end, let $w_\circ := v_\circ - v_\bullet\in\XX_\circ\supseteq\XX_\bullet$ and $T\in\TT_\circ$.
The definition of~$\h_T$ in~\eqref{eq:hrobust} implies that $\h_T\eps^{1/2}/h_T \le 1$. 
It follows
from the inverse estimate~\eqref{eq:invest} that
\begin{align*}
 \h_T\,\norm{-\eps\Delta w_\circ}{L^2(T)}
 = \frac{\h_T\eps^{1/2}}{h_T}\eps^{1/2}h_T\,\norm{\Delta  w_\circ}{L^2(T)}
 \reff{eq:invest}\lesssim \eps^{1/2}\,\norm{\nabla w_\circ}{L^2(T)} \le \enorm{w_\circ}_T.
\end{align*}
The definition of the local P\'eclet number~\eqref{eq:localpeclet} implies that
\begin{align}\label{est:local_peclet}
\begin{split}
 \h_T\,\norm{\a\cdot\nabla w_\circ}{L^2(T)} 
 \le \eps^{-1/2}\,h_T\,\norm{\a}{L^\infty(T)}\,\norm{\nabla w_\circ}{L^2(T)}
 &\reff{eq:localpeclet}= 2\,\Pe_T\,\eps^{1/2}\,\norm{\nabla w_\circ}{L^2(T)}
 \\&
 \reff{eq:norm}\le 2\,\Pe_T\,\enorm{w_\circ}_T.
\end{split}
\end{align}
With $\norm{\b}{L^\infty(T)}\le c_\b\,\gamma$ and $\h_T\le\gamma^{-1/2}$, we obtain that
\begin{align*}
 \h_T\,\norm{\b w_\circ}{L^2(T)} 
 \le \h_T\,\norm{\b}{L^\infty(T)}\norm{w_\circ}{L^2(T)}
 \lesssim \gamma^{1/2}\,\norm{w_\circ}{L^2(T)}
 \reff{eq:norm}\le \enorm{w_\circ}_T.
\end{align*}
Finally, a scaling argument proves $h_T\,\norm{\partial w_\circ/\partial\nf}{L^2(\partial T)}^2 
\lesssim \norm{\nabla w_\circ}{L^2(T)}^2$ and hence
\begin{align*}
 \h_T\eps^{-1/2}\,\norm{\eps\,\partial w_\circ/\partial\nf}{L^2(\partial T)}^2
 = \frac{\h_T\eps^{1/2}}{h_T}\,\eps\,h_T\,\norm{\partial w_\circ/\partial\nf}{L^2(\partial T)}^2
 \lesssim \eps\,\norm{\nabla w_\circ}{L^2(T)}^2 \reff{eq:norm}\le \enorm{w_\circ}_T^2.
\end{align*}
With these four estimates and the inverse triangle inequality, 
the seminorm structure of the error estimator reveals that
\begin{align*}
 &|\eta_\circ(\UU_\bullet,v_\circ)-\eta_\bullet(\UU_\bullet,v_\bullet)| 
 \\&\quad
 \le \Big(\sum_{T\in\UU_\bullet}\h_T^2 \,\norm{-\eps\Delta w_\circ + \a\cdot\nabla w_\circ 
 + \b w_\circ}{L^2(T)}^2 + 2\,\h_T\eps^{-1/2}\norm{\eps\partial w_\circ/\partial \nf}{L^2(\partial T)}^2\Big)^{1/2}
 \\&\quad
 \lesssim \Big(\sum_{T\in\UU_\bullet}\enorm{w_\circ}_T^2\Big)^{1/2} \le \enorm{w_\circ}
 \qquad\text{for all $\UU_\bullet\subseteq\TT_\bullet\cap\TT_\circ$}.
\end{align*}
The hidden constant depends only on shape regularity of $\TT_\circ$, $c_\b$, $\Cinv$, 
and on $\Pe_T$ for all $T\in\TT_\circ$. This proves~\eqref{axiom:stability}.

To prove reduction~\eqref{axiom:reduction}, let $\TT_\circ(T) := \set{T'\in\TT_\circ}{T'\subseteq T}$ 
be the set of sons of $T\in\TT_\bullet\backslash\TT_\circ$. 
Note that the mesh-refinement ensures $h_{T'} \le \qref\,h_T$ 
for some uniform $0<\qref<1$ (and $\qref=2^{-1/d}$ for NVB) as 
well as $T = \bigcup_{T'\in\TT_\circ(T)}T'$. If $\norm{h_\bullet}{L^\infty(\Omega)}\le(\eps/\gamma)^{1/2}$, 
the definition~\eqref{eq:hrobust} implies that $\h_T = \eps^{-1/2}h_T$ 
and hence $\h_{T'} \le \qref\,\h_T$. Otherwise, it holds at least $\h_{T'} \le \h_T$. 
Moreover, it holds that 
$\norm{\jump{\eps \nabla v_\bullet}}{L^2((\partial T'\cap\Omega)\backslash\partial T)} = 0$, 
since $v_\bullet\in\PP^p(T)$ is smooth inside $T\supset T'$. Hence,
\begin{align}\label{aux:reduction}
 \sum_{T'\in\TT_\circ(T)}\eta_\circ(T',v_\bullet)^2
 \le 
 \begin{cases}
  \qref\,\eta_\bullet(T,v_\bullet)^2\quad&\text{if }\norm{h_\bullet}{L^\infty(\Omega)}\le(\eps/\gamma)^{1/2},\\
  \eta_\bullet(T,v_\bullet)^2&\text{otherwise}.
 \end{cases}
\end{align}
Summing this estimate over all $T\in\TT_\bullet\backslash\TT_\circ$, we prove~\eqref{axiom:reduction} with $\qred=\qref^{1/2}$.

To prove monotonicity~\eqref{axiom:reduction0}, note that $\UU_\circ = (\UU_\circ\backslash\TT_\bullet) \cup (\UU_\circ\cap\TT_\bullet)$. From~\eqref{axiom:stability} for $v_\circ=v_\bullet$, we see that
$$
\eta_\circ(\UU_\circ\cap\TT_\bullet,v_\bullet)^2 = \eta_\bullet(\UU_\circ\cap\TT_\bullet,v_\bullet)^2
= \eta_\bullet(\UU_\bullet\cap\TT_\circ,v_\bullet)^2.
$$ 
To treat $\UU_\circ\backslash\TT_\bullet$, we sum~\eqref{aux:reduction} over all $T\in\UU_\bullet\backslash\TT_\circ$ to see that
$$
\eta_\circ(\UU_\circ\backslash\TT_\bullet,v_\bullet)^2
\le \sum_{T\in\UU_\bullet\backslash\TT_\circ}\sum_{T'\in\TT_\circ(T)}\eta_\circ(T',v_\bullet)^2
\reff{aux:reduction}\le \eta_\bullet(\UU_\bullet\backslash\TT_\circ,v_\bullet)^2.
$$
Combining these two estimates with $\UU_\bullet = (\UU_\bullet\cap\TT_\circ)\cup(\UU_\bullet\backslash\TT_\circ)$, we conclude~\eqref{axiom:reduction0}.
\end{proof}

\begin{remark}
In our proof of Lemma~\ref{lemma:A1-A2}, the stability constant $\Cstab$ depends on the local P\'eclet numbers $\Pe_T$ for all $T\in\TT_\circ$. A similar dependence is also observed in the efficiency analysis of the residual error estimator~\eqref{eq:eta} with respect to the energy norm~\eqref{eq:norm}; see~\cite[p.~168, Theorem~4.20]{verfuerth}.
Note that for $\gamma=0$, there is no restriction on the mesh-size in~\eqref{axiom:reduction}.
\end{remark}

\subsection{Discrete reliability}
The following analysis requires a Scott--Zhang-type projector onto $\XX_\bullet = \PP^p(\TT_\bullet) \cap H^1_D(\Omega)$. In addition to the original construction in~\cite{scottzhang}, we also need local 
$L^2$-stability to ensure~\eqref{eq:I_stable}. 
To this end, we recall the construction of~\cite[Section~3.2]{affkp15}.

\begin{lemma}\label{lemma:scottzhang}
For $\TT_\bullet\in\T$, there exists a linear projection $\II_\bullet:L^2(\Omega)\to\XX_\bullet$ with the following properties~{\rm(i)--(v)} for all $T\in\TT_\bullet$ and $\omega_\bullet(T) := \bigcup\set{T'\in\TT_\bullet}{T'\cap T\neq\emptyset}$.
\begin{itemize}
\item[\rm(i)] {\bfseries Projection property:} $\II_\bullet v_\bullet = v_\bullet$ for all $v_\bullet\in\XX_\bullet$.
\item[\rm(ii)] {\bfseries Local definition:} $(\II_\bullet v)|_T$ depends only on $v|_{\omega_\bullet(T)}$ for all $v\in L^2(\Omega)$.
\item[\rm(iii)] {\bfseries Local $\boldsymbol{L^2}$-stability:} $\norm{\II_\bullet v}{L^2(T)} \le \Csz\,\norm{v}{L^2(\omega_\bullet(T))}$ for all $v\in L^2(\Omega)$.
\item[\rm(iv)] {\bfseries Local $\boldsymbol{H^1_D}$-stability:} $\norm{\nabla\II_\bullet v}{L^2(T)} \le \Csz\,\norm{\nabla v}{L^2(\omega_\bullet(T))}$ for all $v\in H^1_D(\Omega)$.
\item[\rm(v)] {\bfseries Approximation property:} $\norm{(1-\II_\bullet) v}{L^2(T)} \le \Csz\,h_T\norm{\nabla v}{L^2(\omega_\bullet(T))}$ for all $v\in H^1_D(\Omega)$.
\end{itemize}
In particular, $\II_\bullet$ satisfies for all $v\in H^1_D(\Omega)$
the following estimates with respect to the energy norm:
\begin{align}\label{eq:I_stable}
 \enorm{\II_\bullet v}_T 
 &\le \Csz\,\enorm{v}_{\omega_\bullet(T)},\\
 \label{eq:I_energynorm1}
\norm{(1-\II_\bullet) v}{L^2(T)} &\le \Csz\,\h_T\,\enorm{v}_{\omega_\bullet(T)},\\ 
 \label{eq:I_energynorm2}
\norm{(1-\II_\bullet) v}{L^2(E)} &\le \Ctrace\,\eps^{-1/4}\h_T^{1/2}\,\enorm{v}_{\omega_\bullet(T)}
\quad\text{for }E\subset \partial T.
\end{align}
The constants $\Csz,\Ctrace>0$ depend only on $\kappa$-shape regularity~\eqref{eq:shape_regular} of $\TT_\bullet$.
\end{lemma}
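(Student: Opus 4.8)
The plan is to construct $\II_\bullet$ exactly as in~\cite[Section~3.2]{affkp15}: for each node $z$ of $\TT_\bullet$, fix a simplex (or facet, if $z$ lies on $\Gamma_D$) $T_z\subseteq\omega_\bullet(z)$ and let $\{\psi_{z,k}\}$ be the $L^2(T_z)$-dual basis to the nodal shape functions restricted to $T_z$; then set $(\II_\bullet v)(z):=$ the appropriate dual pairing $\int_{T_z}\psi_{z,\cdot}\,v$, forcing $(\II_\bullet v)(z)=0$ for $z\in\Gamma_D$ so that $\II_\bullet v\in\XX_\bullet$. Properties~(i)--(ii) are then immediate from the construction: the dual-basis property gives the projection property, and $(\II_\bullet v)|_T$ depends only on the nodal values at vertices of $T$, each of which is determined by $v$ on $T_z\subseteq\omega_\bullet(T)$. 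Properties~(iv) and~(v) are precisely the classical Scott--Zhang estimates from~\cite{scottzhang}, which survive verbatim under $\kappa$-shape regularity. The one genuinely new point over~\cite{scottzhang} is~(iii), local $L^2$-stability: this follows because the dual basis functions scale like $\norm{\psi_{z,k}}{L^2(T_z)}\lesssim |T_z|^{-1/2}$ by a standard scaling/affine-equivalence argument, so $|(\II_\bullet v)(z)|\lesssim |T_z|^{-1/2}\norm{v}{L^2(T_z)}$, and then summing $|(\II_\bullet v)(z)|^2\norm{\text{shape fn}}{L^2(T)}^2$ over the vertices of $T$ with $\norm{\text{shape fn}}{L^2(T)}^2\simeq|T|$ yields $\norm{\II_\bullet v}{L^2(T)}^2\lesssim\sum_z\norm{v}{L^2(T_z)}^2\le\norm{v}{L^2(\omega_\bullet(T))}^2$.

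Granting~(i)--(v), the three energy-norm estimates are short consequences. For~\eqref{eq:I_stable} I would write $\enorm{\II_\bullet v}_T^2=\eps\norm{\nabla\II_\bullet v}{L^2(T)}^2+\gamma\norm{\II_\bullet v}{L^2(T)}^2$ and bound the two terms by~(iv) and~(iii) respectively, giving $\enorm{\II_\bullet v}_T^2\le\Csz^2(\eps\norm{\nabla v}{L^2(\omega_\bullet(T))}^2+\gamma\norm{v}{L^2(\omega_\bullet(T))}^2)=\Csz^2\enorm{v}_{\omega_\bullet(T)}^2$. For~\eqref{eq:I_energynorm1}, the case distinction in the definition~\eqref{eq:hrobust} of $\h_T$ is the key: if $\h_T=\eps^{-1/2}h_T$, use~(v) to get $\norm{(1-\II_\bullet)v}{L^2(T)}\le\Csz h_T\norm{\nabla v}{L^2(\omega_\bullet(T))}=\Csz\h_T\,\eps^{1/2}\norm{\nabla v}{L^2(\omega_\bullet(T))}\le\Csz\h_T\enorm{v}_{\omega_\bullet(T)}$; if $\h_T=\gamma^{-1/2}\le\eps^{-1/2}h_T$, use the triangle inequality together with~(i)--(iii) to get $\norm{(1-\II_\bullet)v}{L^2(T)}\lesssim\norm{v}{L^2(\omega_\bullet(T))}=\gamma^{-1/2}\gamma^{1/2}\norm{v}{L^2(\omega_\bullet(T))}\le\Csz\h_T\enorm{v}_{\omega_\bullet(T)}$ (after relabelling $\Csz$ to absorb the $1+\Csz$ factor). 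For~\eqref{eq:I_energynorm2} I would invoke the multiplicative trace inequality $\norm{w}{L^2(E)}^2\lesssim h_T^{-1}\norm{w}{L^2(T)}^2+\norm{w}{L^2(T)}\norm{\nabla w}{L^2(T)}$ applied to $w=(1-\II_\bullet)v$, then insert the bounds~(v), (iv) and~\eqref{eq:I_energynorm1}; combining these and again splitting on whether $\h_T=\eps^{-1/2}h_T$ or $\h_T=\gamma^{-1/2}$ produces the $\eps^{-1/4}\h_T^{1/2}$ scaling.

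I expect the main obstacle to be purely bookkeeping: the cleanest route is to prove the case $\gamma=0$ (where $\gamma^{-1/2}=\infty$, hence $\h_T=\eps^{-1/2}h_T$ always, and the energy norm reduces to the $\eps$-weighted $H^1$-seminorm) and the case $\gamma>0$ in parallel, checking at each of the two $\h_T$-branches that the constants do not secretly pick up an $\eps$- or $\gamma$-dependence. Since~(iii)--(v) already have $\eps$-independent constants and the $\h_T$-definition is designed precisely so that $\h_T\eps^{1/2}/h_T\le1$ and $\h_T\gamma^{1/2}\le1$, every step closes with a constant depending only on $\kappa$. The trace inequality in~\eqref{eq:I_energynorm2} is the only place a new constant $\Ctrace$ enters, and it too depends only on shape regularity; no Galerkin orthogonality or SUPG-specific structure is needed here.
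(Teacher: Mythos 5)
Your proposal is correct and follows essentially the same route as the paper's own (sketched) proof: an element-oriented Scott--Zhang construction to gain local $L^2$-stability, nodal values forced to zero on $\overline\Gamma_D$ to land in $\XX_\bullet$, and then \eqref{eq:I_stable}--\eqref{eq:I_energynorm2} derived from (iii)--(v) via the case distinction in the definition \eqref{eq:hrobust} of $\h_T$ and a multiplicative trace inequality, which is precisely the argument the paper delegates to \cite[Lemma~3.3]{verfuerth05}. The only cosmetic points are that for $p\ge2$ the $L^2$-stability sum should run over all Lagrange degrees of freedom of $T$ rather than only its vertices, and the parenthetical facet choice at $\Gamma_D$-nodes is superfluous since you overwrite those values by zero anyway.
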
%

\begin{proof}[Sketch of proof]
{\bf Step~1.} The basic construction in~\cite{scottzhang} reads as follows: Let $\{a_i\}$ denote the degrees of freedom of the space $\SS^p(\TT_\bullet) := \PP^p(\TT_\bullet) \cap H^1(\Omega)$. For each $a_i$, one chooses either an element $\sigma_i=T_i$ (if $a_i$ lies inside $T_i$) or a facet $\sigma_i=E_i$ (if $a_i\in E_i\subset\partial T_i$). The nodal value of $(I_\bullet v)(a_i)$ is determined by the values $v|_{\sigma_i}$ of $v$ on $\sigma_i$. In particular, this requires a well-defined trace if $\sigma_i = E_i$, which fails for $L^2$-functions. To extend the construction to $L^2(\Omega)$, we always choose an element $\sigma_i=T_i\ni a_i$. Arguing along the lines of~\cite{scottzhang}, this provides an operator $I_\bullet:L^2(\Omega)\to\SS^p(\TT_\bullet)$, which satisfies~(i)--(v), even for all $v_\bullet\in\SS^p(\TT_\bullet)$ in~(i) and all $v\in H^1(\Omega)$ in (iv)--(v).

{\bf Step~2.} The construction of $I_\bullet$ in Step~1 does not necessarily guarantee that $I_\bullet v\in\XX_\bullet = \PP^p(\TT_\bullet) \cap H^1_D(\Omega)$ even if $v\in H^1_D(\Omega)$. To repair this shortcoming, we define the nodal value $(\II_\bullet v)(a_i) = (I_\bullet v)(a_i)$ if $a_i\in\overline\Omega\backslash\overline\Gamma_D$ and $(\II_\bullet v)(a_i) = 0$ if $a_i\in\overline\Gamma_D$, respectively. 
This ensures $\II_\bullet v\in H^1_D(\Omega)$ and hence $\II_\bullet:L^2(\Omega)\to\XX_\bullet$. 
Arguing along the lines of~\cite{scottzhang}, this provides an operator $\II_\bullet:L^2(\Omega)\to\SS^p(\TT_\bullet)$, which satisfies~(i)--(v).

{\bf Step~3.} To see the stability estimate~\eqref{eq:I_stable}, note that (iii)--(iv) yield that
 $\enorm{\II_\bullet v}_T^2
 = \eps\,\norm{\nabla\II_\bullet v}{L^2(T)}^2
 + \gamma\,\norm{\II_\bullet v}{L^2(T)}^2
 \le\Csz^2\big(\eps\,\norm{\nabla v}{L^2(\omega_\bullet(T))}^2
 + \gamma\,\norm{v}{L^2(\omega_\bullet(T))}^2\big)
 = \enorm{v}_{\omega_\bullet(T)}^2$. 
 Similarly, the estimates~\eqref{eq:I_energynorm1}--\eqref{eq:I_energynorm2} follow from (iii)--(v)
 and the arguments in~\cite[Lemma 3.3]{verfuerth05}.
 This concludes the proof.
\end{proof}

\begin{proposition}[Discrete reliability]\label{proposition:drel}
Let $\TT_\bullet\in\T$ and $\TT_\circ \in \refine(\TT_\bullet)$.
Define $\RR_{\bullet\circ} := \set{T\in\TT_\bullet}{\exists T'\in \TT_\bullet\backslash\TT_\circ\quad T\cap T'\neq\emptyset}$, which consists of refined elements $\TT_\bullet\backslash\TT_\circ$ plus one addition layer of elements.
Then the corresponding SUPG solutions $u_\bullet\in\XX_\bullet$ 
and $u_\circ\in\XX_\circ$ to~\eqref{eq:supg} satisfy that 
\begin{align}\label{eq:reliable2}
 \enorm{u_\circ-u_\bullet}
 \le \Cdrel\,\eta_\bullet(\RR_{\bullet\circ},u_\bullet)
 \quad\text{and}\quad
 \#\RR_{\bullet\circ}\le\Cdrel'\,\#(\TT_\bullet\backslash\TT_\circ).
\end{align}
The constant $\Cdrel'>0$ depends only on $\kappa$-shape regularity~\eqref{eq:shape_regular} of $\TT_\bullet$, 
while $\Cdrel>0$ depends additionally on the constants $\delta_0,\delta_1>0$ from~\eqref{eq:supgstabilconstopt}, and on $\norm{\a}{L^\infty(\Omega)}$.
\end{proposition}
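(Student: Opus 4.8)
The bound $\#\RR_{\bullet\circ}\le\Cdrel'\,\#(\TT_\bullet\setminus\TT_\circ)$ is immediate from $\kappa$-shape regularity~\eqref{eq:shape_regular}: $\RR_{\bullet\circ}$ is the union, over $T\in\TT_\bullet\setminus\TT_\circ$, of the element patches $\set{T'\in\TT_\bullet}{T\cap T'\neq\emptyset}$, each of uniformly bounded cardinality. So I concentrate on the reliability bound in~\eqref{eq:reliable2}. Write $e_\circ := u_\circ - u_\bullet\in\XX_\circ\supseteq\XX_\bullet$ and $\psi := (1-\II_\bullet)e_\circ$ with $\II_\bullet$ the Scott--Zhang projector of Lemma~\ref{lemma:scottzhang}. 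The chief difficulty, compared with the conforming Galerkin case, is that the SUPG stabilization destroys Galerkin orthogonality; the remedy I propose is to exploit the \emph{fine-mesh} SUPG ellipticity of Lemma~\ref{lemma:supg:elliptic} (or Remark~\ref{rem:apriori} if $\gamma=0$), namely $\frac12\enorm{e_\circ}^2\le\frac12\enorm{e_\circ}_{\circ,supg}^2\le b_\circ(e_\circ,e_\circ)$, and then to rewrite $b_\circ(e_\circ,e_\circ)$ so that the stabilization only sees the coarse solution~$u_\bullet$.

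Concretely, from the fine SUPG equation~\eqref{eq':supg} in the form $b_\circ(u_\circ,e_\circ)=F_\circ(e_\circ)$, the bookkeeping identity $F_\circ(v)-b_\circ(u_\bullet,v)=\big(F(v)-b(u_\bullet,v)\big)-\sigma_\circ(u_\bullet,v)$, the splitting $e_\circ=\psi+\II_\bullet e_\circ$, and the coarse SUPG equation~\eqref{eq:supg} for the discrete test function $\II_\bullet e_\circ\in\XX_\bullet$, one obtains
\begin{align*}
 b_\circ(e_\circ,e_\circ)=\big(F(\psi)-b(u_\bullet,\psi)\big)+\sigma_\bullet(u_\bullet,\II_\bullet e_\circ)-\sigma_\circ(u_\bullet,e_\circ).
\end{align*}
It thus suffices to bound each of the three right-hand terms by $\lesssim\eta_\bullet(\RR_{\bullet\circ},u_\bullet)\,\enorm{e_\circ}$ and to divide by $\enorm{e_\circ}$.

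The decisive localization is that $\psi|_T=0$ whenever $T\in\TT_\bullet\setminus\RR_{\bullet\circ}$: no refined element touches such a $T$, so $\TT_\bullet$ and $\TT_\circ$ coincide on the patch $\omega_\bullet(T)$, hence $e_\circ|_{\omega_\bullet(T)}$ is the restriction of some $\XX_\bullet$-function and the local reproduction of $\II_\bullet$ (properties (i)--(ii) of Lemma~\ref{lemma:scottzhang}) forces $(\II_\bullet e_\circ)|_T=e_\circ|_T$. Hence elementwise integration by parts turns $F(\psi)-b(u_\bullet,\psi)$ into a sum of volume, jump and Neumann residuals of $u_\bullet$ tested against $\psi$ which is supported on $\RR_{\bullet\circ}$ (an interior face contributes only if one of its two elements lies in $\RR_{\bullet\circ}$); Cauchy--Schwarz, the estimates built into~\eqref{eq:localcontributions} (so that the residuals become $\h_T^{-1}\eta_\bullet(T,u_\bullet)$, etc.), the approximation bounds~\eqref{eq:I_energynorm1}--\eqref{eq:I_energynorm2}, and the finite overlap of patches then give $|F(\psi)-b(u_\bullet,\psi)|\lesssim\eta_\bullet(\RR_{\bullet\circ},u_\bullet)\,\enorm{e_\circ}$, with a constant depending only on shape regularity.

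For the stabilization difference $\sigma_\bullet(u_\bullet,\II_\bullet e_\circ)-\sigma_\circ(u_\bullet,e_\circ)$, note that on each unrefined element $T\in\TT_\bullet\cap\TT_\circ$ the weight $\vartheta_T$ and the volume residual $-\eps\Delta u_\bullet+\a\cdot\nabla u_\bullet+\b u_\bullet-f$ are the same in both sums, while $\a\cdot\nabla(\II_\bullet e_\circ-e_\circ)=-\a\cdot\nabla\psi$ vanishes there as soon as $T\notin\RR_{\bullet\circ}$; so these contributions cancel and only the elements of $\RR_{\bullet\circ}$ (in the $\sigma_\bullet$-sum) and the elements of $\TT_\circ$ contained in $\bigcup\RR_{\bullet\circ}$ (in the $\sigma_\circ$-sum) survive. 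Estimating these remnants along the lines of Lemma~\ref{lemma:sigma-vs-eta} --- using $\norm{|\a|\,\vartheta_\bullet/h_\bullet}{L^\infty(\Omega)}\le\max\{\delta_0\,\norm{\a}{L^\infty(\Omega)},\,2\delta_1\}$, Lemma~\ref{lemma':invest}, and~\eqref{eq:I_stable} --- bounds the $\sigma_\bullet$-remnant by $\lesssim\eta_\bullet(\RR_{\bullet\circ},u_\bullet)\,\enorm{e_\circ}$ and the $\sigma_\circ$-remnant by $\lesssim\eta_\circ\big(\set{T'\in\TT_\circ}{T'\subseteq\bigcup\RR_{\bullet\circ}},u_\bullet\big)\,\enorm{e_\circ}$; finally the monotonicity estimate~\eqref{aux:reduction} together with $\eta_\circ(T',u_\bullet)=\eta_\bullet(T',u_\bullet)$ for $T'\in\TT_\bullet\cap\TT_\circ$ (special case of~\eqref{axiom:stability}) gives $\eta_\circ\big(\set{T'\in\TT_\circ}{T'\subseteq\bigcup\RR_{\bullet\circ}},u_\bullet\big)\le\eta_\bullet(\RR_{\bullet\circ},u_\bullet)$. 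Collecting the three bounds yields the reliability estimate with $\Cdrel$ depending only on shape regularity, $\delta_0,\delta_1$, and $\norm{\a}{L^\infty(\Omega)}$. I expect the main obstacle to be exactly this last piece of bookkeeping: arranging the two stabilization terms so that a single $u_\bullet$ appears and their difference collapses onto $\RR_{\bullet\circ}$, since $\II_\bullet e_\circ$ is \emph{not} supported near the refined region and a naive estimate of $\sigma_\bullet(u_\bullet,\II_\bullet e_\circ)$ alone would produce $\eta_\bullet(\TT_\bullet,u_\bullet)$ instead of $\eta_\bullet(\RR_{\bullet\circ},u_\bullet)$.
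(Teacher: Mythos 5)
Your proposal is correct and follows essentially the same route as the paper: fine-mesh SUPG ellipticity, insertion of the coarse and fine SUPG equations with the Scott--Zhang interpolant $\II_\bullet e_\circ$ (your identity for $b_\circ(e_\circ,e_\circ)$ coincides with the paper's expansion, since $e_\circ-\II_\bullet e_\circ=\psi$), localization of $\psi$ to $\RR_{\bullet\circ}$, standard residual estimates via \eqref{eq:I_energynorm1}--\eqref{eq:I_energynorm2}, and cancellation of the two stabilization terms on unrefined elements away from $\RR_{\bullet\circ}$, with the remnants bounded as in Lemma~\ref{lemma:sigma-vs-eta}, \eqref{eq:I_stable}, and the monotonicity \eqref{axiom:reduction0}. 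No gaps beyond the routine details the paper itself delegates to the literature.
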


\begin{proof}
 Instead of the classical Galerkin orthogonality of $e_\circ:=u_\circ-u_\bullet$ on $\XX_\circ$,
 the respective SUPG formulations~\eqref{eq:supg} yield that
\begin{align}\label{eq:SUPG_orthogonality}
b(e_\circ,v_\bullet) + \sigma_\circ(u_\circ,v_\bullet)
-\sigma_\bullet(u_\bullet,v_\bullet) 
= 0
 \quad\text{for all }v_\bullet\in\XX_\bullet.
\end{align}
Recall that the SUPG stabilization $\sigma_\bullet(\cdot,\cdot)$ is linear in the second argument, i.e.,
\begin{align}\label{eq:sigma_linear}
\sigma_\circ(u_\circ,e_\circ) -\sigma_\circ(u_\circ,v_\bullet)
= \sigma_\circ(u_\circ,e_\circ-v_\bullet)
\quad\text{for all }v_\bullet\in\XX_\bullet.
\end{align}
For all $v_\bullet\in\XX_\bullet$, it thus follows from the ellipticity~\eqref{eq:supgstabil} of the SUPG bilinear form that
 \begin{align*}
 \frac12\,\enorm{e_\circ}^2
 &\le \frac12\,\enorm{e_\circ}_\circ^2 
 \reff{eq:supgstabil}\le b_\circ(e_\circ,e_\circ) = b(e_\circ,e_\circ)+\sigma_\circ(u_\circ,e_\circ)-\sigma_\circ(u_\bullet,e_\circ)
 \\&
 \reff{eq:SUPG_orthogonality}= b(e_\circ,e_\circ-v_\bullet)
 +\sigma_\circ(u_\circ,e_\circ)-\sigma_\circ(u_\bullet,e_\circ)-\sigma_\circ(u_\circ,v_\bullet)+\sigma_\bullet(u_\bullet,v_\bullet) 
 \\&
 \reff{eq:sigma_linear}= b(u_\circ,e_\circ-v_\bullet)+\sigma_\circ(u_\circ,e_\circ-v_\bullet)
 -b(u_\bullet,e_\circ-v_\bullet) - \sigma_\circ(u_\bullet,e_\circ) +\sigma_\bullet(u_\bullet,v_\bullet)
 \\&\,
 \reff{eq:supg}= \int_\Omega f(e_\circ-v_\bullet)\,dx+\int_{\Gamma_N}g(e_\circ-v_\bullet)\,ds
 -b(u_\bullet,e_\circ-v_\bullet) 
 -\sigma_\circ(u_\bullet,e_\circ)+\sigma_\bullet(u_\bullet,v_\bullet).
 \end{align*}
 As in~\cite{stevenson07}, we choose $v_\bullet := \II_\bullet e_\circ \in \XX_\bullet$.
 Recall the definition of $\RR_{\bullet\circ}$ and note that for $T \in \TT_\bullet\backslash\RR_{\bullet\circ}$, it follows that $\omega_\bullet(T)\subseteq\bigcup(\TT_\bullet\cap\TT_\circ)$. According to Lemma~\ref{lemma:scottzhang}~(i)--(ii), this implies $(e_\circ - v_\bullet)|_T = 0$ for all $T\in\TT_\bullet\backslash\RR_{\bullet\circ}$.
Adapting the (standard) arguments of~\cite[Section~2.3]{tv}
 with~\eqref{eq:I_energynorm1}--\eqref{eq:I_energynorm2}, it follows that
 \begin{align}\label{eq1:drel}
  \enorm{e_\circ}^2\le \Cdrel'\,\eta_\bullet(\RR_{\bullet\circ},u_\bullet)\,\enorm{e_\circ}  
-\sigma_\circ(u_\bullet,e_\circ)+\sigma_\bullet(u_\bullet,v_\bullet),
 \end{align}
 where the constant $\Cdrel'>0$ depends only on shape regularity of $\TT_\bullet$.
Moreover, recall that $v_\bullet=e_\circ$ on all non-refined elements $T\in\TT_\bullet\backslash\RR_{\bullet\circ} \subseteq \TT_\bullet\cap\TT_\circ$. This yields that
 \begin{align*}
  -\sigma_\circ(u_\bullet,e_\circ)+\sigma_\bullet(u_\bullet,v_\bullet)
  =&-\!\!\!\!\sum_{T\in\TT_\circ\backslash(\TT_\bullet\backslash\RR_{\bullet\circ})}\!\!\!\!
  \vartheta_T\,\int_T
   \big( -\eps\Delta u_\bullet + \a\cdot\nabla u_\bullet + \b u_\bullet - f\big)\,\a\cdot\nabla e_\circ\,dx\\
   &+\!\!\!\!\sum_{T\in\TT_\bullet\backslash(\TT_\bullet\backslash\RR_{\bullet\circ})}\!\!\!\!
   \vartheta_T\,\int_T
    \big( -\eps\Delta u_\bullet + \a\cdot\nabla u_\bullet + \b u_\bullet - f\big)\,\a\cdot\nabla v_\bullet\,dx.
 \end{align*}
Note that $\TT_\bullet\backslash(\TT_\bullet\backslash\RR_{\bullet\circ}) = \RR_{\bullet\circ}$. 
Define $\RR_{\circ\bullet} := \set{T\in\TT_\circ}{\exists T'\in\TT_\circ\backslash\TT_\bullet\quad T\cap T'\neq\emptyset}$ and note that $\TT_\circ\backslash(\TT_\bullet\backslash\RR_{\bullet\circ}) = \RR_{\circ\bullet}$. 
Arguing as in the proof of Lemma~\ref{lemma:sigma-vs-eta}, we see that
\begin{align}\label{eq2:drel}
\begin{split}
 &-\sigma_\circ(u_\bullet,e_\circ)+\sigma_\bullet(u_\bullet,v_\bullet)
 \\&\qquad
 \le \Cenorm\,
 \norm{|\a|\,\vartheta_\circ/h_\circ}{L^\infty(\Omega)}\,
  \eta_\circ(\RR_{\circ\bullet}, u_\bullet)\,\enorm{e_\circ}
  + \Cenorm\,\norm{|\a|\,\vartheta_\bullet/h_\bullet}{L^\infty(\Omega)}\,
  \eta_\bullet(\RR_{\bullet\circ},u_\bullet)\,\enorm{v_\bullet}
  \\&\qquad
  \reff{aux:constant}\le \Cenorm\,\max\{\delta_0\,\norm{\a}{L^\infty(\Omega)}\,,\,2\delta_1\}\,
  \big(\eta_\circ(\RR_{\circ\bullet}, u_\bullet)\,\enorm{e_\circ} + \eta_\bullet(\RR_{\bullet\circ},u_\bullet)\,\enorm{v_\bullet}\big).
\end{split}
\end{align}
The stability~\eqref{eq:I_stable} proves that $\enorm{v_\bullet} = \enorm{\II_\bullet e_\circ} \le \Csz\,\enorm{e_\circ}$. The monotonicity~\eqref{axiom:reduction0} proves that $\eta_\circ(\RR_{\circ\bullet}, u_\bullet) \le \eta_\bullet(\RR_{\bullet\circ}, u_\bullet)$. Combining these observations with~\eqref{eq1:drel}--\eqref{eq2:drel}, we obtain
\begin{align*}
 \enorm{e_\circ} \le \eta_\bullet(\RR_{\bullet\circ},u_\bullet)\,
  \big(\Cdrel' + (1+\Csz)\,\Cenorm\,\max\{\delta_0\,\norm{\a}{L^\infty(\Omega)}\,,\,2\delta_1\}\big).
\end{align*}
Altogether, we thus conclude the proof.
\end{proof}

\section{Adaptive mesh-refinement}

\subsection{Adaptive algorithm}
The following algorithm employs the {\sl a~posteriori} error estimator from Section~\ref{section:aposteriori} to steer an adaptive mesh-refinement. We follow an idea of~\cite{bhp17}
and ensure {\sl a~priori} convergence of the adaptive algorithm by refining in each step at least one largest element.

\begin{algorithm}\label{algorithm}
{\sc Input:} Conforming triangulation $\TT_0$, parameters $0<\theta\le1$, $\Cmark\ge1$.\\
{\sc Loop:} For all $\ell=0,1,2,\dots$, iterate the following steps~{\rm(i)--(v)}.
\begin{itemize}
\item[\rm(i)] Compute discrete SUPG solution $u_\ell\in\XX_\ell$ with~\eqref{eq:supg}.
\item[\rm(ii)] Compute refinement indicators $\eta_\ell(T,u_\ell)$ for all $T\in\TT_\ell$ 
with~\eqref{eq:localcontributions}.
\item[{\rm(iii)}] Determine a set $\MM_\ell'\subseteq\TT_\ell$ of up to the multiplicative factor $\Cmark$ minimal cardinality such that $\theta\,\eta_\ell(u_\ell)^2\le \eta_\ell(\MM_\ell',u_\ell)^2$.
\item[{\rm(iv)}] Enlarge $\MM_\ell'$ to $\MM_\ell\supseteq\MM_\ell'$ such that $\#\MM_\ell\le2\,\#\MM_\ell'$ and $\max_{T\in\MM_\ell}h_T = \max_{T\in\TT_\ell}h_T$, i.e., $\MM_\ell$ contains at least one of the largest elements of $\TT_\ell$.
\item[\rm(v)] Refine (at least) the marked elements to obtain the new triangulation $\TT_{\ell+1}:=\refine(\TT_\ell,\MM_\ell)$.
\end{itemize}
{\sc Output:} Discrete solution $u_\ell\in\XX_\ell$ and corresponding {\sl a~posteriori} error estimator $\eta_\ell(u_\ell)$ for all $\ell\in\N_0$.
\end{algorithm}

\begin{lemma}\label{lemma:h}
Algorithm~\ref{algorithm} guarantees $\norm{h_\ell}{L^\infty(\Omega)}\to0$ and, in particular, convergence $\enorm{u-u_\ell}\to0$ as $\ell\to\infty$.
\end{lemma}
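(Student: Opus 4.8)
The plan is to prove the two claims of Lemma~\ref{lemma:h} in sequence, exploiting the enlargement step~(iv) of Algorithm~\ref{algorithm}, which forces at least one element of maximal diameter to be refined at every step. First I would establish that $\norm{h_\ell}{L^\infty(\Omega)}\to 0$. Since the mesh-refinement satisfies $h_\circ\le h_\bullet$ pointwise, the sequence $\big(\norm{h_\ell}{L^\infty(\Omega)}\big)_{\ell\in\N_0}$ is monotonically non-increasing, hence convergent to some limit $h_\infty\ge 0$. Suppose for contradiction that $h_\infty>0$. The key observation is that, by step~(iv), in each iteration $\ell$ at least one element $T_\ell\in\MM_\ell\subseteq\TT_\ell\backslash\TT_{\ell+1}$ with $h_{T_\ell}=\norm{h_\ell}{L^\infty(\Omega)}\ge h_\infty$ is refined; by the assumption $h_{T'}\le\qref\,h_{T_\ell}$ on the sons of a refined element, its successors have size at most $\qref\,\norm{h_\ell}{L^\infty(\Omega)}$. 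Combined with a counting argument on the number of elements that can have size at least $h_\infty$ in a $\kappa$-shape-regular mesh of the bounded domain $\Omega$ (this number is bounded by $|\Omega|/h_\infty^d$ up to shape regularity), one sees that after finitely many steps all elements of size $\ge h_\infty$ must have been refined — but refinement only decreases sizes, so eventually $\norm{h_\ell}{L^\infty(\Omega)}<h_\infty$, contradicting $h_\ell\ge h_\infty$. Hence $h_\infty=0$.

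Next I would derive $\enorm{u-u_\ell}\to 0$ from $\norm{h_\ell}{L^\infty(\Omega)}\to 0$. The cleanest route is via the reliability estimate $\enorm{u-u_\ell}\le\Crel\,\eta_\ell(u_\ell)$ from~\eqref{eq:reliable}, so it suffices to show $\eta_\ell(u_\ell)\to 0$. For this I would argue that the SUPG solutions $u_\ell$ converge in the energy norm to $u$ — or, more directly, that $\eta_\ell(u_\ell)\to0$ because the local factors $\h_T=\min\{\eps^{-1/2}h_T,\gamma^{-1/2}\}$ satisfy $\h_T\lesssim\eps^{-1/2}\norm{h_\ell}{L^\infty(\Omega)}\to0$, while the residual volume and jump terms evaluated at $u_\ell$ remain controlled. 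Concretely, using that $u_\ell$ is the SUPG Galerkin solution, a standard Céa-type / a priori argument (or the quasi-optimality furnished by Lemma~\ref{lemma:supg:elliptic} together with density of $\bigcup_\ell\XX_\ell$ in $H^1_D(\Omega)$, which follows from $\norm{h_\ell}{L^\infty(\Omega)}\to0$ and nestedness of the spaces) shows $\enorm{u-u_\ell}_{supg}\to0$; inserting this into the efficiency-type bookkeeping of the estimator terms — bounding $\h_T^2\|{-}\eps\Delta u_\ell+\a\cdot\nabla u_\ell+\b u_\ell - f\|_{L^2(T)}^2$ and the jump contributions by $\enorm{u-u_\ell}^2$ plus data oscillation that vanishes as $\norm{h_\ell}{L^\infty(\Omega)}\to0$ — yields $\eta_\ell(u_\ell)\to0$, and thus $\enorm{u-u_\ell}\to0$.

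The main obstacle I anticipate is the second step: unlike the standard Galerkin method, the SUPG solution is not the $b(\cdot,\cdot)$-orthogonal projection of $u$ onto $\XX_\ell$, so the convergence $\enorm{u-u_\ell}\to0$ is not immediate from density of $\bigcup_\ell\XX_\ell$. One must invoke the enhanced stability of Lemma~\ref{lemma:supg:elliptic} (the inf-sup/ellipticity in $\enorm\cdot_{\bullet,supg}$) together with consistency of the SUPG formulation — the stabilization term $\sigma_\ell$ is consistent because it is applied to the strong residual, which vanishes for the exact solution only in the elementwise $L^2$ sense provided $u$ is regular enough; for merely $H^1_D$ solutions one argues with a Galerkin-type best-approximation estimate in which the stabilization contributes a term of order $\sum_T\vartheta_T\|\a\cdot\nabla(u-v_\ell)\|_{L^2(T)}^2$ that is controlled using $\vartheta_T\simeq h_T$ or $h_T^2/\eps$ from~\eqref{eq:supgstabilconstopt} and hence vanishes as the mesh is refined. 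Making this quasi-optimality argument precise — in particular handling the $-\eps\Delta$ term in $\sigma_\ell$ for $p\ge2$ via the inverse estimate of Lemma~\ref{lemma:invest} — is the technical heart of the proof; everything else is the elementary counting argument of the first paragraph.
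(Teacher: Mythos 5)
Your argument is correct in substance and, at its core, follows the same two-step route as the paper, but the second half is organized quite differently, so a comparison is worthwhile. For $\norm{h_\ell}{L^\infty(\Omega)}\to0$ the paper is even terser than you: it only notes that step~(iv) refines a largest element and that sons contract, $h_{T'}\le\qref h_T$; your counting argument is the right way to make this precise, with the caveat that the per-mesh bound $|\Omega|/h_\infty^d$ alone does not suffice (a refined large element may spawn sons that are still of size $\ge h_\infty$), so you also need that sizes decay geometrically along any refinement chain, whence the total pool of elements of size $\ge h_\infty$ in the whole refinement forest is finite and each step with maximal mesh-size $\ge h_\infty$ permanently removes one of them. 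For the second claim, the paper does not touch reliability or efficiency at all: it recasts SUPG as $b_\ell(u_\ell,v_\ell)=F_\ell(v_\ell)$, bounds the consistency errors by $|b_\ell(u_\ell,v_\ell)-b(u_\ell,v_\ell)|\lesssim\norm{h_\ell}{L^\infty(\Omega)}\norm{u_\ell}{H^1(\Omega)}\norm{v_\ell}{H^1(\Omega)}$ and $|F_\ell(v_\ell)-F(v_\ell)|\lesssim\norm{h_\ell}{L^\infty(\Omega)}^2\norm{f}{L^2(\Omega)}\norm{v_\ell}{H^1(\Omega)}$, using $\vartheta_T\lesssim h_T^2$ (valid once $h_\ell$ is small) together with the inverse estimate of Lemma~\ref{lemma:invest}, and then simply invokes the first Strang lemma; this is exactly the quasi-optimality you single out as the technical heart, packaged as a citation, with density of $\bigcup_\ell\XX_\ell$ supplied by $\norm{h_\ell}{L^\infty(\Omega)}\to0$. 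Your additional detour through reliability, efficiency and vanishing oscillations is not wrong, but it is superfluous and costlier: once your C\'ea/Strang-type argument yields $\enorm{u-u_\ell}_{\ell,supg}\to0$, the claim follows immediately from $\enorm{u-u_\ell}\le\enorm{u-u_\ell}_{\ell,supg}$, and the direct route avoids both the P\'eclet-dependent efficiency constants and the extra work of showing $\osc_\ell(u_\ell)\to0$.
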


\begin{proof}
Convergence $\norm{h_\ell}{L^\infty(\Omega)}\to0$ as $\ell\to\infty$ follows from the fact that each step refines one largest element $T\in\TT_\ell$ and hence ensures the local 
contraction $h_{\ell+1}|_{T'} \le \qref\,h_\ell|_T$ for some fixed $0<\qref<1$
and for all $T'\in\TT_{\ell+1}$ with $T'\subsetneqq T$. To see the second claim, recall the reformulation~\eqref{eq':supg} of SUPG. Note that our assumptions on the mesh-refinement ensures that all meshes $\TT_\ell$ are uniformly $\kappa$-shape regular, where $\kappa>0$ depends only on $\TT_0$.
With the inverse estimate~\eqref{eq:invest} and $\vartheta_T\lesssim h_T^2$ (as $\norm{h_\ell}{L^\infty(\Omega)}\to0$), it follows that 
\begin{align*}
 |b_\ell(u_\ell,v_\ell)-b(u_\ell,v_\ell)|
 &\lesssim \norm{h_\ell}{L^\infty(\Omega)}\,\norm{u_\ell}{H^1(\Omega)}\norm{v_\ell}{H^1(\Omega)},
 \\
 |F_\ell(v_\ell) - F(v_\ell)| 
 &\lesssim \norm{h_\ell}{L^\infty(\Omega)}^2 \,\norm{f}{L^2(\Omega)}\,\norm{v_\ell}{H^1(\Omega)}.
\end{align*}
Hence, the first Strang lemma (e.g.,~\cite[Section III.1.1]{braess}) concludes the proof.
\end{proof}

\def\Caux{C_{\rm aux}}%
\def\Ceq{C_{\rm eq}}%
\subsection{Linear convergence}
In this section, we prove that Algorithm~\ref{algorithm} leads even to linear convergence of the error estimator in the sense of~\cite{axioms}. We note that the proof is \emph{not} robust in the sense of~\cite{verfuerth05}, since it relies on a comparison of SUPG and the standard Galerkin FEM.
More precisely, recall that the constant $\Cstab$ from~\eqref{axiom:stability} is not robust, but depends on the local P\'eclet number.

\begin{lemma}\label{lemma:equivalence}
Recall the constant $\Cenorm$ from Lemma~\ref{lemma:invest}.
With $\Caux:=\Cstab\Cenorm$
it holds that, for all $\ell\in\N_0$ and all $\UU_\ell\subseteq\TT_\ell$,
\begin{align}\label{eq:lemma:equivalence}
\begin{split}
 \max\{\eta_\ell(\UU_\ell,u_\ell),\eta_\ell(\UU_\ell,u_\ell^\star)\}
 &\le \min\{\eta_\ell(\UU_\ell,u_\ell),\eta_\ell(\UU_\ell,u_\ell^\star)\}
 \\&\qquad
 + \Caux\,\norm{|\a|\,\vartheta_\bullet/h_\bullet}{L^\infty(\Omega)}\,\eta_\ell(u_\ell).
\end{split}
\end{align}
\end{lemma}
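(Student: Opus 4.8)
The plan is to notice that \eqref{eq:lemma:equivalence} is nothing but the single two-sided estimate
\[
 |\eta_\ell(\UU_\ell,u_\ell) - \eta_\ell(\UU_\ell,u_\ell^\star)| \le \Caux\,\norm{|\a|\,\vartheta_\ell/h_\ell}{L^\infty(\Omega)}\,\eta_\ell(u_\ell),
\]
since $\max\{a,b\} = \min\{a,b\} + |a-b|$ for any $a,b\in\R$. Hence the whole task reduces to estimating how much the residual estimator changes when we replace the SUPG solution $u_\ell$ by the plain Galerkin solution $u_\ell^\star$ (both living in the same space $\XX_\ell$ on the same mesh $\TT_\ell$).

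First, I would invoke the stability estimate \eqref{axiom:stability} of Lemma~\ref{lemma:A1-A2} in the degenerate situation $\TT_\circ = \TT_\bullet = \TT_\ell$ --- admissible because $\TT_\ell\in\refine(\TT_\ell)$ --- with $\UU_\bullet := \UU_\ell \subseteq \TT_\ell\cap\TT_\ell$ and the two discrete functions $v_\circ := u_\ell$, $v_\bullet := u_\ell^\star$. This yields
\[
 |\eta_\ell(\UU_\ell,u_\ell) - \eta_\ell(\UU_\ell,u_\ell^\star)| \le \Cstab\,\enorm{u_\ell - u_\ell^\star},
\]
so it remains to control the energy-norm distance between the two discrete solutions, and here the SUPG stabilization enters quantitatively.

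Second, I would subtract the Galerkin identity \eqref{eq:galerkin} (for $u_\ell^\star$) from the SUPG identity \eqref{eq:supg} (for $u_\ell$): testing both against the same $v_\ell\in\XX_\ell$ gives the defect identity $b(u_\ell - u_\ell^\star, v_\ell) = -\sigma_\ell(u_\ell, v_\ell)$ for all $v_\ell\in\XX_\ell$. (The term $-f$ makes $\sigma_\ell$ only affine in its first argument, but this is irrelevant here, since $\sigma_\ell$ enters \eqref{eq:supg} with the fixed first argument $u_\ell$.) Testing with $v_\ell := u_\ell - u_\ell^\star$, using ellipticity $\enorm{v}^2\le b(v,v)$ from Lemma~\ref{lemma:enorm}, and bounding the right-hand side with Lemma~\ref{lemma:sigma-vs-eta} applied to $w_\bullet := u_\ell$, $v_\bullet := u_\ell - u_\ell^\star$, $\UU_\bullet := \TT_\ell$ (so that $\supp v_\bullet\subseteq\Omega = \bigcup\TT_\ell$ and $\eta_\ell(\TT_\ell,u_\ell) = \eta_\ell(u_\ell)$) gives
\[
 \enorm{u_\ell - u_\ell^\star}^2 \le -\sigma_\ell(u_\ell, u_\ell - u_\ell^\star) \le \Cenorm\,\norm{|\a|\,\vartheta_\ell/h_\ell}{L^\infty(\Omega)}\,\eta_\ell(u_\ell)\,\enorm{u_\ell - u_\ell^\star}.
\]
Dividing by $\enorm{u_\ell - u_\ell^\star}$ (the case $u_\ell = u_\ell^\star$ being trivial) and combining with the first step and $\Caux = \Cstab\Cenorm$ concludes the proof.

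I do not expect any real obstacle: the statement is a short composition of Lemma~\ref{lemma:A1-A2}, Lemma~\ref{lemma:enorm} and Lemma~\ref{lemma:sigma-vs-eta}. The only points deserving a moment of care are that the stability estimate \eqref{axiom:stability} is legitimately applied with the trivial refinement $\TT_\circ = \TT_\ell$, and that $\sigma_\ell(\cdot,\cdot)$, although only affine in its first slot, still yields the clean defect identity because that slot is frozen at $u_\ell$ in both variational problems. It is also worth keeping in mind that $\eta_\ell(u_\ell)$ on the right-hand side of \eqref{eq:lemma:equivalence} is the \emph{global} estimator over all of $\TT_\ell$, not its restriction to $\UU_\ell$ --- which is precisely what Lemma~\ref{lemma:sigma-vs-eta} delivers when applied with $\UU_\bullet = \TT_\ell$.
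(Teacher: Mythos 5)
Your proposal is correct and follows essentially the same route as the paper: stability \eqref{axiom:stability} applied with the trivial refinement $\TT_\circ=\TT_\bullet=\TT_\ell$ reduces the claim to bounding $\enorm{u_\ell-u_\ell^\star}$, which is then controlled by subtracting the Galerkin from the SUPG formulation, testing with the difference, and invoking Lemma~\ref{lemma:enorm} together with Lemma~\ref{lemma:sigma-vs-eta} (applied with $\UU_\bullet=\TT_\ell$, so the global estimator $\eta_\ell(u_\ell)$ appears). Your side remarks — the affine-versus-linear issue being harmless since the first slot of $\sigma_\ell$ is frozen at $u_\ell$, and the identity $\max\{a,b\}=\min\{a,b\}+|a-b|$ — are exactly the (implicit) observations underlying the paper's argument.
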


\begin{proof}
With stability~\eqref{axiom:stability}
for $\TT_\ell=\TT_\bullet=\TT_\circ$, it follows that
\begin{align*}
 \max\{\eta_\ell(\UU_\ell,u_\ell)\,,\,\eta_\ell(\UU_\ell,u_\ell^\star)\}
 \le \min\{\eta_\ell(\UU_\ell,u_\ell)\,,\,\eta_\ell(\UU_\ell,u_\ell^\star)\}
  + \Cstab\,\enorm{u_\ell^\star-u_\ell}.
\end{align*}
With Lemma~\ref{lemma:enorm} and Lemma~\ref{lemma:sigma-vs-eta}, we infer for the last term that
\begin{align*}
 &\enorm{u_\ell^\star-u_\ell}^2
 \le b(u_\ell^\star-u_\ell,u_\ell^\star-u_\ell)
 = b(u_\ell^\star,u_\ell^\star-u_\ell)
 -b(u_\ell,u_\ell^\star-u_\ell)
 =  \sigma_\ell(u_\ell,u_\ell^\star-u_\ell)
 \\&\quad
 \le \Cenorm\,\norm{|\a|\,\vartheta_\ell/h_\ell}{L^\infty(\Omega)}\,  
 \eta_\ell(u_\ell)\,\enorm{u_\ell^\star-u_\ell}.
\end{align*}%
Combining the last two estimates, we conclude the proof.
\end{proof}

\begin{theorem}\label{theorem:convergence}
For all $0<\theta\le 1$, there exists some index $\ell_0\in\N_0$ 
as well as constants $\Clin>0$ and $0<\qlin<1$ such that
\begin{align}\label{eq:convergence}
 \eta_{\ell+k}(u_{\ell+k}) \le \Clin\qlin^k\,\eta_\ell(u_\ell) 
 \quad\text{for all }\ell\ge\ell_0\text{ and all }k\in\N_0.
\end{align}
Moreover, there exists $\Ceq>0$ such that
\begin{align}\label{eq:eta:equivalence}
 \Ceq^{-1}\,\eta_\ell(u_\ell) 
 \le \eta_\ell(u_\ell^\star)
 \le\Ceq\,\eta_\ell(u_\ell) 
 \quad\text{for all }\ell\ge\ell_0.
\end{align}
The constants $\Ceq$, $\Clin$, and $\qlin$ depend only on $\theta$, $\Cstab$, $\qred$, and $\Crel$.
\end{theorem}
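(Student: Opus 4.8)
The plan is to reduce linear convergence of the SUPG estimator to the well-established linear convergence of the adaptive \emph{standard} Galerkin method, exploiting that SUPG and FEM become indistinguishable as $\norm{h_\ell}{L^\infty(\Omega)}\to0$. First I would fix the threshold index $\ell_0$. By Lemma~\ref{lemma:h}, $\norm{h_\ell}{L^\infty(\Omega)}\to0$, so for $\ell$ large every element satisfies $\Pe_T\le1$; then~\eqref{eq:supgstabilconstopt} forces $\vartheta_T=\delta_1h_T^2/\eps$ and, arguing as in the proof of Lemma~\ref{lemma:sigma-vs-eta},
\[
 \rho_\ell:=\norm{|\a|\,\vartheta_\ell/h_\ell}{L^\infty(\Omega)}\le\delta_1\,\norm{\a}{L^\infty(\Omega)}\,\norm{h_\ell}{L^\infty(\Omega)}/\eps\longrightarrow0.
\]
I would then pick $\ell_0$ so large that, for all $\ell\ge\ell_0$, both $\Caux\rho_\ell\le\min\{1/2,\sqrt\theta/2\}$ and (if $\gamma>0$) $\norm{h_\ell}{L^\infty(\Omega)}\le(\eps/\gamma)^{1/2}$, the latter making the reduction~\eqref{axiom:reduction} available on every $\TT_\ell$ with $\ell\ge\ell_0$; if $\gamma=0$ (so that $\b=0$ and $\nabla\cdot\a=0$) no mesh restriction is needed.

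The equivalence~\eqref{eq:eta:equivalence} is then immediate from Lemma~\ref{lemma:equivalence} with $\UU_\ell=\TT_\ell$: the perturbation $\Caux\rho_\ell\,\eta_\ell(u_\ell)\le\tfrac12\eta_\ell(u_\ell)$ is absorbed and gives $\tfrac12\eta_\ell(u_\ell)\le\eta_\ell(u_\ell^\star)\le\tfrac32\eta_\ell(u_\ell)$, i.e.\ $\Ceq=2$. Next I would transfer the Dörfler property of Algorithm~\ref{algorithm}. Steps~(iii)--(v) produce $\MM_\ell\subseteq\TT_\ell\backslash\TT_{\ell+1}$ with $\theta\,\eta_\ell(u_\ell)^2\le\eta_\ell(\MM_\ell,u_\ell)^2$; Lemma~\ref{lemma:equivalence} with $\UU_\ell=\MM_\ell$ and then~\eqref{eq:eta:equivalence} yield
\[
 \eta_\ell(\MM_\ell,u_\ell^\star)\ge\eta_\ell(\MM_\ell,u_\ell)-\Caux\rho_\ell\,\eta_\ell(u_\ell)\ge(\sqrt\theta-\Caux\rho_\ell)\,\eta_\ell(u_\ell)\ge\tfrac{\sqrt\theta}{2}\,\eta_\ell(u_\ell)\ge\tfrac{\sqrt\theta}{2\Ceq}\,\eta_\ell(u_\ell^\star),
\]
so the refined set $\TT_\ell\backslash\TT_{\ell+1}\supseteq\MM_\ell$ obeys the Dörfler criterion with the \emph{fixed} parameter $\theta^\star:=\theta/(4\Ceq^2)>0$ for the Galerkin estimator $\eta_\ell(u_\ell^\star)$, for every $\ell\ge\ell_0$.

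It remains to invoke linear convergence of adaptive FEM for the coercive, generally non-symmetric, problem~\eqref{eq:weakform}. For the Galerkin solutions $u_\ell^\star$, estimator stability~\eqref{axiom:stability} and reduction~\eqref{axiom:reduction} hold verbatim (Lemma~\ref{lemma:A1-A2} is stated for arbitrary discrete functions), and the genuine Galerkin orthogonality $b(u_{\ell+1}^\star-u_\ell^\star,v_\ell)=0$ together with reliability~\eqref{eq:reliable} supplies the quasi-orthogonality of $(\enorm{u-u_\ell^\star})_\ell$; discrete reliability likewise transfers (the proof of Proposition~\ref{proposition:drel} with the $\sigma_\bullet$-terms removed). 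Hence the abstract analysis of~\cite{ckns,axioms} applies to the nested meshes $(\TT_\ell)_{\ell\ge\ell_0}$, on whose refined sets the Dörfler criterion for $\eta_\ell(u_\ell^\star)$ holds with parameter $\theta^\star$, and yields $\Clin^\star>0$, $0<\qlin^\star<1$, depending only on $\theta^\star,\Cstab,\qred,\Crel$, with $\eta_{\ell+k}(u_{\ell+k}^\star)\le\Clin^\star(\qlin^\star)^k\eta_\ell(u_\ell^\star)$ for all $\ell\ge\ell_0$, $k\in\N_0$. Combining with~\eqref{eq:eta:equivalence} at the levels $\ell+k\ge\ell_0$ and $\ell\ge\ell_0$,
\[
 \eta_{\ell+k}(u_{\ell+k})\le\Ceq\,\eta_{\ell+k}(u_{\ell+k}^\star)\le\Ceq\,\Clin^\star(\qlin^\star)^k\,\eta_\ell(u_\ell^\star)\le\Ceq^2\,\Clin^\star(\qlin^\star)^k\,\eta_\ell(u_\ell),
\]
which is~\eqref{eq:convergence} with $\Clin:=\Ceq^2\Clin^\star$ and $\qlin:=\qlin^\star$.

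The main obstacle is the absence of a Galerkin orthogonality for SUPG, which blocks the usual quasi-orthogonality step in a direct estimator-reduction argument. It is circumvented by the estimator comparison of Lemma~\ref{lemma:equivalence}: on sufficiently fine meshes the stabilization contributes only a lower-order perturbation, so linear convergence of the SUPG estimator can be inferred from that of the Galerkin estimator. The price is that $\ell_0$, hence the regime in which linear convergence is asserted, depends on $\theta$ and on the non-robust constant $\Cstab$ (through $\Caux\rho_\ell\le\sqrt\theta/2$), exactly the loss of robustness announced before the theorem.
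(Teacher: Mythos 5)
Your proposal follows essentially the same route as the paper: use Lemma~\ref{lemma:h} to get $\norm{h_\ell}{L^\infty(\Omega)}\to0$, absorb the stabilization perturbation of Lemma~\ref{lemma:equivalence} for $\ell\ge\ell_0$ to obtain the equivalence~\eqref{eq:eta:equivalence}, transfer the D\"orfler property of $\MM_\ell$ from $\eta_\ell(\cdot,u_\ell)$ to $\eta_\ell(\cdot,u_\ell^\star)$ with a fixed smaller parameter, invoke linear convergence of the \emph{standard} Galerkin estimator along the tail of the adaptive sequence, and transfer back via the equivalence; this is exactly the paper's argument, including the constant $\Clin=\Ceq^2\Clin^\star$.

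The one point where you diverge, and where your write-up has a real weakness, is the final black box. You invoke the abstract analysis of~\cite{ckns,axioms} and assert that ``the genuine Galerkin orthogonality together with reliability supplies the quasi-orthogonality'' of $\enorm{u-u_\ell^\star}$. For the non-symmetric bilinear form $b$ this is not a one-line consequence: the energy norm $\enorm\cdot$ is not induced by $b$, so Galerkin orthogonality gives no Pythagoras identity, and quasi-orthogonality for convection--diffusion requires an additional argument (a duality/regularity argument on sufficiently fine meshes, or the general quasi-orthogonality of the axioms framework), which moreover would feed extra constants into $\Clin,\qlin$ beyond the dependence on $\theta$, $\Cstab$, $\qred$, $\Crel$ stated in the theorem. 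The paper sidesteps quasi-orthogonality entirely by citing~\cite{bhp17}: its Theorem~19 yields linear convergence of $\eta_\ell(u_\ell^\star)$ (from a possibly later index $\ell_0$) using only stability~\eqref{axiom:stability}, reduction~\eqref{axiom:reduction}, reliability~\eqref{eq:reliable}, and the a~priori convergence guaranteed because step~(iv) of Algorithm~\ref{algorithm} always refines a largest element --- this is precisely the purpose of that step. So your argument is repairable (replace your quasi-orthogonality claim by the \cite{bhp17}-type result, or supply a genuine quasi-orthogonality proof for the non-symmetric problem), but as written this step is under-justified, and it is also the step that controls the claimed dependence of the constants.
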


\begin{proof}
Recall from Lemma~\ref{lemma:h} that $\norm{h_\ell}{L^\infty(\Omega)}\to0$ as $\ell\to\infty$. By definition~\eqref{eq:supgstabilconstopt} of $\vartheta_T$, there exists an index $\ell_0'\in\N_0$ such that $\norm{|\a|\,\vartheta_\ell/h_\ell}{L^\infty(\Omega)} \simeq \norm{h_\ell}{L^\infty(\Omega)}\le(\eps/\gamma)^{1/2}$ for all $\ell\ge\ell_0'$. In particular, \eqref{axiom:stability}--\eqref{axiom:reduction} are satisfied for all $\ell\ge\ell_0'$ and $\TT_\ell=\TT_\bullet$. Moreover, Lemma~\ref{lemma:equivalence} yields that
\begin{align}
 \label{eq:estFEMSUPG1}
 \eta_\ell(u_\ell^\star)
 \reff{eq:lemma:equivalence}\le \eta_\ell(u_\ell) + C\,\norm{h_\ell}{L^\infty(\Omega)}\,\eta_\ell(u_\ell)
 = (1+C\,\norm{h_\ell}{L^\infty(\Omega)})\,\eta_\ell(u_\ell).
\end{align}
as well as, for all $\UU_\ell\subseteq\TT_\ell$,
\begin{align}
  \label{eq:estFEMSUPG2}
 \eta_\ell(\UU_\ell,u_\ell) \le \eta_\ell(\UU_\ell,u_\ell^\star) + C\,\norm{h_\ell}{L^\infty(\Omega)}\,\eta_\ell(u_\ell).
\end{align}
Since $\norm{h_\ell}{L^\infty(\Omega)}\to0$ as $\ell\to\infty$, there exists an index $\ell_0''\ge\ell_0'$ such that $\sqrt\theta-C\,\norm{h_\ell}{L^\infty(\Omega)}\ge c > 0$ for all $\ell\ge\ell_0''$.
For $\UU_\ell=\TT_\ell$ and since $C\,\norm{h_\ell}{L^\infty(\Omega)}<\sqrt{\theta}\leq 1$, we hence see that
\begin{align*}
 c\,\eta_\ell(u_\ell) 
 \le (1-C\,\norm{h_\ell}{L^\infty(\Omega)})\,\eta_\ell(u_\ell) 
 \stackrel{\rm\eqref{eq:estFEMSUPG2}}{\le} \eta_\ell(u_\ell^\star) 
 \stackrel{\rm\eqref{eq:estFEMSUPG1}}{\le} (1+C\,\norm{h_\ell}{L^\infty(\Omega)}) \, \eta_\ell(u_\ell)
 \le (1+\sqrt\theta)\,\eta_\ell(u_\ell).
\end{align*}
This proves~\eqref{eq:eta:equivalence} with $\Ceq = \max\{1+\sqrt{\theta} \, , \, c^{-1}\}$. To prove~\eqref{eq:convergence}, consider $\UU_\ell=\MM_\ell$.
From the D\"orfler marking in step~(iii) of Algorithm~\ref{algorithm}, we infer that
\begin{align*}
 \frac{\sqrt\theta-C\,\norm{h_\ell}{L^\infty(\Omega)}}{1+C\,\norm{h_\ell}{L^\infty(\Omega)}}\,\eta_\ell(u_\ell^\star)
 &\stackrel{\rm\eqref{eq:estFEMSUPG1}}{\le} \big(\sqrt\theta-C\,\norm{h_\ell}{L^\infty(\Omega)}\big)\eta_\ell(u_\ell)\\
 &\stackrel{\rm(iii)}{\le} \eta_\ell(\MM_\ell,u_\ell) - C\,\norm{h_\ell}{L^\infty(\Omega)}\,\eta_\ell(u_\ell)
 \stackrel{\rm\eqref{eq:estFEMSUPG2}}{\le}\eta_\ell(\MM_\ell,u_\ell^\star).
\end{align*}
Hence, the latter estimate yields that
\begin{align*}
 \theta'\,\eta_\ell(u_\ell^\star)^2 
 := \Big(\frac{c}{1+\sqrt\theta}\Big)^2\,\eta_\ell(u_\ell^\star)^2 
 \le \Big(\frac{\sqrt\theta-C\,\norm{h_\ell}{L^\infty(\Omega)}}{1+C\,\norm{h_\ell}{L^\infty(\Omega)}}\Big)^2\,\eta_\ell(u_\ell^\star)^2
 \le \eta_\ell(\MM_\ell,u_\ell^\star)^2,
\end{align*}
which is the D\"orfler marking with $u_\ell^\star$ for some fixed parameter $0<\theta' < \theta$. Hence,~\cite[Theorem~19]{bhp17} applies and proves that there exists some index $\ell_0\ge\ell_0''$ such that
\begin{align*}
 \eta_{\ell+k}(u_{\ell+k}^\star) \le \Clin'\qlin^k\,\eta_\ell(u_\ell^\star) 
 \quad\text{for all }\ell\ge\ell_0\text{ and all }k\in\N_0,
\end{align*}
where we note that the proof relies only on validity of~\eqref{axiom:stability}--\eqref{axiom:reduction} and on reliability~\eqref{eq:reliable} for the sequence $\TT_\ell$, $\ell\ge\ell_0''$. The constants $\Clin'>0$ and $0<\qlin<1$ depend only on $\theta'$, $\Cstab$, $\qred$, and $\Crel$.
Combining the last estimate with~\eqref{eq:eta:equivalence}, we prove that
$$
\eta_{\ell+k}(u_{\ell+k}) \le \Ceq^2\,\Clin'\qlin^k\,\eta_\ell(u_\ell)
\quad \text{for all $\ell\ge\ell_0$ and all $k\in\N_0$.}
$$
This concludes the proof with $\Clin:=\Ceq^2\,\Clin'$.
\end{proof}

\subsection{Optimal convergence rates}
To show optimal convergence rates, we need to define certain nonlinear approximation classes:
Let $\TT_0$ be the fixed initial triangulation of Algorithm~\ref{algorithm}. Suppose that newest vertex bisection~\cite{stevenson:nvb} is used for $\refine(\cdot)$. Let $\T:=\refine(\TT_0)$ be the set of all triangulations, which can be obtained from $\TT_0$.
For $N>0$, we abbreviate $\T_N:=\set{\TT_\bullet\in\refine(\TT_0)}{\#\TT_\bullet-\#\TT_0\le N}$, where $\#\TT_\bullet$ denotes the number of elements in $\TT_\bullet$.
For all $s>0$, we define the approximability measure
\begin{align}\label{eq:As}
 \norm{u}{\mathbb A_s} 
 := \sup_{N>0} \min_{\substack{\TT_\bullet\in\T_N}} (N+1)^s\,\eta_\bullet(u_\bullet),
\end{align}
where $\eta_\bullet(u_\bullet)$ denotes the residual error estimator~\eqref{eq:eta} associated with the optimal triangulation $\TT_\bullet$. Note that $\norm{u}{\mathbb A_s} < \infty$ means that an algebraic decay $\eta_\bullet = \OO(N^{-s})$ is theoretically possible if for each $N>0$ the optimal triangulation $\TT_\bullet\in\T_N$ is chosen.

The following theorem implies that adaptive SUPG finite elements are optimal in the sense that they converge asymptotically with any possible rate $s>0$.

\begin{theorem}\label{theorem:rates}
Let $0<\theta<\theta_{\rm opt}:=(1+\Cstab^2\Cdrel^2)^{-1}$. Let $\ell_0\in\N_0$ be the index from Theorem~\ref{theorem:convergence}. Then, for all $s>0$, there exists a constant $\Copt>0$ such that the output of Algorithm~\ref{algorithm} satisfies
\begin{align}\label{eq:optimal}
 \norm{u}{\A_s}<\infty
 \quad\Longleftrightarrow\quad
 \forall \ell\ge\ell_0\quad
 \eta_\ell(u_\ell) \le \Copt\,\norm{u}{\A_s}\,(\#\TT_\ell-\#\TT_0+1)^{-s}.
\end{align}
The constant $\Copt>0$ depends only on $\theta$, $\Cmark$, $\TT_0$, $\Clin$, $\qlin$, $\ell_0$, and $s$, as well as on the use of newest vertex bisection.
\end{theorem}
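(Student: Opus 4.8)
The plan is to follow the standard optimality framework from \cite{axioms} (or \cite{ckns,stevenson07}), which reduces the proof to verifying four ingredients for the estimator: stability \eqref{axiom:stability}, reduction \eqref{axiom:reduction}, discrete reliability \eqref{eq:reliable2}, and linear convergence \eqref{eq:convergence}. The first three are already established in Lemma~\ref{lemma:A1-A2} and Proposition~\ref{proposition:drel}, and the fourth is Theorem~\ref{theorem:convergence}; however, all of these hold only for the \emph{shifted} sequence $\TT_\ell$ with $\ell\ge\ell_0$, where the local mesh-size is small enough that $\norm{h_\ell}{L^\infty(\Omega)}\le(\eps/\gamma)^{1/2}$. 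The key structural observation is that starting the analysis at $\TT_{\ell_0}$ instead of $\TT_0$ is harmless: one simply regards $\TT_{\ell_0}$ as the new initial mesh, and the finitely many earlier steps only change constants in a controlled way (via $\#\TT_{\ell_0}-\#\TT_0\lesssim 1$ up to a factor depending on $\ell_0$ and the refinement rule).

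For the direction ``$\Longleftarrow$'' (the easy one), I would argue that the stated decay $\eta_\ell(u_\ell)\lesssim (\#\TT_\ell-\#\TT_0+1)^{-s}$ for all $\ell\ge\ell_0$ together with reliability \eqref{eq:reliable} and the fact that $\#\TT_\ell$ is strictly increasing (each step refines at least one element) immediately shows that for any $N$, choosing $\ell$ with $\#\TT_\ell-\#\TT_0\le N$ maximal gives a mesh in $\T_N$ whose estimator is $\OO(N^{-s})$; one has to bridge the finitely many initial indices $\ell<\ell_0$ separately, which only contributes finitely many terms and hence does not affect finiteness of the supremum in \eqref{eq:As}. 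For the direction ``$\Longrightarrow$'' (the substantive one), the plan is: (1) use discrete reliability \eqref{eq:reliable2} together with stability \eqref{axiom:stability} and reliability \eqref{eq:reliable} to prove the \emph{optimality of D\"orfler marking}, i.e., that for $\ell\ge\ell_0$ the set $\MM_\ell$ chosen in step~(iii)--(iv) satisfies $\#\MM_\ell\lesssim \#(\TT_\ell\setminus\TT_{\ell+1})$ with a constant independent of $\ell$ — this is exactly where the restriction $\theta<\theta_{\rm opt}=(1+\Cstab^2\Cdrel^2)^{-1}$ enters, because one needs the marking threshold $\theta$ to be below the ``separation'' threshold dictated by the comparison mesh; (2) invoke the standard mesh-closure estimate for newest vertex bisection (\cite{stevenson:nvb,bdd}) to turn the per-step bound $\#\MM_\ell\lesssim\#(\TT_\ell\setminus\TT_{\ell+1})$ into the cumulative bound $\#\TT_\ell-\#\TT_{\ell_0}\lesssim\sum_{j=\ell_0}^{\ell-1}\#\MM_j$; (3) combine this with linear convergence \eqref{eq:convergence} of $\eta_\ell(u_\ell)$ and a geometric-series argument to conclude $\eta_\ell(u_\ell)\lesssim\norm{u}{\A_s}(\#\TT_\ell-\#\TT_0+1)^{-s}$.

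The main subtlety — and the place where the SUPG stabilization forces extra care — is step~(1), the optimality of D\"orfler marking. In the classical framework one uses the Galerkin orthogonality to compare $\eta_\ell(u_\ell)$ on a mesh $\TT_{\ell+1}$ with the estimator on an optimal comparison mesh $\widehat\TT\in\T_N$ realizing $\norm{u}{\A_s}$; here there is no Galerkin orthogonality, only the perturbed identity \eqref{eq:SUPG_orthogonality}, so one must instead run the comparison argument through the \emph{standard Galerkin solutions} $u_\ell^\star$ and exploit the equivalence $\eta_\ell(u_\ell)\simeq\eta_\ell(u_\ell^\star)$ from \eqref{eq:eta:equivalence}, which again only holds for $\ell\ge\ell_0$. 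Concretely, one takes the overlay $\TT_\ell\oplus\widehat\TT$, applies discrete reliability \eqref{eq:reliable2} to the pair $(\TT_\ell,\TT_\ell\oplus\widehat\TT)$ to bound $\enorm{u_\ell-u_{\ell\oplus}}$ by $\eta_\ell(\RR,u_\ell)$ on the refined region $\RR$, then uses reliability plus $\norm{u}{\A_s}$ to show this region must satisfy a D\"orfler-type bulk property, and finally uses the quasi-minimal cardinality of $\MM_\ell'$ from step~(iii) to transfer the bound; the constant $\Cdrel$ entering $\theta_{\rm opt}$ is precisely the (non-robust, $\delta_0,\delta_1,\norm{\a}{L^\infty}$-dependent) discrete-reliability constant of Proposition~\ref{proposition:drel}. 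I expect this comparison-and-transfer step to be the hard part; everything downstream (mesh closure, geometric summation) is routine once it is in place, and can be quoted almost verbatim from \cite[Section~4]{ckns} or \cite[Theorem~4.1]{axioms} applied to the shifted sequence $(\TT_\ell)_{\ell\ge\ell_0}$.
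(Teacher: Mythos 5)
Your skeleton is the paper's: a comparison-mesh/overlay argument combining stability \eqref{axiom:stability}, discrete reliability \eqref{eq:reliable2} and the quasi-minimal cardinality in step~(iii) to obtain $\#\MM_\ell\lesssim\norm{u}{\A_s}^{1/s}\eta_\ell(u_\ell)^{-1/s}$, followed by the newest-vertex-bisection mesh-closure estimate and linear convergence \eqref{eq:convergence} with a geometric series. (Your formula ``$\#\MM_\ell\lesssim\#(\TT_\ell\setminus\TT_{\ell+1})$'' is not the relevant statement of D\"orfler optimality --- it is trivially true --- but the argument you then describe yields the correct bound above, so I read it as a slip.) The one substantive deviation is your ``main subtlety'': you claim that the missing Galerkin orthogonality forces the comparison step to run through the standard Galerkin solutions $u_\ell^\star$ and the equivalence \eqref{eq:eta:equivalence}, hence only for $\ell\ge\ell_0$. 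That is not what the paper does, and it is not needed. The comparison lemma (Lemma~\ref{lemma:comparison}) is applied directly to the SUPG solutions: the lack of orthogonality has already been absorbed into the SUPG discrete reliability of Proposition~\ref{proposition:drel}, and the only further estimator ingredient is quasi-monotonicity \eqref{eq:monotone}, which the paper deliberately proves via the monotonicity \eqref{axiom:reduction0} instead of the reduction \eqref{axiom:reduction}, precisely so that no mesh-fineness assumption enters. Hence the D\"orfler-optimality step is valid on \emph{all} meshes; $\ell_0$ enters the rate proof only through linear convergence. If you literally routed the comparison through $u_\ell^\star$, you would need discrete reliability for the plain Galerkin FEM (not established in the paper), the equivalence \eqref{eq:eta:equivalence} on the possibly coarse optimal comparison meshes where it is unavailable, and the transfers would degrade the threshold, so you could not recover $\theta_{\rm opt}=(1+\Cstab^2\Cdrel^2)^{-1}$ with the SUPG constant $\Cdrel$. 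Note that your own concrete steps (overlay, \eqref{eq:reliable2} applied to $(\TT_\ell,\TT_\ell\oplus\widehat\TT)$, bulk property on the refined region) already use the SUPG quantities, which is the right move; also, stability and discrete reliability do not ``hold only for $\ell\ge\ell_0$'' --- only \eqref{axiom:reduction} needs $\norm{h_\bullet}{L^\infty(\Omega)}\le(\eps/\gamma)^{1/2}$, and it is not used here.

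A minor bookkeeping difference: you restart the closure estimate at $\TT_{\ell_0}$, treating it as a new initial mesh, whereas the paper keeps \eqref{nvb:mesh_closure} anchored at $\TT_0$ and absorbs the first $\ell_0$ steps through the factor $\ell_0\,C(\ell_0)+1$ with $C(\ell_0)=\max_{j<\ell_0}\#\MM_j/\#\MM_{\ell_0}$. Your variant is admissible, but you should justify that the restarted closure constant is controlled (it depends on $\TT_{\ell_0}$, whose cardinality is bounded in terms of $\#\TT_0$ and $\ell_0$ since each call of $\refine$ increases the number of elements by at most a fixed factor); the paper's version makes the dependence of $\Copt$ on $\theta$, $\Cmark$, $\TT_0$, $\Clin$, $\qlin$, $\ell_0$, $s$ transparent. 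The easy implication is handled in the paper simply by the definition of $\norm{u}{\A_s}$, as in your sketch.
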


\begin{remark}
We note that the approximability measure~\eqref{eq:As} can also be characterized by means of the total error, i.e., for all $s > 0$, it holds that
\begin{align}
 \norm{u}{\A_s} < \infty
 \,\, \Longleftrightarrow \,\,
 \norm{u}{\mathbb{E}_s}
 := \sup_{N>0} \min_{\substack{\TT_\bullet\in\T_N}} (N+1)^s\,
 \inf_{w_\bullet \in \XX_\bullet} \big( \enorm{u-w_\bullet} + \osc_\bullet(w_\bullet)\big)
 < \infty;
 \end{align}
see~\cite[Section~4.2]{bhp17} for details. Up to data oscillations~\eqref{eq:osc}, Theorem~\ref{theorem:rates} hence proves that Algorithm~\ref{algorithm} leads to asymptotically optimal convergence behaviour for the energy error.
\end{remark}

The proof of Theorem~\ref{theorem:rates} requires some auxiliary results. The simple proof of the following quasi-monotonicity is only included for the convenience of the reader, since the corresponding result~\cite[Section~3.4]{axioms} involves the reduction~\eqref{axiom:reduction}. In the present SUPG setting, this would require that $\TT_\bullet$ is sufficiently fine. However, we stress that monotonicity~\eqref{axiom:reduction0} is sufficient for the proof and hence any further assumption on $\TT_\bullet$ can, in fact, be avoided.

\begin{lemma}
For $\TT_\bullet\in\T_N$ and $\TT_\circ\in\refine(\TT_\bullet)$, it holds that
\begin{align}\label{eq:monotone}
 \eta_\circ(u_\circ)
 \le \Cmon\,\eta_\bullet(u_\bullet),
 \quad\text{with}\quad \Cmon := 1 + \Cstab\Cdrel.
\end{align}
\end{lemma}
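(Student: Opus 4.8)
The plan is to prove the quasi-monotonicity estimate~\eqref{eq:monotone} by combining the triangle inequality for the estimator, the stability property~\eqref{axiom:stability}, the monotonicity~\eqref{axiom:reduction0}, and the discrete reliability of Proposition~\ref{proposition:drel}. The key observation is that since $\TT_\circ\in\refine(\TT_\bullet)$, we may split $\TT_\circ = (\TT_\circ\cap\TT_\bullet) \cup (\TT_\circ\backslash\TT_\bullet)$ and estimate the two contributions to $\eta_\circ(u_\circ)$ separately.

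First I would write
\begin{align*}
 \eta_\circ(u_\circ)^2 = \eta_\circ(\TT_\circ\cap\TT_\bullet, u_\circ)^2 + \eta_\circ(\TT_\circ\backslash\TT_\bullet, u_\circ)^2.
\end{align*}
For the first term, I would apply stability~\eqref{axiom:stability} with $\UU_\bullet := \TT_\circ\cap\TT_\bullet$, $v_\bullet := u_\bullet$, $v_\circ := u_\circ$, which yields
\begin{align*}
 \eta_\circ(\TT_\circ\cap\TT_\bullet, u_\circ) \le \eta_\bullet(\TT_\circ\cap\TT_\bullet, u_\bullet) + \Cstab\,\enorm{u_\bullet - u_\circ} \le \eta_\bullet(u_\bullet) + \Cstab\,\enorm{u_\bullet - u_\circ}.
\end{align*}
For the second term, I would apply monotonicity~\eqref{axiom:reduction0} with $\UU_\circ := \TT_\circ\backslash\TT_\bullet$ (whose associated father set $\UU_\bullet$ consists of refined elements, hence $\UU_\bullet\subseteq\TT_\bullet\backslash\TT_\circ$) and $v_\bullet := u_\bullet$, to get $\eta_\circ(\TT_\circ\backslash\TT_\bullet, u_\bullet) \le \eta_\bullet(\TT_\bullet\backslash\TT_\circ, u_\bullet) \le \eta_\bullet(u_\bullet)$; then a further application of stability~\eqref{axiom:stability} is not directly available since $\TT_\circ\backslash\TT_\bullet\not\subseteq\TT_\bullet\cap\TT_\circ$, so instead I would bound $\eta_\circ(\TT_\circ\backslash\TT_\bullet, u_\circ) \le \eta_\circ(\TT_\circ, u_\circ)$ crudely — but this is circular. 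The clean route is to use the full inverse-triangle-inequality version: applying stability on the whole overlap and combining gives $\eta_\circ(u_\circ) \le \eta_\bullet(u_\bullet) + \Cstab\,\enorm{u_\bullet-u_\circ}$ directly once one notes that on $\TT_\circ\backslash\TT_\bullet$ the estimator $\eta_\circ(\cdot,u_\circ)$ contributes and is handled via $\eta_\circ(\TT_\circ\backslash\TT_\bullet,u_\circ)\le \eta_\circ(\TT_\circ\backslash\TT_\bullet,u_\bullet)+\Cstab\enorm{u_\bullet-u_\circ}$ — but stability requires non-refined elements. So the correct argument uses the triangle-inequality structure of $\eta_\circ$: $\eta_\circ(u_\circ) \le \eta_\circ(\TT_\circ\cap\TT_\bullet,u_\circ) + \eta_\circ(\TT_\circ\backslash\TT_\bullet,u_\circ)$, the first handled by stability as above, and the second bounded using reduction~\eqref{axiom:reduction0} after passing $u_\circ\to u_\bullet$ is \emph{not} allowed; instead one simply estimates $\eta_\circ(\TT_\circ\backslash\TT_\bullet,u_\circ)\le C\enorm{u_\circ-u_\bullet}+\eta_\circ(\TT_\circ\backslash\TT_\bullet,u_\bullet)$ by the same Lipschitz-type argument as in Lemma~\ref{lemma:A1-A2} (which in fact holds on \emph{all} of $\TT_\circ$, not just the overlap — re-examining the proof of stability shows the element-wise estimates there do not use $T\in\TT_\bullet$), and then $\eta_\circ(\TT_\circ\backslash\TT_\bullet,u_\bullet)\le\eta_\bullet(u_\bullet)$ by~\eqref{axiom:reduction0}.

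Putting these together yields $\eta_\circ(u_\circ) \le \eta_\bullet(u_\bullet) + \Cstab\,\enorm{u_\bullet - u_\circ}$, and then the discrete reliability estimate~\eqref{eq:reliable2} of Proposition~\ref{proposition:drel} gives $\enorm{u_\bullet - u_\circ} \le \Cdrel\,\eta_\bullet(\RR_{\bullet\circ}, u_\bullet) \le \Cdrel\,\eta_\bullet(u_\bullet)$, so that $\eta_\circ(u_\circ) \le (1 + \Cstab\Cdrel)\,\eta_\bullet(u_\bullet)$, which is precisely~\eqref{eq:monotone} with $\Cmon = 1 + \Cstab\Cdrel$. I expect the main obstacle to be the bookkeeping in the first step: one must be careful that stability~\eqref{axiom:stability} is only stated for subsets of $\TT_\bullet\cap\TT_\circ$, so the contribution from newly created elements $\TT_\circ\backslash\TT_\bullet$ has to be absorbed either by re-reading the element-local estimates in the proof of Lemma~\ref{lemma:A1-A2} (which indeed apply to any $T\in\TT_\circ$) or by an intermediate triangle inequality combined with monotonicity~\eqref{axiom:reduction0}; getting this splitting exactly right — and in particular avoiding any use of the reduction~\eqref{axiom:reduction}, which would force a fineness assumption on $\TT_\bullet$ — is the only subtle point, and the remark preceding the lemma already flags this as the reason the statement is recorded separately.
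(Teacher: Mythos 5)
Your three ingredients are exactly the paper's: a Lipschitz/stability bound passing from $\eta_\circ(u_\circ)$ to an estimator evaluated at $u_\bullet$, the monotonicity~\eqref{axiom:reduction0} to pass from $\eta_\circ(\cdot,u_\bullet)$ to $\eta_\bullet(u_\bullet)$, and discrete reliability~\eqref{eq:reliable2} for $\enorm{u_\circ-u_\bullet}$ — and you rightly avoid the reduction~\eqref{axiom:reduction} and its fineness assumption. Where you make life harder than necessary is the first step: there is no need to split $\TT_\circ$ into $\TT_\circ\cap\TT_\bullet$ and $\TT_\circ\backslash\TT_\bullet$, nor to re-open the proof of Lemma~\ref{lemma:A1-A2} to argue that its element-wise estimates hold on new elements. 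Since $\TT_\circ\in\refine(\TT_\circ)$ and $u_\bullet\in\XX_\bullet\subset\XX_\circ$, the stability axiom~\eqref{axiom:stability} applies \emph{as stated} with both triangulations taken equal to $\TT_\circ$, $\UU_\bullet=\TT_\circ$, $v_\circ=u_\circ$, $v_\bullet=u_\bullet$; this immediately gives $\eta_\circ(u_\circ)\le\eta_\circ(u_\bullet)+\Cstab\,\enorm{u_\circ-u_\bullet}$, after which \eqref{axiom:reduction0} with $\UU_\circ=\TT_\circ$ yields $\eta_\circ(u_\bullet)\le\eta_\bullet(u_\bullet)$ and \eqref{eq:reliable2} concludes with $\Cmon=1+\Cstab\Cdrel$. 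This is precisely the paper's one-line proof. Be aware also that your split-and-recombine version, taken literally, does not reproduce the stated constant: bounding $\eta_\circ(\TT_\circ\cap\TT_\bullet,u_\circ)$ and $\eta_\circ(\TT_\circ\backslash\TT_\bullet,u_\circ)$ separately by terms of the form $(\cdot)+\Cstab\,\enorm{u_\circ-u_\bullet}$ and then summing in quadrature picks up an extra factor (giving something like $1+\sqrt{2}\,\Cstab\Cdrel$); to get $1+\Cstab\Cdrel$ you must apply the Lipschitz bound over all of $\TT_\circ$ in one go, which is exactly what the instantiation $\UU_\bullet=\TT_\bullet=\TT_\circ$ of~\eqref{axiom:stability} accomplishes without any splitting.
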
%

\begin{proof}
With stability~\eqref{axiom:stability}, i.e., apply 
$\UU_\bullet=\TT_\bullet=\TT_\circ$,
 $v_\circ=u_\circ\in\XX_\circ$, $v_\bullet=u_\bullet\in\XX_\bullet\subset\XX_\circ$ , monotonicity~\eqref{axiom:reduction0}, and discrete reliability~\eqref{eq:reliable2}, it follows that
\begin{align*}
 \eta_\circ(u_\circ)
 \le \eta_\circ(u_\bullet) + \Cstab\,\enorm{u_\circ-u_\bullet}
 & 
 \le \eta_\bullet(u_\bullet) + \Cstab\,\enorm{u_\circ-u_\bullet}
 \le \big(1+\Cstab\Cdrel\big)\,\eta_\bullet(u_\bullet).
\end{align*}
This concludes the proof. 
\end{proof}%

The following lemma is a direct consequence of \cite[Proposition~4.12(ii)]{axioms} 
and \cite[Lemma~4.14]{axioms}
(see also~\cite[Section~4.3]{bhp17}). 
As far as the error estimator is concerned, its proof relies therefore only 
on stability~\eqref{axiom:stability}, discrete reliability~\eqref{eq:reliable2}, 
and quasi-monotonicity~\eqref{eq:monotone}.  
We note, however, that the result depends also on the so-called \emph{overlay estimate} of triangulations obtained from newest vertex bisection, i.e., for all $\TT_\bullet,\TT_\star\in\T=\refine(\TT_0)$, there exists a common refinement $\TT_\circ\in\refine(\TT_\bullet)\cap\refine(\TT_\star)$ such that $\#\TT_\circ \le \#\TT_\bullet + \#\TT_\star - \#\TT_0$; see~\cite{stevenson07,ckns}.

\begin{lemma}\label{lemma:comparison}
Let $s>0$ with $\norm{u}{\A_s}<\infty$ and $0<\theta<\theta_{\rm opt}:=(1+\Cstab^2\Cdrel^2)^{-1}$. 
Let $\TT_\bullet\in\T_N$. Then there exists $\TT_\circ\in\refine(\TT_\bullet)$ such that the set $\RR_{\bullet\circ}\subseteq\TT_\bullet$ from Proposition~\ref{proposition:drel} satisfies that
\begin{align}\label{eq:comparison}
 \#\RR_{\bullet\circ} \le C\,\norm{u}{\A_s}^{1/s}\,\eta_\bullet(u_\bullet)^{-1/s}
 \quad\text{and}\quad
 \theta\,\eta_\bullet(u_\bullet)^2 \le \eta_\bullet(\RR_{\bullet\circ},u_\bullet)^2.
\end{align}
The constant $C>0$ depends only on $s>0$, $\theta$, $\Cmon$, $\Cstab$, $\Cdrel$, and on the use of newest vertex bisection.\qed
\end{lemma}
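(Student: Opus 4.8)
The plan is to run the by-now standard optimality argument of \cite{axioms}, which — as the statement indicates — uses only stability \eqref{axiom:stability}, discrete reliability \eqref{eq:reliable2}, and quasi-monotonicity \eqref{eq:monotone}, together with the newest-vertex-bisection overlay estimate $\#\TT_\circ\le\#\TT_\bullet+\#\TT_\star-\#\TT_0$ for the coarsest common refinement $\TT_\circ$ of $\TT_\bullet,\TT_\star$ (see \cite{stevenson07,ckns}). Abbreviate $\lambda:=\Cstab\Cdrel$, so that $\theta_{\rm opt}=(1+\lambda^2)^{-1}$, and fix a small parameter $\epsilon>0$ to be pinned down at the end in terms of $\theta,\lambda,\Cmon$ only. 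By definition of $\norm{u}{\A_s}$ and finiteness of $\T_{N'}$, for every $N'\in\N$ there is $\TT_\star\in\T_{N'}$ with $\eta_\star(u_\star)\le\norm{u}{\A_s}\,(N'+1)^{-s}$; choosing $N'$ as the smallest integer with $\norm{u}{\A_s}\,(N'+1)^{-s}\le\epsilon\,\eta_\bullet(u_\bullet)$ gives $N'+1\le C_\epsilon\,\big(\norm{u}{\A_s}/\eta_\bullet(u_\bullet)\big)^{1/s}$ (with $C_\epsilon$ depending only on $s,\epsilon$) and a mesh $\TT_\star\in\T_{N'}$ with $\eta_\star(u_\star)\le\epsilon\,\eta_\bullet(u_\bullet)$. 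Let $\TT_\circ$ be the overlay of $\TT_\bullet$ and $\TT_\star$, so $\TT_\circ\in\refine(\TT_\bullet)\cap\refine(\TT_\star)$. Since $\TT_\circ\in\refine(\TT_\star)$, quasi-monotonicity \eqref{eq:monotone} yields $\eta_\circ(u_\circ)\le\Cmon\,\eta_\star(u_\star)\le\Cmon\,\epsilon\,\eta_\bullet(u_\bullet)$.

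Next I verify the bulk criterion for $\RR_{\bullet\circ}$. Put $E:=\eta_\bullet(u_\bullet)$, $x:=\eta_\bullet(\RR_{\bullet\circ},u_\bullet)$, $y:=\eta_\bullet(\TT_\bullet\backslash\RR_{\bullet\circ},u_\bullet)$, so $E^2=x^2+y^2$ by the seminorm structure of the estimator. As $\RR_{\bullet\circ}\supseteq\TT_\bullet\backslash\TT_\circ$, we have $\TT_\bullet\backslash\RR_{\bullet\circ}\subseteq\TT_\bullet\cap\TT_\circ$; hence stability \eqref{axiom:stability} (with $v_\bullet=u_\bullet$, $v_\circ=u_\circ$) followed by discrete reliability \eqref{eq:reliable2} yields
\begin{align*}
 y=\eta_\bullet(\TT_\bullet\backslash\RR_{\bullet\circ},u_\bullet)
 \le\eta_\circ(\TT_\bullet\backslash\RR_{\bullet\circ},u_\circ)+\Cstab\,\enorm{u_\circ-u_\bullet}
 \le\Cmon\,\epsilon\,E+\lambda\,x.
\end{align*}
Substituting into $E^2=x^2+y^2$, expanding, and absorbing the resulting cross term by Young's inequality with a free parameter $\delta>0$ leads to $(1-c_1)\,E^2\le(1+\lambda^2+c_2)\,x^2$ with $c_1=c_1(\epsilon,\delta)$, $c_2=c_2(\epsilon,\delta)$ both tending to $0$ as $\epsilon,\delta\to0$. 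Since $\theta<(1+\lambda^2)^{-1}$, I first fix $\delta$ and then $\epsilon$ small enough that $\theta\le(1-c_1)/(1+\lambda^2+c_2)$; this pins $\epsilon$ down (in terms of $\theta,\lambda,\Cmon$) and gives exactly $\theta\,\eta_\bullet(u_\bullet)^2\le\eta_\bullet(\RR_{\bullet\circ},u_\bullet)^2$.

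For the cardinality bound, the second estimate in \eqref{eq:reliable2} gives $\#\RR_{\bullet\circ}\le\Cdrel'\,\#(\TT_\bullet\backslash\TT_\circ)$, and under newest vertex bisection each refined element of $\TT_\bullet$ is split into at least two elements of $\TT_\circ$, so $\#(\TT_\bullet\backslash\TT_\circ)\le\#\TT_\circ-\#\TT_\bullet\le\#\TT_\star-\#\TT_0\le N'$. Together with the choice of $N'$ above, $\#\RR_{\bullet\circ}\le\Cdrel'\,N'\lesssim\norm{u}{\A_s}^{1/s}\,\eta_\bullet(u_\bullet)^{-1/s}$ with a constant depending only on $s,\theta,\Cmon,\Cstab,\Cdrel$ and on newest vertex bisection (the trivial case $\eta_\bullet(u_\bullet)=0$ is handled by $\TT_\circ:=\TT_\bullet$, $\RR_{\bullet\circ}:=\emptyset$). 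The only genuinely delicate point is the constant bookkeeping leading to the sharp threshold $\theta_{\rm opt}=(1+\Cstab^2\Cdrel^2)^{-1}$: one must keep $\eta_\bullet(\RR_{\bullet\circ},u_\bullet)$ as an independent quantity (rather than crudely bounding it by $\eta_\bullet(u_\bullet)$) and choose the Young parameter $\delta$ and the fineness parameter $\epsilon$ in the right order; everything else follows the template of \cite[Proposition~4.12(ii), Lemma~4.14]{axioms}.
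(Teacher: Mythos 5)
Your proof is correct and follows essentially the same route as the paper, which does not give an independent argument but invokes exactly this standard chain — approximation-class choice of $\TT_\star$, NVB overlay, quasi-monotonicity~\eqref{eq:monotone}, then stability~\eqref{axiom:stability} plus discrete reliability~\eqref{eq:reliable2} to verify the D\"orfler property (the ``optimality of D\"orfler marking'' of \cite[Proposition~4.12(ii), Lemma~4.14]{axioms}, cf.~\cite{bhp17}). Your bookkeeping (keeping $\eta_\bullet(\RR_{\bullet\circ},u_\bullet)$ separate and fixing the Young parameter before the fineness parameter $\epsilon$) correctly recovers the sharp threshold $\theta_{\rm opt}=(1+\Cstab^2\Cdrel^2)^{-1}$, and the cardinality bound via $\#\RR_{\bullet\circ}\le\Cdrel'\,\#(\TT_\bullet\backslash\TT_\circ)$ and the overlay estimate matches the intended argument.
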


\begin{proof}[Proof of Theorem~\ref{theorem:rates}]
The implication ``$\Longleftarrow$'' follows by definition of the approximation class (cf.~\cite[Proposition~4.15]{axioms}). Therefore, we may focus on the converse implication ``$\Longrightarrow$''. 

Without loss of generality, we may assume $\eta_\ell(u_\ell)>0$ for all $\ell\in\N_0$, since otherwise there exists an index $\ell_1\in\N_0$ with $\MM_\ell=\emptyset$ and the algorithm stagnates with $\TT_\ell = \TT_{\ell_1}$ and $u=u_\ell$ for all $\ell\ge\ell_1$. In particular, it holds that $\MM_\ell\neq\emptyset$ for all $\ell\in\N_0$.
Adopt the notation of Lemma~\ref{lemma:comparison}. 
For $\TT_\bullet = \TT_\ell$, Lemma~\ref{lemma:comparison} provides a set $\RR_\ell \subseteq\TT_\ell$ with $\#\RR_\ell \lesssim \norm{u}{\A_s}^{1/s}\,\eta_\ell(u_\ell)^{-1/s}$  and $\theta\,\eta_\ell(u_\ell)^2 \le \eta_\ell(\RR_\ell,u_\ell)^2$.  According to step~(iii)--(iv) of Algorithm~\ref{algorithm}, we infer
\begin{align*}
 \#\MM_\ell \lesssim \#\RR_\ell \lesssim \norm{u}{\A_s}^{1/s}\,\eta_\ell(u_\ell)^{-1/s}
 \quad\text{for all }\ell\ge\ell_0.
\end{align*}
Linear convergence~\eqref{eq:convergence} yields that $\eta_\ell(u_\ell) \lesssim \qlin^{\ell-j}\,\eta_j(u_j)$.
Hence, it follows that
\begin{align*}
 \eta_j(u_j)^{-1/s} \lesssim \qlin^{(\ell-j)/s}\,\eta_\ell(u_\ell)^{-1/s}
 \quad\text{for all }\ell_0 \le j \le \ell.
\end{align*}
Newest vertex bisection guarantees the mesh-closure estimate
\begin{align}\label{nvb:mesh_closure}
\#\TT_\ell- \#\TT_0 + 1 
 \le \Ccls \sum_{j=0}^{\ell-1}\#\MM_j,
\end{align}
where the hidden constant depends only on $\#\TT_0$; see~\cite{bdd04,stevenson:nvb,kpp13}. Note that
\begin{align*}
\sum_{j=0}^{\ell-1}\#\MM_j
 = \sum_{j=0}^{\ell_0-1}\#\MM_j + \sum_{j=\ell_0}^{\ell-1}\#\MM_j
 &\le \big(\ell_0\,C(\ell_0) +1 \big)\sum_{j=\ell_0}^{\ell-1}\#\MM_j\\
 &\text{with }
C(\ell_0) := \max_{j=0,\dots,\ell_0-1} \frac{\#\MM_j}{\#\MM_{\ell_0}}.
\end{align*}
Together with the geometric series for $0<\qlin^{1/s}<1$, this yields that
\begin{align*}
 \#\TT_\ell- \#\TT_0 + 1 
 \lesssim \sum_{j=\ell_0}^{\ell-1}\#\MM_j
 \lesssim \norm{u}{\A_s}^{1/s} \sum_{j=\ell_0}^{\ell-1} \eta_j(u_j)^{-1/s}
 \lesssim \norm{u}{\A_s}^{1/s} \eta_\ell(u_\ell)^{-1/s}.
\end{align*}
Altogether, we thus see that
\begin{align*}
\eta_\ell(u_\ell) 
\le \Copt\,\norm{u}{\A_s} (\#\TT_\ell-\#\TT_0 + 1)^{-s}
\quad\text{for all }\ell\ge\ell_0.
\end{align*} 
Tracking the constants in the above estimates, we reveal that
\begin{align*}
 \Copt = \Clin \bigg(\frac{2\,\Cmark\,C\,\Ccls\,\big(\ell_0\,C(\ell_0)+1\big)}{1-\qlin^{1/s}}\bigg)^s,
\end{align*}
where $C>0$ is the constant of Lemma~\ref{lemma:comparison}. This concludes the proof.
\end{proof}

\section{Numerical experiments}
It is a non trivial task to find suitable numerical test examples to test
a FEM discretization with a SUPG stabilization. 
Proposals for examples with known analytical solutions
do not characterize solutions of convection-dominated problems~\cite{Augustin:2011-1}, 
and
even the standard FEM provides reasonably good solutions.
A typical solution for such problems possess layers, which can be forced
through changes in the boundary condition. However, such solutions can be 
generated only with an unknown analytical
expression of the solution. 
Nevertheless, we consider two test examples in 2D, where the analytical solution 
is known, to confirm Theorem~\ref{theorem:convergence} and 
Theorem~\ref{theorem:rates}. For the third example, we do not know 
the analytical solution.
All example are convection-dominated. Thus, we apply the SUPG FEM~\eqref{eq:supg}
with the stabilization parameter
\begin{align*}
 \vartheta_T &:= \begin{cases}
   \frac{h_T}{p\norm{\alpha}{L^{\infty}(\Omega)}}
   \quad&\text{if }\Pe_T > 1\quad\text{(convection-dominated case)},\\
  \frac{h_T^2}{2\eps p^2}\quad&\text{if }\Pe_T \le 1\quad\text{(diffusion-dominated case)}.
 \end{cases}
\end{align*}
Here, we choose $p=1$ for $\PP^1$-SUPG FEM and $p=2$ for $\PP^2$-SUPG FEM.

Note that our Algorithm~\ref{algorithm} is not a classical refinement 
algorithm known from the literature. As a refinement parameter
we use $\theta=1$ for uniform mesh-refinement (uni.) and $\theta=0.5$ for
adaptive mesh-refinement (ada.). We choose newest vertex bisection
with three bisections for refinement.
In the convergence plots the convergence order $\OO(N^{-s})$, $s>0$, 
corresponds to $\OO(h^{2s})$ for uniform mesh-refinement, where
$N$ is the number of elements of $\TT_\bullet$ and
$h:=\max_{T\in\TT_\bullet}h_T$ is the maximal mesh size.  
Therefore, the maximal possible convergence order for the
error in the energy norm $\enorm{u-u_\bullet}$ and 
the SUPG norm $\enorm{u-u_\bullet}_{\bullet,supg}$ are
$\OO(N^{-1/2})$ for $\PP^1$-SUPG FEM and $\OO(N^{-1})$ for $\PP^2$-SUPG FEM.

\begin{figure}
\begin{center}
	\subfigure[\label{subfig:bsp1P1error}$\PP^1$-SUPG FEM.]
	{\includegraphics[width=0.45\textwidth]
	{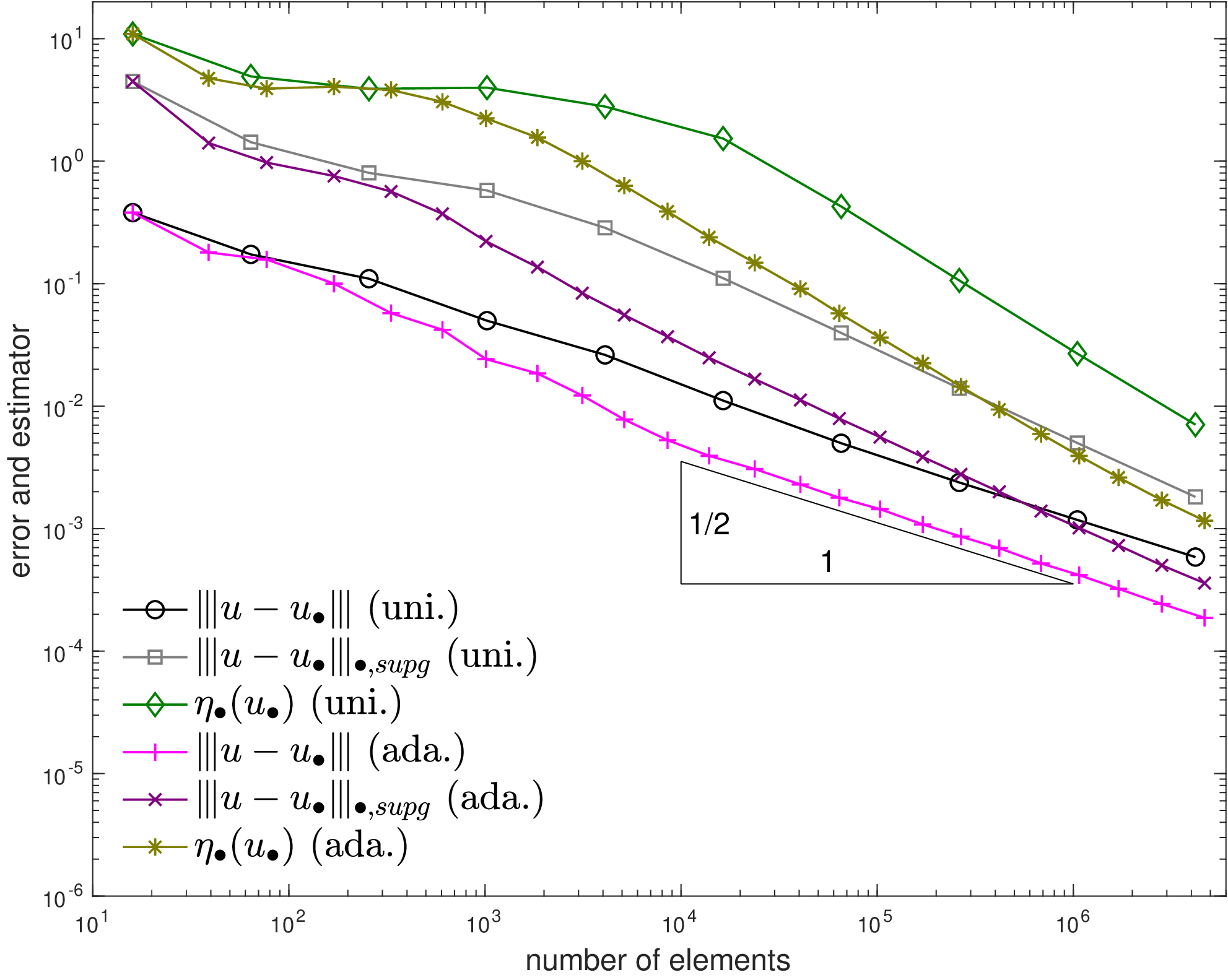}}
 \hspace{0.05\textwidth}
	\subfigure[\label{subfig:bsp1P2error}$\PP^2$-SUPG FEM.]
	{\includegraphics[width=0.45\textwidth]
	{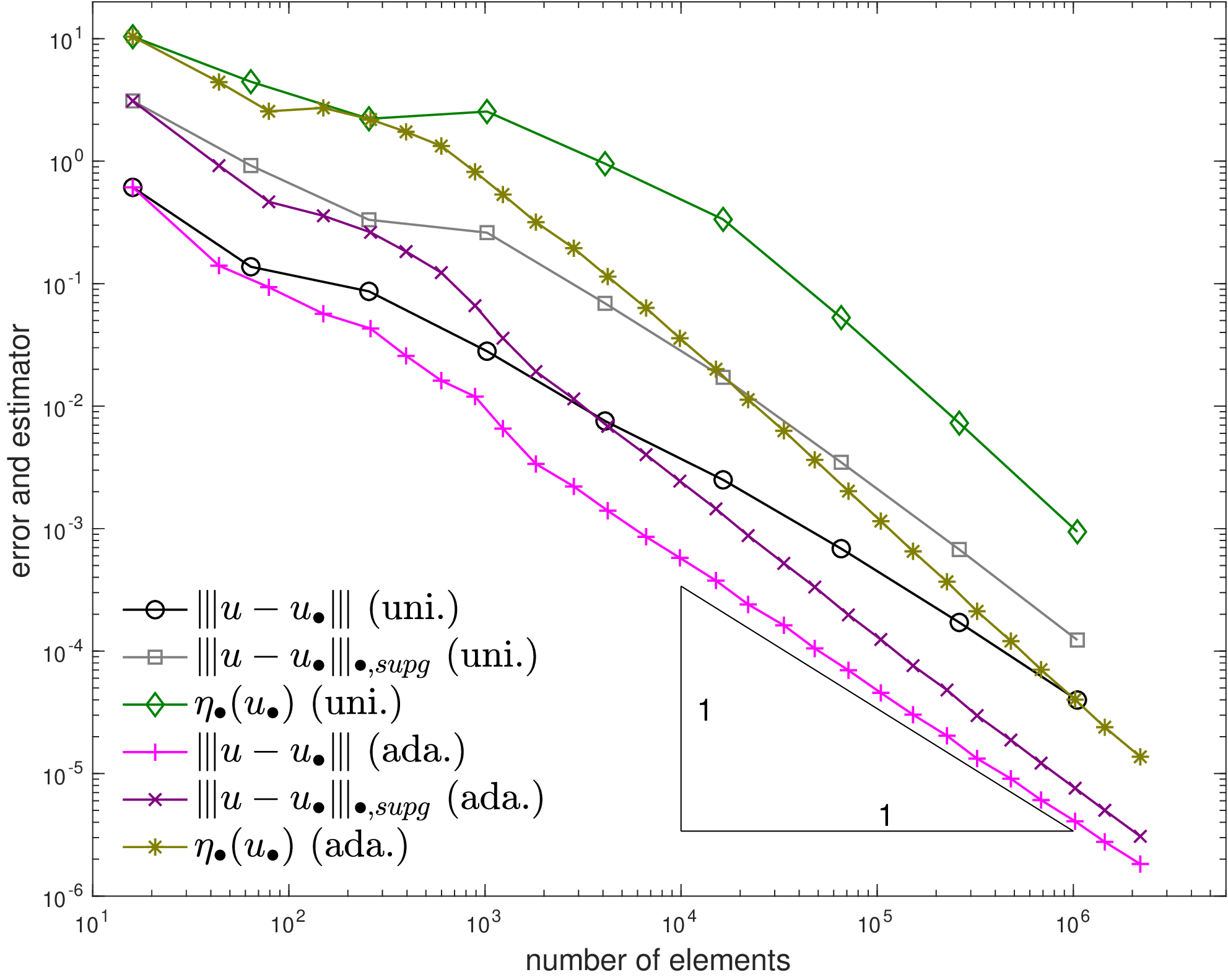}}
\end{center}
\caption{\label{fig:bsp1}%
Experiment with known smooth solution from Section~\ref{ex:bsp1}.
}
\end{figure}
\subsection{Experiment with known smooth solution}
\label{ex:bsp1}
In this example, taken from~\cite{John:2013-1}, we want to prescribe
a solution on $\Omega=(0,1)^2$ with a circular interior layer, i.e., 
\begin{align*}
 u(x_1,x_2)=16\,x_1\,(1-x_1)\,x_2\,(1-x_2)
 \bigg(\frac{1}{2}+\textstyle\frac{\text{\normalsize$\arctan$}
 \big[2\eps^{-1/2}(0.25^2-(x_1-0.5)^2-(x_2-0.5)^2)\big]}{\displaystyle\pi}\bigg).
\end{align*}
For the model data, we choose $\eps=10^{-4}$, $\a=(2,3)^T$, and $\b=2$.
The whole boundary $\partial \Omega$ is of Dirichlet type, i.e., $\partial \Omega=\Gamma_D$,
and for this example homogeneous.
We calculate the right-hand side $f$ appropriately. The uniform initial mesh $\TT_0$
consists of $16$ congruent triangles.
Since $u$ is smooth, we see
in Figure~\ref{fig:bsp1} that both, uniform and adaptive mesh-refinement,
lead to optimal convergence behavior in the asymptotic, i.e.,
$\OO(N^{-1/2})$  for $\PP^1$-SUPG FEM and $\OO(N^{-1})$ 
for $\PP^2$-SUPG FEM.
However, the absolute values of the errors for adaptive refinement
are smaller because the steep
circular interior layer is refined much faster. The influence of
the P\'eclet number on the efficiency constant
$\Ceff$ in~\eqref{eq:efficient} decreases faster.
We also refer to the discussion about semi-robustness and robustness
of the {\sl a~posteriori} estimate in Remark~\ref{rem:robustness}.

\begin{figure}
\begin{center}
	\subfigure[\label{subfig:bsp2P1error}$\PP^1$-SUPG FEM.]
	{\includegraphics[width=0.45\textwidth]
	{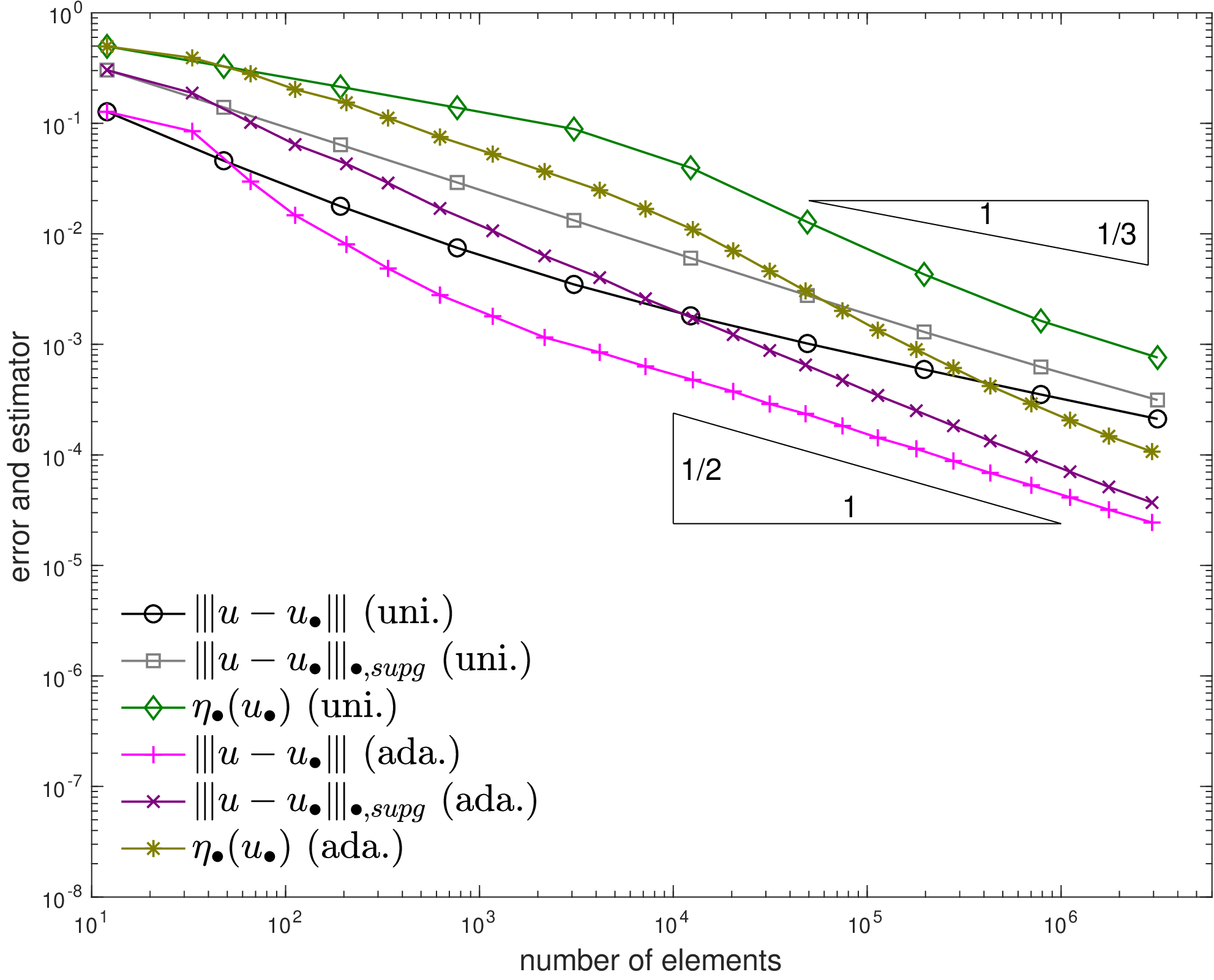}}
 \hspace{0.05\textwidth}
	\subfigure[\label{subfig:bsp2P2error}$\PP^2$-SUPG FEM.]
	{\includegraphics[width=0.45\textwidth]
	{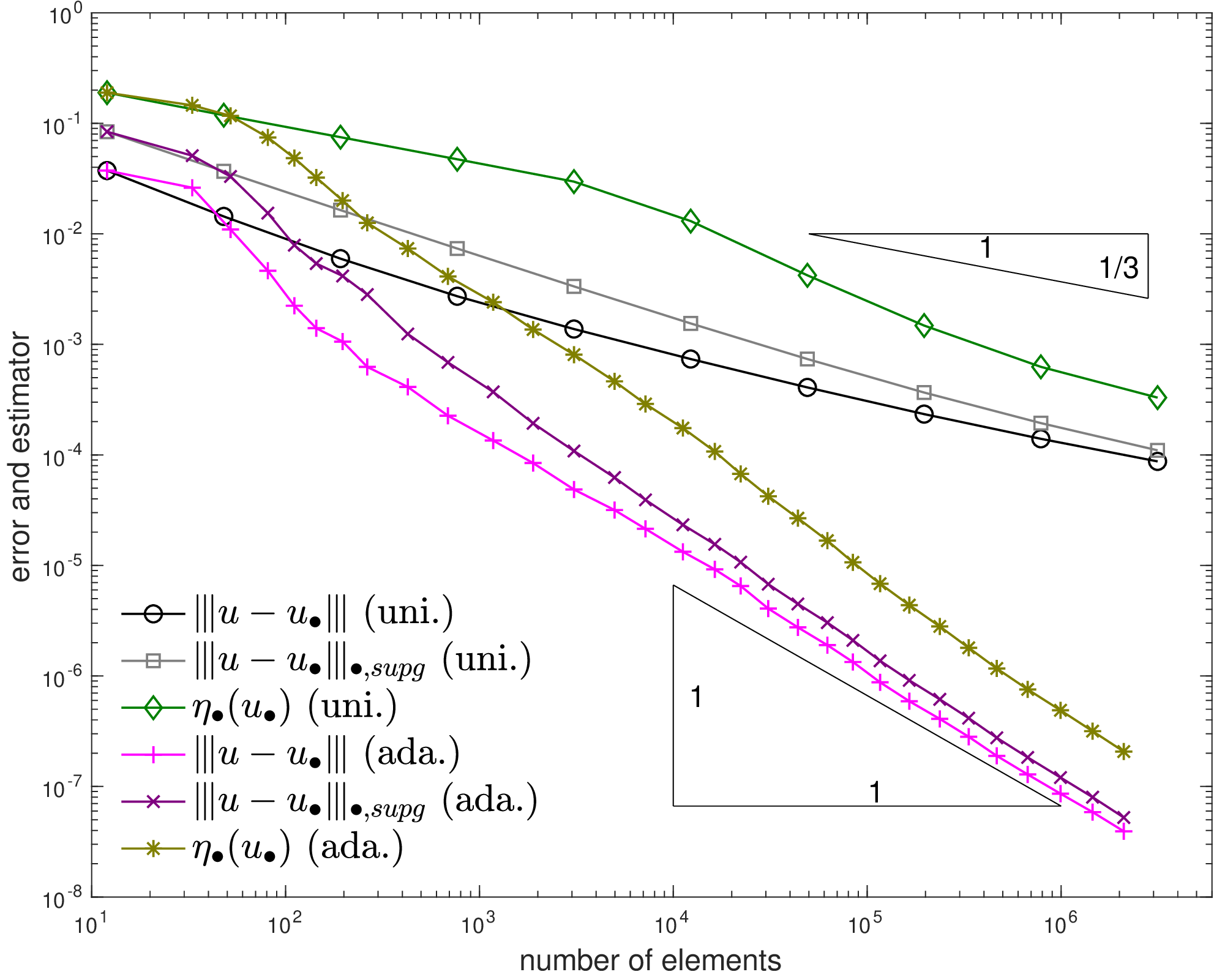}}
\end{center}
\caption{\label{fig:bsp2error}%
Experiment with known singular solution from Section~\ref{ex:bsp2}.}
\end{figure}
\subsection{Experiment with known solution with a generic singularity at the reentrant corner}
\label{ex:bsp2}
We consider a classical L-shaped domain $\Omega=(-1,1)^2\backslash([0,1]\times [-1,0])$.
The exact solution 
$u(x_1,x_2) = r^{2/3}\sin(2\varphi/3)$ 
in polar coordinates $r\in\R_0^+$, $\varphi\in[0,2\pi[$,
and $(x_1,x_2) = r(\cos\varphi,\sin\varphi)$ on $\Omega$ is used as 
a classical example to test the performance
of an adaptive algorithm.
It is well known that $u$ has a generic singularity at the reentrant corner $(0,0)$, which
leads to $u\in H^{1+2/3-\varepsilon}(\Omega)$ for all $\varepsilon>0$.
For the model data we choose $\eps=10^{-3}$, $\a=(2,3)^T$, and $\b=2$.
We allow mixed boundary condition with Neumann boundary on $[-1,1]\times 1$ and
$1\times[0,1]$. 
Note that in extension of our model problem~\eqref{eq:strongform} 
and our theory the Dirichlet boundary is inhomogeneous to guarantee that
$\Gamma_D$ contains the inflow boundary.
We calculate the right-hand side $f$ appropriately.
The uniform initial mesh $\TT_0$
consists of $12$ congruent triangles.
In Figure~\ref{fig:bsp2error}, we see the convergence rates.
For uniform mesh-refinement, we get a suboptimal convergence rate
of $\OO(N^{-1/3})$ in the asymptotic as predicted by approximation theory.
Our Algorithm~\ref{algorithm} recovers the optimal convergence rates for $\PP^1$-SUPG FEM
and $\PP^2$-SUPG FEM.

\begin{figure}
\begin{center}
	\subfigure[\label{subfig:bsp3P1error}$\PP^1$-SUPG FEM.]
	{\includegraphics[width=0.45\textwidth]
	{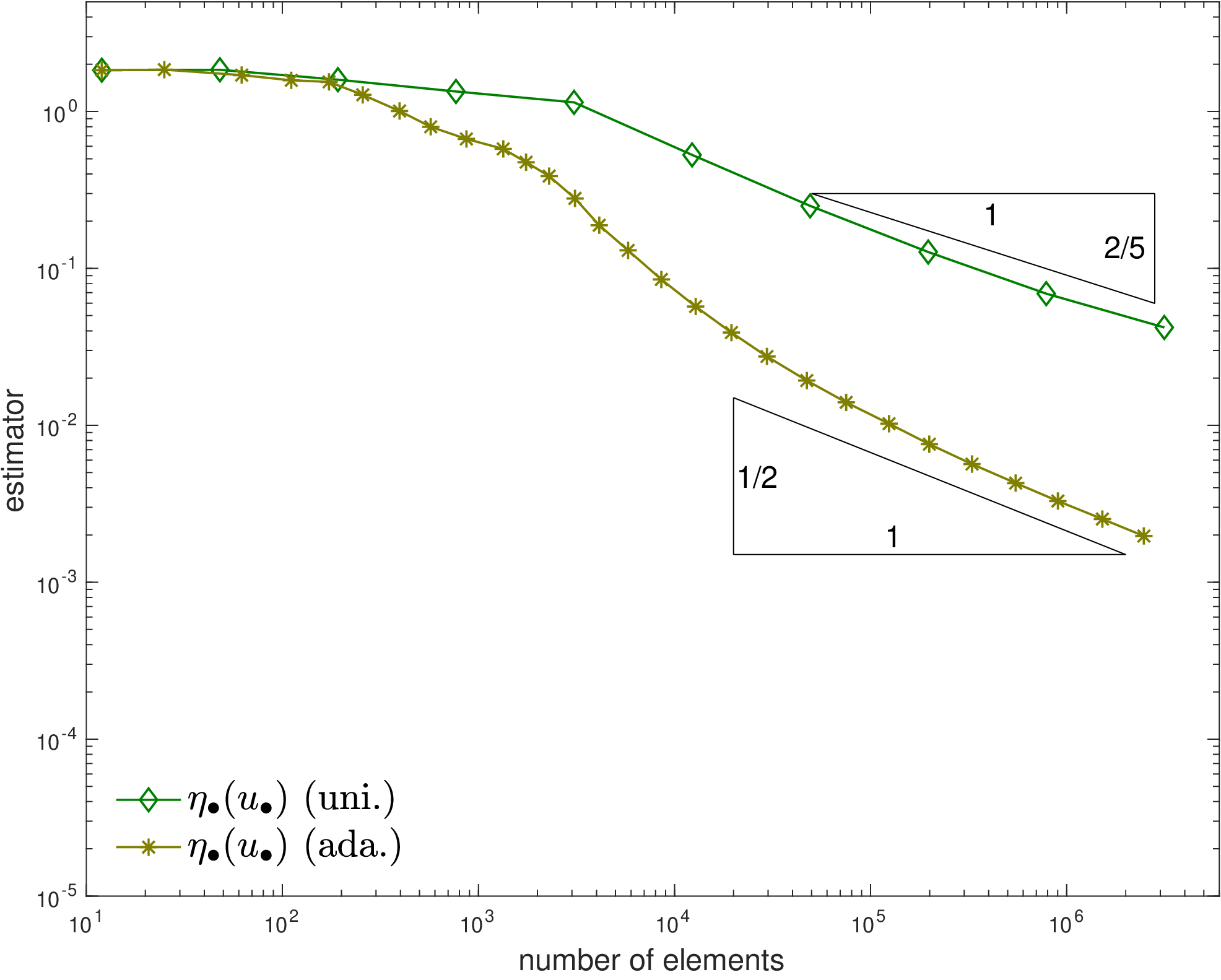}}
 \hspace{0.05\textwidth}
	\subfigure[\label{subfig:bsp3P2error}$\PP^2$-SUPG FEM.]
	{\includegraphics[width=0.45\textwidth]
	{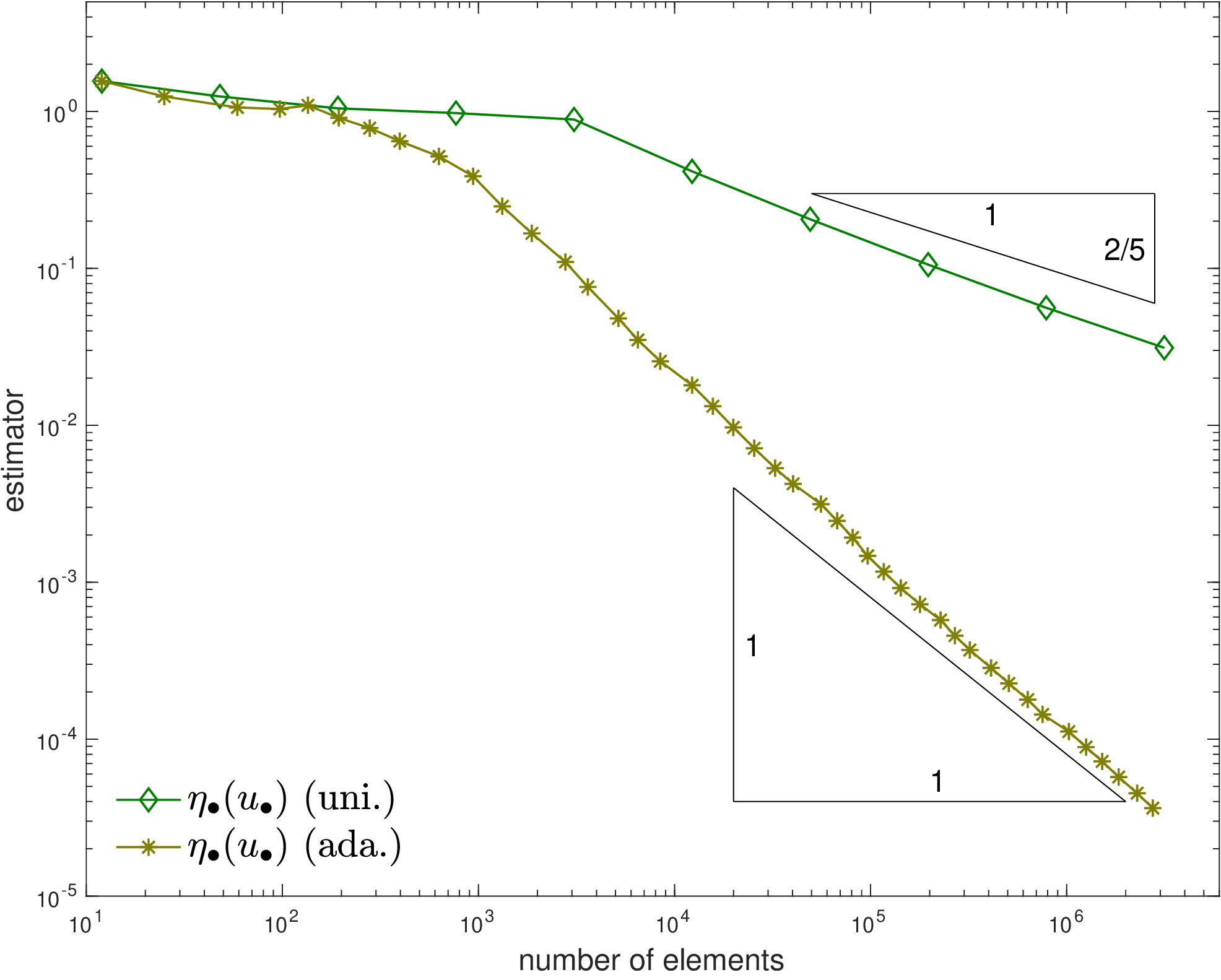}}
\end{center}
\caption{\label{fig:bsp3error}%
Experiment with unknown solution and a reentrant corner in $(0,0)$ 
from Section~\ref{ex:bsp3}.}
\end{figure}
\begin{figure}
\begin{center}
	\subfigure[\label{subfig:bsp3P1mesh}$\TT_{14}$ ($5800$ elements), $\PP^1$-SUPG FEM.]
	{\includegraphics[width=0.425\textwidth]
	{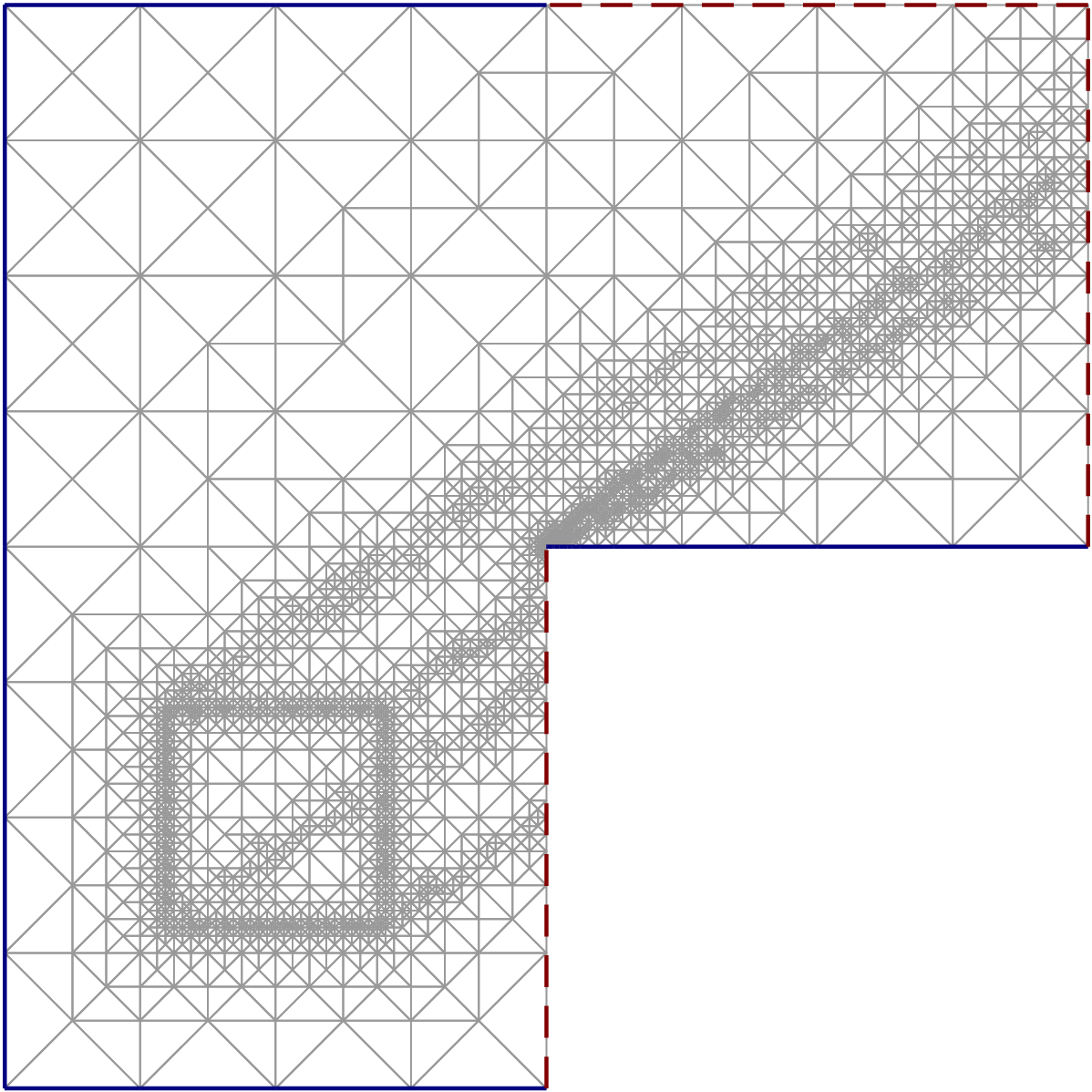}}
 \hspace{0.075\textwidth}
	\subfigure[\label{subfig:bsp3P2mesh}$\TT_{14}$ ($5176$ elements), $\PP^2$-SUPG FEM.]
	{\includegraphics[width=0.425\textwidth]
	{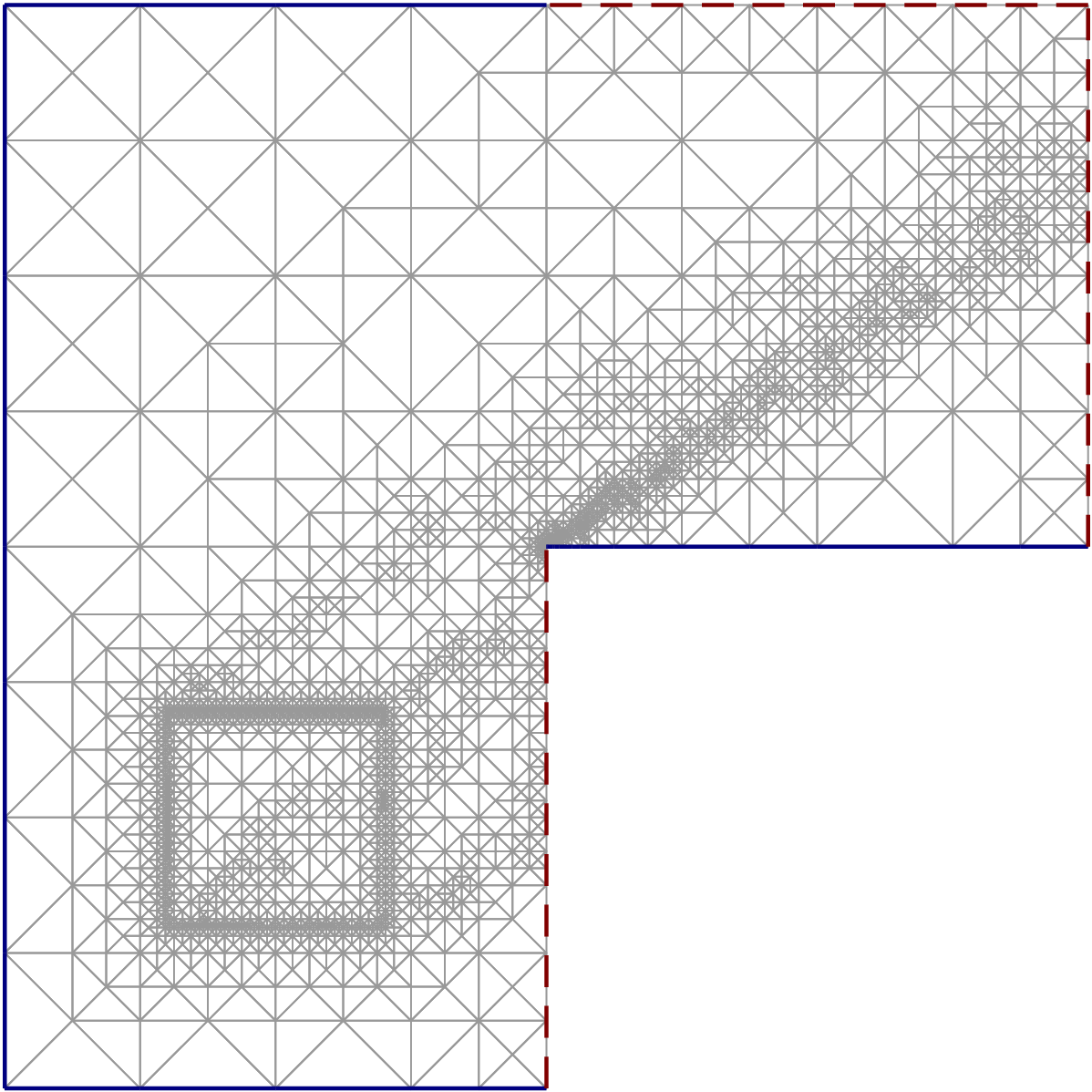}}\\[1em]
	\subfigure[\label{subfig:bsp3P1sol}Solution on $\TT_{14}$, $\PP^1$-SUPG FEM.]
	{\includegraphics[width=0.45\textwidth]
	{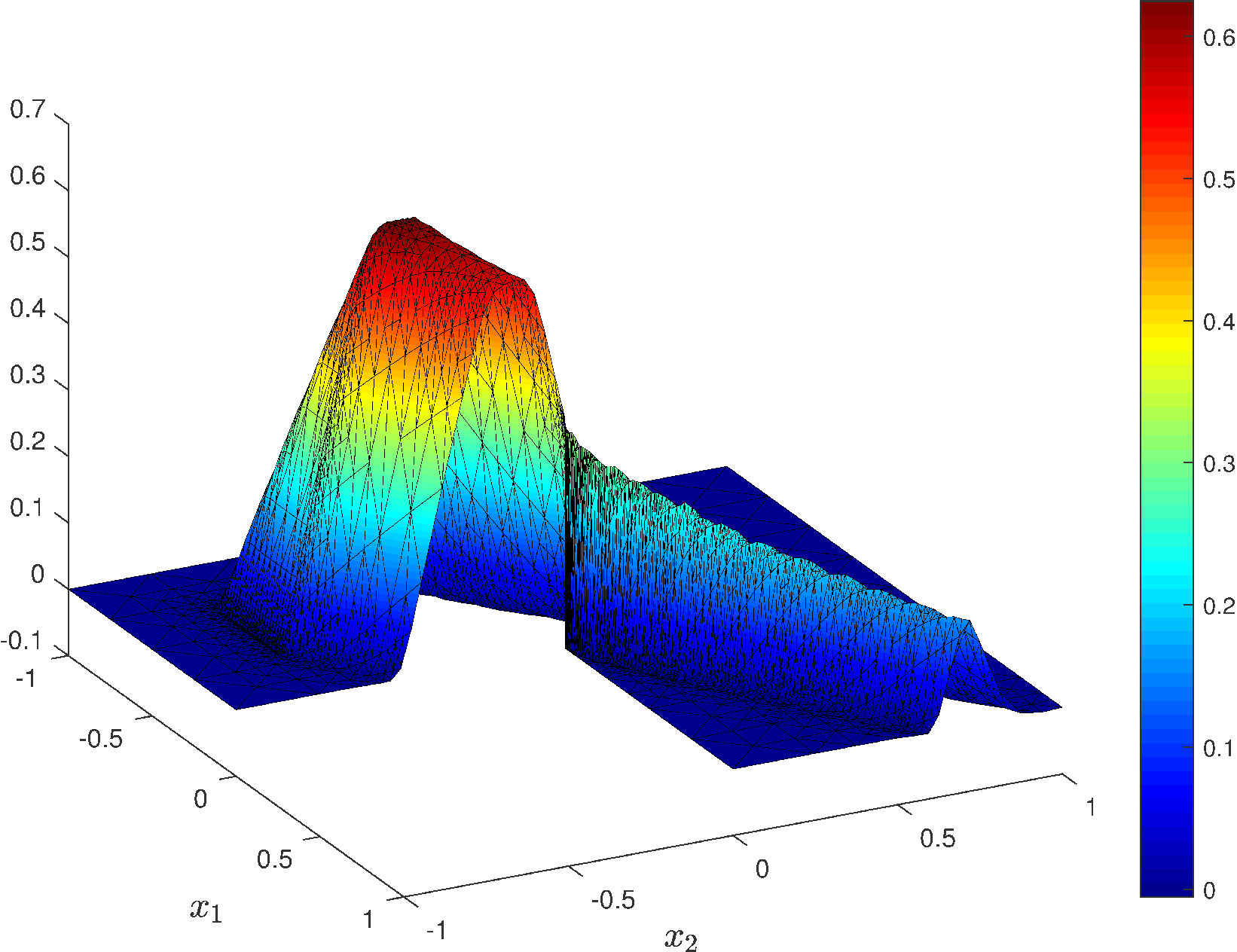}}
 \hspace{0.05\textwidth}
	\subfigure[\label{subfig:bsp3P2sol}Solution on $\TT_{14}$, $\PP^2$-SUPG FEM.]
	{\includegraphics[width=0.45\textwidth]
	{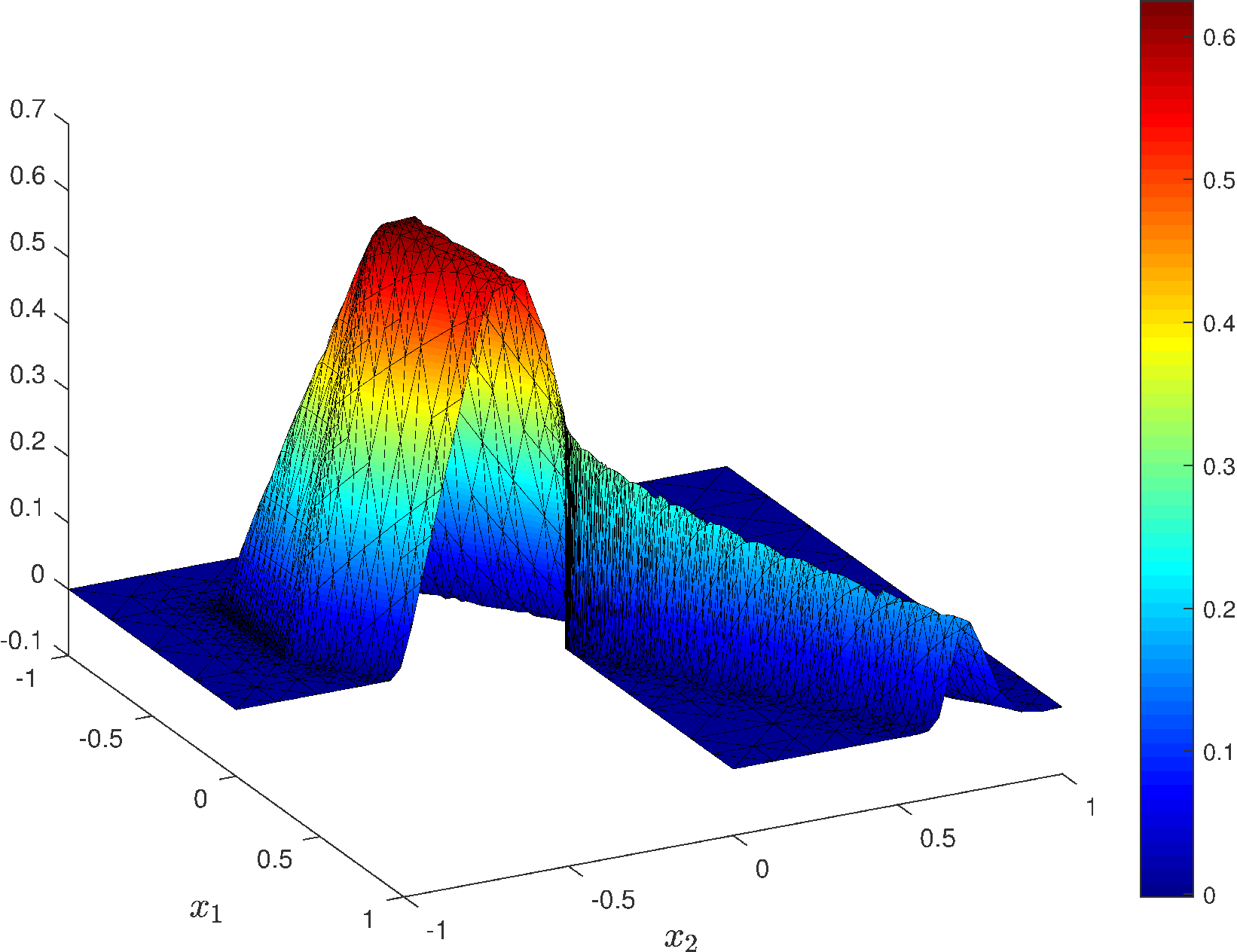}}
\end{center}
\caption{\label{fig:bsp3solmesh}%
Adaptive mesh (above) and solution for the experiment 
from Section~\ref{ex:bsp3} after $14$ refinements. 
The dashed lines represent the outflow Neumann boundary.  
}
\end{figure}

\subsection{A more practical example}
\label{ex:bsp3}
We consider the L-shaped domain from section~\ref{ex:bsp2}.
Furthermore, the model data are $\eps=10^{-3}$, $\a=(3,2)^T$, and $\b=1$. 
For the right-hand side,
we set $f=5$ in the square $[-0.7,-0.3]^2$, and $f=0$ otherwise.
The mixed boundary condition is prescribed as follows; see
also Figure~\ref{subfig:bsp3P1mesh} and Figure~\ref{subfig:bsp3P2mesh}:
we set the Neumann data $g=10^{-3}$ on the lines $0\times[-1,0]$, $1\times[0,1]$, and $[0,1]\times 1$.
The rest of the boundary is a homogenous Dirichlet boundary.
Note that this model setting fulfills exactly the requirements of the
model problem~\eqref{eq:strongform}.
Since the analytical solution is unknown we
plot only the estimator in Figure~\ref{fig:bsp3error}. Furthermore, we remind that the
estimator is reliable and efficient. Thus, we observe in fact 
the convergence rate of our numerical solution by the asymptotics
of the estimator. 
Applying the adaptive Algorithm~\ref{algorithm}, we also know that
our numerical solution converges with the best possible rate due 
to Theorem~\ref{theorem:convergence} and Theorem~\ref{theorem:rates}.
As in the example
in section~\ref{ex:bsp2}, uniform mesh-refinement leads to suboptimal convergence rates,
whereas our adaptive Algorithm~\ref{algorithm} reproduces the optimal convergence
rates for $\PP^1$-SUPG FEM as well as for $\PP^2$-SUPG FEM.  
In Figure~\ref{fig:bsp3solmesh}, we see an adaptively generated mesh $\TT_{14}$
and the solution. The mesh-refinement is mainly around the source,
from the source in convection direction, and at the reentrant corner $(0,0)$.

\section{Conclusions}
The numerical method as well as the corresponding {\sl a~posteriori} estimators
to approximate
convection-dominated convection-diffusion problems 
must be chosen very carefully.
In this work, we showed asymptotically the optimal convergence of an adaptive
finite element algorithm with SUPG stabilization.
Since the numerical scheme does not have a classical Galerkin orthogonality, some new
ideas were implemented. 

\bigskip

\textbf{Acknowledgement.}\quad
The second author is supported by the Austrian Science Fund (FWF) 
through the research project \textit{Optimal adaptivity for BEM and FEM-BEM coupling} 
(grant P27005) and the special research 
program \textit{Taming complexity in PDE systems} (grant SFB F65).

\bibliographystyle{alpha}
\bibliography{literature}

\end{document}